\newtheorem{theorem} {{\textsf{Theorem}}}
\newtheorem{proposition}[theorem]{{\textsf{Proposition}}}
\newtheorem{conj}[theorem]{{\textsf{Conjecture}}}
\newtheorem{remark}[theorem]{{\textsf{Remark}}}
\newtheorem{lemma}[theorem]{{\textsf{Lemma}}}
\def\Z{{\mathbb{Z}^{n}_2}}
\begin{document}

\title{Regular genus of $\mathbb{S}^2 \times \mathbb{S}^1 \times \mathbb{S}^1$, $4$-torus, and small covers over $\Delta^2 \times \Delta^2$}

\author{Anshu Agarwal and Biplab Basak$^1$}
	
\date{June 2, 2025}
	
\maketitle
	
\vspace{-10mm}
\begin{center}
		
\noindent {\small Department of Mathematics, Indian Institute of Technology Delhi, New Delhi, 110016, India.$^2$}

\footnotetext[1]{Corresponding author}
		
\footnotetext[2]{{\em E-mail addresses:} \url{maz228084@maths.iitd.ac.in} (A. Agarwal), \url{biplab@iitd.ac.in} (B. Basak).}
		
\medskip
	
\end{center}
	
\hrule
	
\begin{abstract}

A crystallization of a PL manifold is an edge-colored graph encoding a contracted triangulation of the manifold. The concept of regular genus generalizes the notions of surface genus and Heegaard genus for 3-manifolds to higher-dimensional closed PL manifolds. The regular genus of a PL manifold is a PL invariant. Determining the regular genus of a closed PL $n$-manifold remains a fundamental challenge in combinatorial topology. In this article, we first resolve a conjecture by proving that the regular genus of $\mathbb{S}^2 \times \mathbb{S}^1 \times \mathbb{S}^1$ is 6. Additionally, we determine that the regular genus of $\mathbb{S}^1 \times \mathbb{S}^1 \times \mathbb{S}^1 \times \mathbb{S}^1$ is 16. We also present some observations related to the regular genus of the $n$-dimensional torus and conjecture that the regular genus of $\mathbb{S}^1 \times \mathbb{S}^1 \times \cdots \times \mathbb{S}^1$ ($n$ times) is $1+\frac{(n+1)! \ (n-3)}{8}$, for $n\ge 5$. Then, we investigate the regular genus of small covers. Small covers are closed $n$-manifolds admitting a locally standard $\mathbb{Z}_2^n$-action with orbit space homeomorphic to a simple convex polytope $P^n$. For the polytope $P = \Delta^2 \times \Delta^2$, we classify all the small covers up to Davis-Januszkiewicz (D-J) equivalence and show that there are exactly seven such covers. Among these, one is $\mathbb{RP}^2 \times \mathbb{RP}^2$, while the others are $\mathbb{RP}^2$-bundles over $\mathbb{RP}^2$. Remarkably, each of these seven small covers has the regular genus 8. Results in this article provide explicit regular genus values for several important 4-manifolds, offering new insights and tools for future work in combinatorial topology. 
\end{abstract}

\noindent {\small {\em MSC 2020\,:} Primary 57Q15; Secondary 57S25, 52B11, 52B70, 05C15.
		
\noindent {\em Keywords:} $\mathbb{Z}_2^n$-action, Small cover, D-J equivalence,  Polytope, Crystallization, Regular genus.}
	
\medskip

\section{Introduction}
It is well-known that every closed PL $n$-manifold $M$ admits a colored triangulation using exactly $n+1$ vertices (colors). A crystallization of $M$ is defined as an $(n+1)$-regular edge-colored graph that serves as the dual graph of such a triangulation. Each crystallization can be regularly embedded into a surface, and the regular genus of a crystallization is the minimal genus (respectively, half the genus) of an orientable (respectively, non-orientable) surface into which the graph embeds regularly. The regular genus of the manifold $M$ is then defined as the minimum regular genus among all its crystallizations. The concept of regular genus generalizes the notions of surface genus and Heegaard genus for 3-manifolds to higher-dimensional closed PL manifolds \cite{g81}.

In \cite{fg82}, Gagliardi proved that a closed connected PL $n$-manifold $M$ has regular genus zero if and only if $ M$ is PL homeomorphic to the $n$-sphere. In particular, $\mathcal{G}(\mathbb{S}^4) = 0 $. From this result, it follows that $\mathcal{G}(\mathbb{S}^n \times \mathbb{S}^1) \geq 1$. A construction of a crystallization of $\mathbb{S}^n \times \mathbb{S}^1$ with $ 2(n+2) $ vertices and regular genus one shows that $\mathcal{G}(\mathbb{S}^n \times \mathbb{S}^1) = 1$ \cite{ca89, fgg86}. Furthermore, it has been classified that a closed connected orientable prime PL $n$-manifold $M$ has regular genus one if and only if $M$ is PL homeomorphic to $\mathbb{S}^{n-1} \times \mathbb{S}^1$ \cite{ca89, ch93, fgg86}. 

For higher-dimensional PL $n$-manifolds, the theory of regular genus is less developed. There have been several studies aimed at computing the regular genus of PL $4$-manifolds and classifying them accordingly. It was proved in \cite{bj16} that the regular genus of the $K3$ surface is $23$. As of now, the only known simply connected prime closed PL 4-manifolds are $\mathbb{S}^4 $, $\mathbb{CP}^2 $, $\mathbb{S}^2 \times \mathbb{S}^2 $, the $K3$ surface and the regular genus of these manifolds is known. From \cite{ab25, cs07}, we know that $\mathcal{G}(\mathbb{RP}^n) = 1 + (n-3)2^{n-3} $. In \cite{bc17}, it was shown that $\mathcal{G}(\mathbb{RP}^2 \times \mathbb{S}^2) = 5 $.  

In \cite{ca99, g89}, it was proved that a closed connected orientable prime PL 4-manifold $M$ has regular genus two if and only if $M$ is PL homeomorphic to $\mathbb{CP}^2$. There exists no closed connected orientable prime PL 4-manifold with regular genus three. It is known that $\mathcal{G}(\mathbb{S}^2 \times \mathbb{S}^2) = 4 $, and this is the only such $4$-manifold with regular genus four. There is no closed connected orientable prime PL $4$-manifold with regular genus five. The classification of such manifolds up to regular genus five can be found in \cite{B19, ca89, ch93, fgg86, s99}. 

The classification beyond regular genus five remains open. In \cite{s99}, it was shown that $\mathcal{G}(\mathbb{RP}^3 \times \mathbb{S}^1) = 6 $ and it was conjectured that regular genus six characterizes $\mathbb{RP}^3 \times \mathbb{S}^1$ among all closed connected orientable prime PL 4-manifolds. However, this conjecture was disproved in \cite{B19} by constructing certain $4$-dimensional mapping tori—manifolds that are not even topologically homeomorphic to $\mathbb{RP}^3 \times \mathbb{S}^1$—with regular genus six. It was further conjectured in \cite{B19} that the regular genus of $\mathbb{S}^2 \times \mathbb{S}^1 \times \mathbb{S}^1$ is six. In this article, we succeed in proving this conjecture by showing that (cf. Theorem \ref{theorem:G1}).
$$
\mathcal{G}(\mathbb{S}^2 \times \mathbb{S}^1 \times \mathbb{S}^1) = 6.
$$ 

The genus of the 2-dimesional torus $\mathbb{S}^1 \times \mathbb{S}^1$ is $1$, and the genus of the 3-dimesional torus $\mathbb{S}^1 \times \mathbb{S}^1 \times \mathbb{S}^1$ is $3$. A natural question that arises is: what is the regular genus of the 4-dimesional torus $\mathbb{S}^1 \times \mathbb{S}^1 \times \mathbb{S}^1 \times \mathbb{S}^1$? It was previously known that  
$$
4 \leq \mathcal{G}(\mathbb{S}^1 \times \mathbb{S}^1 \times \mathbb{S}^1 \times \mathbb{S}^1) \leq 28
$$
(cf. \cite{fg82p, gg93}), though the gap between these bounds remained quite large.  In \cite{bc17}, the known lower bound was improved from $4$ to $16$; however, the precise value had not been established. Determining the exact regular genus of the 4-dimesional torus has remained an active and unresolved problem. In this article, we settle this question by proving (cf. Theorem \ref{theorem:G2}) that  
$$
\mathcal{G}(\mathbb{S}^1 \times \mathbb{S}^1 \times \mathbb{S}^1 \times \mathbb{S}^1) = 16.
$$
We also make several observations regarding the regular genus of the $n$-dimensional torus. These findings lead us to propose the conjecture that the regular genus of the product $\mathbb{S}^1 \times \mathbb{S}^1 \times \cdots \times \mathbb{S}^1$ ($n$ times) is $1+\frac{(n+1)! \  (n-3)}{8}$ (cf. Conjecture \ref{conj}). 

Furthermore, we investigate the regular genus of small covers, a class of closed manifolds introduced by Davis and Januszkiewicz \cite{dj91}. A small cover over a simple convex $n$-polytope $P^n$ is a closed $n$-manifold equipped with a locally standard $\mathbb{Z}_2^n$-action such that the orbit space is $P^n$. These manifolds serve as the real analogues of quasitoric manifolds and form an important class in toric topology. In particular, we focus on the case where the polytope is the product $P = \Delta^2 \times \Delta^2$, i.e., the Cartesian product of two 2-simplices. We classify all small covers over this polytope up to Davis-Januszkiewicz (D-J) equivalence and show that there are exactly seven such manifolds. Among them, one is the product manifold $\mathbb{RP}^2 \times \mathbb{RP}^2 $, while the remaining six are non-trivial $\mathbb{RP}^2 $-bundles over $\mathbb{RP}^2 $ (cf. Lemma \ref{lemma:seven}). Remarkably, we find that all these seven small covers have the same regular genus. Specifically, we prove that each of them has regular genus eight. In particular, we establish that (cf. Theorem \ref{thm:RP2}) $$
\mathcal{G}(\mathbb{RP}^2 \times \mathbb{RP}^2) = 8. $$

It is known that if the regular genus is additive over connected sum for simply-connected PL 4-manifolds, it would imply the 4-dimensional smooth Poincar\'{e} conjecture. In \cite{bb18}, a class of weak semi-simple crystallizations was introduced in dimension four. If the class of PL 4-manifolds admitting weak semi-simple crystallizations is large enough to include all simply-connected PL 4-manifolds, the conjecture would also be resolved. We observe that the crystallizations of $\mathbb{S}^2 \times \mathbb{S}^1 \times \mathbb{S}^1, \mathbb{S}^1 \times \mathbb{S}^1 \times \mathbb{S}^1 \times \mathbb{S}^1, \mathbb{RP}^2 \times \mathbb{RP}^2,$ and non-trivial $\mathbb{RP}^2$-bundles over $\mathbb{RP}^2$ constructed in this article are weak semi-simple. Since the class of PL $4$-manifolds admitting weak semi-simple crystallizations is closed under connected sum, and the regular genus is known for every manifold in this class, we now have a significantly large collection of PL $4$-manifolds for which the regular genus is completely determined. This class now includes
$ \mathbb{S}^4, \mathbb{CP}^2, \mathbb{S}^2 \times \mathbb{S}^2, \text{the K3 surface},  \mathbb{S}^3 \times \mathbb{S}^1,  \mathbb{RP}^4,  \mathbb{RP}^3 \times \mathbb{S}^1,  
\mathbb{RP}^2 \times \mathbb{S}^2,
\mathbb{S}^2 \times \mathbb{S}^1 \times \mathbb{S}^1, \mathbb{RP}^2 \times \mathbb{RP}^2,  \mathbb{S}^1 \times \mathbb{S}^1 \times \mathbb{S}^1 \times \mathbb{S}^1,
 $ certain 4-dimensional mapping tori, and all their connected sums, possibly with reversed orientation. This significantly expands the known class of PL $4$-manifolds with computable regular genus and provides a unifying framework for studying them via crystallization theory.

\section{Preliminaries}\label{pre}

\subsection{Crystallization} \label{crystal}

In this article, all the spaces and maps are considered in the PL category \cite{rs72}. Suppose that $K$ is a finite collection of closed balls and write $|K| = \bigcup_{B\in K} B $. Then $K$ is called a {\it simplicial cell complex} if the following conditions hold.

\begin{enumerate}[$(i)$]
\item $|K|=$ $\bigsqcup_{B\in K}$ int$(B)$,

\item if $A,B\in K$, then $A\cap B$ is a union of balls of $K$,

\item  for each $h$-ball $A\in K$, the poset $\{B\in K \, | \, B \subset A\}$, ordered by inclusion, is isomorphic with the lattice of all faces of the standard $h$-simplex.
\end{enumerate}
\noindent A {\it pseudo-triangulation} of a polyhedron $P$ is a pair $(K,f)$, where $K$ is a simplicial cell complex and $f: |K| \to P$ is a PL homeomorphism (see \cite{fgg86} for more details). A maximal dimensional closed ball of $K$ is called a {\it facet}. If all the facets of $K$ are of the same dimension, then $K$ is called a {\it pure} simplicial cell complex. 

The crystallization theory provides a tool for representing piecewise linear (PL) manifolds of any dimension combinatorially, using edge-colored graphs.
Throughout the article, by a graph, we mean a multigraph with no loops. Let $\Gamma = (V(\Gamma), E(\Gamma))$  be an edge-colored multigraph with no loops, where the edges are colored (or labeled) using $\Delta_n := \{0, 1, \dots, n\}$. The elements of the set $\Delta_n$ are referred to as the {\it colors} of $\Gamma$. The coloring of $\Gamma$ is called a \textit{proper edge-coloring} if any two adjacent edges in $\Gamma$ have different labels. In other words, for a proper edge-coloring, there exists a surjective map $\gamma: E(\Gamma) \to \Delta_n$ such that  $\gamma(e_1) \ne \gamma(e_2)$ for any two adjacent edges $e_1$ and $e_2$. We denote a properly edge-colored graph as $(\Gamma,\gamma)$, or simply as $\Gamma$ if the coloring is understood. If a graph $(\Gamma,\gamma)$ is such that the degree of each vertex in the graph is $n+1$, then it is said to be {\it $(n+1)$-regular}.
We refer to \cite{bm08} for standard terminologies on graphs.

An {\it $(n+1)$-regular colored graph} is a pair $(\Gamma,\gamma)$, where $\Gamma$ is $(n+1)$-regular and $\gamma$ is a proper edge-coloring of $\Gamma$. For each $\mathcal{C} \subseteq \Delta_n$ with cardinality $k$, the graph $\Gamma_\mathcal{C} = (V(\Gamma), \gamma^{-1}(\mathcal{C}))$ is a $k$-regular colored graph with edge-coloring $\gamma|_{\gamma^{-1}(\mathcal{C})}$. For a color set $\{j_1,j_2,\dots,j_k\} \subset \Delta_n$, $g(\Gamma_{\{j_1,j_2, \dots, j_k\}})$ or $g_{\{j_1, j_2, \dots, j_k\}}$ denotes the number of connected components of the graph $\Gamma_{\{j_1, j_2, \dots, j_k\}}$. A graph $(\Gamma,\gamma)$ is called {\it contracted} if the subgraph $\Gamma_{\hat{j}} = \Gamma_{\Delta_n\setminus \{j\}}$ is connected, i.e., $g_{\hat{j}}=1$, for $j \in \Delta_n$.
 
For an $(n+1)$-regular colored graph $(\Gamma,\gamma)$, a corresponding $n$-dimensional simplicial cell complex ${\mathcal K}(\Gamma)$ is constructed as follows:

\begin{itemize}
\item{} For each vertex $v\in V(\Gamma)$, take an $n$-simplex $\sigma(v)$ with vertices labeled by $\Delta_n$.

\item{} Corresponding to each edge of color $j$ between $v_1,v_2\in V(\Gamma)$, identify the ($n-1$)-faces of $\sigma(v_1)$ and $\sigma(v_2)$ opposite to the $j$-labeled vertices such that the vertices with the same labels coincide.
\end{itemize}

The simplicial cell complex $\mathcal{K}(\Gamma)$ is $(n+1)$-colorable, meaning its 1-skeleton can be properly vertex-colored using $\Delta_n$. If $ |\mathcal{K}(\Gamma)| $ is PL homeomorphic to an $ n $-manifold $ M $, then $(\Gamma, \gamma)$ is referred to as a \textit{gem (graph encoded manifold)} of $ M $, or $(\Gamma, \gamma)$ represents $ M $. In this context, $\mathcal{K} (\Gamma)$ is described as a \textit{colored triangulation} of $ M $. 
The {\it disjoint star} of $\sigma \in \mathcal{K}(\Gamma)$ is a simplicial cell complex that consists of all the $n$-simplices of $\mathcal{K}(\Gamma)$ that contain $\sigma$, with re-identification of only their $(n-1)$-faces containing $\sigma$, as in $\mathcal{K}(\Gamma)$. The {\it disjoint link} of $\sigma \in \mathcal{K}(\Gamma)$ is the subcomplex of its disjoint star generated by the simplices that do not intersect $\sigma$.

From the construction above, it can be easily seen that for any subset $\mathcal{C} \subset \Delta_n$ with cardinality $k+1$, $\mathcal{K}(\Gamma)$ has as many $ k $-simplices with vertices labeled by $\mathcal{C}$ as there are connected components of $\Gamma_{\Delta_n \setminus \mathcal{C}}$\cite{fgg86}. Specifically, each component of $(n-k)$-regular colored subgraph induced by the colors from $\Delta_n \setminus \mathcal{C}$ corresponds to the disjoint link of a $ k $-simplex with vertices labeled by $\mathcal{C}$. For further information on CW complexes and related concepts, refer to \cite{bj84}. An $(n+1)$-regular colored gem $(\Gamma,\gamma)$ of a closed manifold $M$ is called a {\em crystallization} of $M$ if it is contracted. In other words, the corresponding simplicial cell complex $\mathcal{K}(\Gamma)$ has exactly $(n+1)$ vertices.

If $ K $ is a colored triangulation of an $ n $-manifold $ M $, meaning $ K $ is an $ (n+1) $-colorable simplicial cell complex and $ |K| $ is homeomorphic to $ M $, then by reversing the steps of the above construction, we obtain a gem $ (\Gamma,\gamma) $ of $ M $. Clearly, $\mathcal{K}(\Gamma)=K$. 
Every closed PL $n$-manifold $M$ is known to admit a gem, which is an $(n+1)$-regular colored graph representing $M$. From a gem, a crystallization of $M$ can be easily obtained through certain combinatorial moves (see \cite{fg82i, fgg86} for more details). Additionally, it is well established in the literature that a gem of a closed PL manifold $M$ is bipartite if and only if $M$ is orientable. 

Let $(\Gamma,\gamma)$ and $(\bar{\Gamma},\bar{\gamma})$ be two  $(n+1)$-regular colored graphs with color sets $\Delta_n$ and $\bar{\Delta}_n$, respectively. Then $I:=(I_V,I_c):\Gamma \to \bar{\Gamma}$ is called an {\em isomorphism} if $I_V: V(\Gamma) \to V(\bar{\Gamma})$ and $I_c:\Delta_n \to \bar{\Delta}_n$ are bijective maps such that $uv$ is an edge of color $i \in \Delta_n$ if and only if $I_V(u)I_V(v)$ is an edge of color $I_c(i) \in \bar{\Delta}_n$. The graphs $(\Gamma, \gamma)$ and $(\bar{\Gamma}, \bar{\gamma})$ are then said to be {\it isomorphic}. We will now briefly explain some crystallization moves \cite{fg82i}.

\subsection{Crystallization moves}\label{move}
Let $(\Gamma,\gamma)$ be an $(n+1)$-regular colored gem of a closed $n$-manifold $M$. Let $v_1$ and $v_2$ be connected by $h$ edges colored by $i_1,i_2,\cdots, i_h$ such that $v_1$ and $v_2$ lie in different components of $\Gamma_{\Delta_n \setminus \{i_1,i_2,\cdots, i_h\}}$, where $1\le h\le n$. Then we say that $v_1$ and $v_2$ form an \textit{$h$-dipole} with respect to the color set $\{i_1,i_2,\cdots,i_h\}$. Consider the graph $\Gamma_1$ with $V(\Gamma_1)=V(\Gamma)\setminus\{v_1,v_2\}$. For $j\in \Delta_n \setminus \{i_1,i_2,\cdots, i_h\}$, if $v_1$ and $v_2$ are connected to $v_1^\prime$ and $v_2^\prime$ by $j$-colored edges in $\Gamma$, respectively, then $v_1^\prime$ and $v_2^\prime$ are incident to a $j$-colored edge in $\Gamma_1$. Edges that are not incident to $v_1$ or $v_2$ in $\Gamma$ remain unchanged in $\Gamma_1$. This procedure is called \textit{canceling $h$-dipole} with respect to the color set $\{i_1,i_2,\cdots,i_h\}$. Thus, we obtain the gem $\Gamma_1$ of $M$ from $\Gamma$ after the cancellation of this $h$-dipole. We can also obtain $\Gamma$ from $\Gamma_1$ by following the reverse procedure which is called \textit{adding $h$-dipole} with respect to the color set $\{i_1,i_2,\cdots,i_h\}$. If $2\le h\le n-1$, then the graph $\Gamma_1$ is contracted if and only if $\Gamma$ is contracted. Note that if $\Gamma$ is a crystallization, then $h$ cannot be $1$ or $n$.

Let $\Lambda_1, \Lambda_2 \subset V(\Gamma)$ be such that the subgraphs $A_1$ and $A_2$ generated by $\Lambda_1$ and $\Lambda_2$, respectively, represent $n$-dimensional balls. Let there be an isomorphism $\Phi:A_1 \to A_2$ with identity color map such that $u$ and $\Phi(u)$ are joined by an edge of color $i$ for each $u\in \Lambda_1$, and $\Lambda_1$, $\Lambda_2$ lie in different components of $\Gamma_{\hat{i}}$. Consider a new  $(n+1)$-colored graph  $\Gamma^\prime$ obtained from $\Gamma$ as follows. Let $V(\Gamma^\prime)=V(\Gamma)\setminus (\Lambda_1 \cup \Lambda_2)$. For two vertices $p$ and $q$ in $V(\Gamma^\prime)$, if $p$ and $q$ are connected to $u\in \Lambda_1$ and $\Phi(u)$, respectively, by an edge of color $j\in \Delta_n \setminus  \{i\}$ in $\Gamma$, then $p$ and $q$ are joined by an edge of color $j$ in $\Gamma^\prime$. On the other hand, if $p$ and $q$ are joined by an edge of color $j\in \Delta_n$ in $\Gamma$, then $p$ and $q$ are joined by an edge of color $j$ in $\Gamma^\prime$. The process to obtain $\Gamma^\prime$ from $\Gamma$ is called a {\it polyhedral glue move} with respect to $(\Phi,\Lambda_1,\Lambda_2,i)$. From \cite{fg82i}, it is known that $\Gamma^\prime$ also represents $M$. If $\Lambda_1$ and $\Lambda_2$ are singleton sets, then this polyhedral glue move is called a {\it simple glue move} or {\it canceling $1$-dipole}, where $\Lambda_1$ and $\Lambda_2$ form a $1$-dipole with respect to the color $i$. For more details on dipole canceling/adding and polyhedral glue moves, one can see \cite{fg82i}. To obtain a crystallization from a gem of a manifold, one can apply polyhedral glue moves. A finite number of polyhedral glue moves converts a gem to a crystallization.

\noindent \textbf{Observation:} Let $(\Gamma,\gamma)$ be a crystallization of a closed $4$-manifold $M$. Let $\Lambda_1=\{v_1,v_2\}$ and $\Lambda_1^\prime=\{v_1^\prime,v_2^\prime\}$ such that $v_1$ and $v_2$ (resp. $v_1^\prime$ and $v_2^\prime$) are connected by a $k$-colored edge in $\Gamma$. Also, $v_1$ and $v_1^\prime$ (resp. $v_2$ and $v_2^\prime$) are connected by edges colored by $i$ and $j$. Let the vertices $v_1,v_2,v_1^\prime,v_2^\prime$ lie in four different components of $\Gamma_{\Delta_4 \setminus \{i,j,k\}}$, and $\Lambda_1,\Lambda_1^\prime$ lie in different components of $\Gamma_{\Delta_4 \setminus \{i,j\}}$. Since $\Lambda_1$ and $\Lambda_1^\prime$ are isomorphic, we have an isomorphism $\Phi$ such that $\Phi(v_1)=v_1^\prime$ and $\Phi(v_2)=v_2^\prime$. Thus, we have that $v_1$ and $v_1^\prime$ (resp. $v_2$ and $v_2^\prime$) form a $2$-dipole with respect to the color set $\{i,j\}$. We choose one of these two $2$-dipoles randomly. Removing the $2$-dipole involving vertices $v_1$ and $v_1^\prime$, we get a crystallization, say $\Gamma_1$, of $M$. Since $v_1,v_2$ (resp. $v_1^\prime,v_2^\prime$) are incident to a $k$-colored edge in $\Gamma$ and the vertices $v_1,v_2,v_1^\prime,v_2^\prime$ lie in four different components of $\Gamma_{\Delta_4 \setminus \{i,j,k\}}$, the vertices $v_2$ and $v_2^\prime$ form a $3$-dipole with respect to the color set $\{i,j,k\}$ in $\Gamma_1$. Removing this $3$-dipole from $\Gamma_1$, we get a crystallization $\Gamma_2$ of $M$. Instead of first removing $2$-dipole and then the induced $3$-dipole, we can directly obtain $\Gamma_2$ from $\Gamma$ by deleting $\Lambda_1,\Lambda_1^\prime$ from $\Gamma$ and following the procedure as in a polyhedral glue move. We will call this \textit{move} with respect to $(\Phi,\Lambda_1,\Lambda_1^\prime,\{i,j\})$.

\subsection{Regular Genus of closed PL $n$-manifolds}\label{sec:genus}
For a closed connected surface, its regular genus is simply its genus. However, for
closed connected PL $n$-manifolds ($n \geq 3$), the regular genus is defined as follows.
From \cite{fg82, g81}, it is known that if $(\Gamma,\gamma)$ is a bipartite (resp. non-bipartite) $(n+1)$-regular colored graph that represents a closed connected orientable (resp. non-orientable) PL $n$-manifold $M$, then for each cyclic permutation $\varepsilon=(\varepsilon_0,\dots,\varepsilon_n)$ of $\Delta_n$, there exists a regular embedding of $\Gamma$ into an orientable (resp. non-orientable) surface $S$. A {\it regular embedding} is an embedding where each region is bounded by a bi-colored cycle with colors $\varepsilon_i,\varepsilon_{i+1}$ for some $i$ (addition is modulo $n + 1$). Moreover, the Euler characteristic $\chi_\varepsilon(\Gamma)$ of the orientable (resp. non-orientable) surface  $S$ satisfies
$$\chi_\varepsilon(\Gamma)=\sum_{i \in \mathbb{Z}_{n+1}}g_{\varepsilon_i\varepsilon_{i+1}} + (1-n)\frac{V(\Gamma)}{2},$$ 
and thus the genus (resp. half of the genus) $\rho_ \varepsilon(\Gamma)$ of $S$ satisfies
$$\rho_ \varepsilon(\Gamma)=1-\frac{\chi_\varepsilon(\Gamma)}{2}.$$
The regular genus $\rho(\Gamma)$ of $(\Gamma,\gamma)$ is defined as
$$\rho(\Gamma)= \min \{\rho_{\varepsilon}(\Gamma) \ | \  \varepsilon \ \mbox{ is a cyclic permutation of } \ \Delta_n\}.$$
The {\it regular genus} of $M$ is defined as 
$$\mathcal G(M) = \min \{\rho(\Gamma) \ | \  (\Gamma,\gamma) \mbox{ represents } M\}.$$
	
A manifold of dimension $n$ with regular genus $0$ is characterized as $\mathbb{S}^n$\cite{fg82}. Some recent works on the regular genus can be found in the following articles \cite{bb21, bc17}.
The following result gives a lower bound for the regular genus of a closed connected PL $4$-manifold.

\begin{proposition}[\cite{bc17}]\label{lbrg}
  Let $M$ be a closed connected PL $4$-manifold with $rk(\pi_1(M))=m$. Then $\mathcal G(M)\ge 2\chi(M)+5m-4$.
\end{proposition}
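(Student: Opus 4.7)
The goal is to show $\rho_\varepsilon(\Gamma)\ge 2\chi(M)+5m-4$ for every crystallization $(\Gamma,\gamma)$ of $M$ (setting $|V(\Gamma)|=2p$) and every cyclic permutation $\varepsilon=(\varepsilon_0,\dots,\varepsilon_4)$ of $\Delta_4$, and then take the infimum over $\Gamma$ and $\varepsilon$. By the formula from Subsection~\ref{sec:genus},
$$\rho_\varepsilon(\Gamma) \;=\; 1+\frac{3p}{2}-\frac{1}{2}\sum_{i\in\mathbb{Z}_5}g_{\varepsilon_i\varepsilon_{i+1}},$$
so the task reduces to an upper bound on the ``consecutive sum'' $S_{\mathrm c} := \sum_{i\in\mathbb{Z}_5}g_{\varepsilon_i\varepsilon_{i+1}}$. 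Counting simplices of $\mathcal K(\Gamma)$ by dimension via the component-count correspondence of Subsection~\ref{crystal} gives $\alpha_0=5$, $\alpha_1=\sum_{\{i,j\}\subset\Delta_4}g_{\widehat{ij}}$, $\alpha_2=\sum_{\{i,j\}\subset\Delta_4}g_{ij}$, $\alpha_3=5p$, $\alpha_4=2p$, producing the Euler identity
$$\chi(M) \;=\; 5 - \alpha_1 + \alpha_2 - 3p. \qquad(\star)$$

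The key topological input is the rank bound $g_{\widehat{ij}}\ge m+1$ for every pair $\{i,j\}\subset\Delta_4$, a consequence of the classical Gagliardi construction: contracting a spanning tree of $\Gamma_{\widehat{ij}}$ yields a presentation of $\pi_1(M)$ with $g_{\widehat{ij}}-1$ generators, and the hypothesis $\mathrm{rk}(\pi_1(M))=m$ then forces $g_{\widehat{ij}}\ge m+1$. Summing over the ten pairs gives $\alpha_1\ge 10(m+1)$, and $(\star)$ yields the global estimate $\alpha_2\ge \chi(M)+10m+5+3p$.

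The decisive step is to refine these global estimates into an inequality sensitive to the cyclic permutation $\varepsilon$. Splitting $\alpha_2 = S_{\mathrm c} + S_{\mathrm{nc}}$ into sums over the five consecutive and five non-consecutive pairs of $\varepsilon$, a short rearrangement using $(\star)$ shows that the target inequality is equivalent to
$$S_{\mathrm{nc}} \;\ge\; \alpha_1 + 5\chi(M) + 10m - 15.$$
I would attempt this by exploiting the dimension-four structure: each non-consecutive pair $\{i,j\}$ has a complementary 3-subset containing at least one consecutive pair of $\varepsilon$, so the corresponding 3-regular subgraph of $\Gamma$ governs the link of a 1-simplex of $\mathcal K(\Gamma)$. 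Via the Pezzana--Gagliardi handle decomposition of $M$ induced by $\varepsilon$, in which $k$-handles correspond to components of specific subgraphs, one may dualise the rank bound onto the non-consecutive side and obtain the claimed inequality; an alternative, more geometric route exploits Poincar\'e duality together with the fact that the disjoint link of every 1-simplex of $\mathcal K(\Gamma)$ is a 2-sphere.

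The main obstacle will be precisely this refinement: the single pair-rank inequality $g_{\widehat{ij}}\ge m+1$ is democratic across all ten pairs, whereas the consecutive versus non-consecutive split depends essentially on the choice of $\varepsilon$. Producing the $\varepsilon$-sensitive lower bound on $S_{\mathrm{nc}}$ therefore requires genuinely dimension-four topological input---either a handle-theoretic analysis of the Pezzana--Gagliardi decomposition induced by $\varepsilon$, or a duality argument on the 2-sphere links of 1-simplices---and this is where I expect the bulk of the work to lie.
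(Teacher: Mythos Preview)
The paper does not prove this proposition; it is quoted from \cite{bc17} without argument, so there is no in-paper proof to compare against. Assessing your proposal on its own: the framework is sound, the reduction to $S_{\mathrm{nc}}\ge\alpha_1+5\chi(M)+10m-15$ is valid, and you even name both ingredients that actually finish the job---the rank inequality $g_{\widehat{ij}}\ge m+1$ and the fact that each edge-link in $\mathcal K(\Gamma)$ is a $2$-sphere. The gap is that you never combine them, and you badly overestimate the remaining difficulty.

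The missing step is not a handle decomposition or a duality argument; it is a one-line application of Euler's formula. Because every component of the $3$-regular subgraph $\Gamma_{\{a,b,c\}}$ is a gem of $S^2$, one has the exact identity
\[
g_{ab}+g_{bc}+g_{ac}\;=\;p+2\,g_{abc}
\]
for \emph{every} triple $\{a,b,c\}\subset\Delta_4$. Summing this over all ten triples gives $3\alpha_2=10p+2\alpha_1$, which together with your $(\star)$ forces the exact values $\alpha_1=p+15-3\chi(M)$ and $\alpha_2=4p+10-2\chi(M)$. Summing the same identity over only the five triples whose complementary pair is \emph{non-consecutive} in $\varepsilon$ (each such triple contains one consecutive and two non-consecutive pairs) yields $S_{\mathrm c}+2S_{\mathrm{nc}}=5p+2\sum g_{abc}$, and after substituting the exact values one obtains the closed formula
\[
\rho_\varepsilon(\Gamma)\;=\;2\chi(M)-9+\sum g_{abc},
\]
the sum running over those five triples. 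The rank bound $g_{abc}\ge m+1$ then gives $\rho_\varepsilon(\Gamma)\ge 2\chi(M)+5m-4$ immediately. So the ``bulk of the work'' you anticipate is in fact a two-line computation once the sphere identity is written down. A minor wording issue: ``contracting a spanning tree of $\Gamma_{\widehat{ij}}$'' is not quite right, since that subgraph is disconnected; the standard argument collapses $\mathcal K(\Gamma)$ onto a CW complex with a single $0$-cell whose $1$-cells correspond to the $g_{\widehat{ij}}$ edges of $\mathcal K^{(1)}$ joining the vertices coloured $i$ and $j$, one of which lies in a spanning tree.
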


Davis and Januszkiewicz introduced the concept of the small cover over a simple polytope in \cite{dj91}. Let us state some terminologies and results concerning small covers.

\subsection{Small Cover}\label{smallcover}
A {\it simple $n$-polytope} is a convex polytope such that exactly $n$ codimension-one faces meet at each vertex \cite{bp02}. For example, in platonic solids, a tetrahedron, cube, and dodecahedron are simple $3$-polytopes, while octahedron and icosahedron are not simple. Let $\rho$ be the standard action of $\Z$ on $\mathbb{R}^n$. A $\Z$ action $\eta$ on an $n$-dimensional manifold $M^n$ is called a {\it locally standard action} if for each $x\in M^n$, there exists an automorphism $\theta_x$ of $\Z$, a $\Z$-stable open neighborhood $U_x$ of $x$, and a $\Z$-stable open set $V_x$ in $\mathbb{R}^n$ such that $U_x$ and $V_x$ are $\theta_x$-equivariantly homeomorphic. That is, there is a homeomorphism $f_x:U_x\to V_x$ such that $$f_x(\eta(g,u))=\rho(\theta_x(g),f_x(u)).$$ Further, if the orbit space of this action $\eta$ is a simple convex $n$-polytope $P^n$, then we say that $M^n$ is a {\it small cover} over $P^n$. Therefore, we have a projection map $\pi: M^n \to P^n$ such that $\pi(x)$ is the orbit class of $x$, for all $x\in M^n$.

Given a simple $n$-polytope $P^n$, let $\mathcal F(P^n)$ denote the set of $(n-1)$-faces of $P^n$. A function $$\lambda:\mathcal{F}(P^n) \to \Z$$ is called a {\it $\mathbb Z_2$-characteristic function} if, for each vertex $v=\bigcap_{i=1}^{n} F_i,$ the vectors $\lambda(F_i),\ 1\le i \le n$, form a basis of $\Z$, where $F_i\in \mathcal F (P^n)$. The vector $\lambda(F)$ is called the {\it $\mathbb{Z}_2$-characteristic vector} of $F,$ where $F\in \mathcal F (P^n).$ Let $G_F$ be the $l$-dimensional subspace generated by $\lambda(F_i),\ 1\le i\le l$, where $F=\bigcap_{i=1}^{l} F_i,\ F_i\in \mathcal F (P^n)$, a face of codimension-$l$. Define an equivalence relation on $\Z\times P^n$ as $$(g_1,p) \sim (g_2,p) \iff  \begin{cases}
g_1=g_2 & \text{if} \ p\in \text{int}(P^n)\\
g_1+g_2 \in G_{F_p} & \text{if} \ p\in \partial(P^n)
\end{cases},$$ where $F_p$ is the unique face containing $p$ in its relative interior. Let us denote the manifold $(\Z \times P^n)/ \sim $ by $M^n(\lambda)$. The $\Z$-action $\eta$ on $M^n(\lambda)$ defined as $\eta(g,(g_1,p))=(g+g_1,p)$ is a locally standard action and its orbit space is $P^n$. Therefore, $M^n(\lambda)$ is a small cover over $P^n.$ 

Let $ P^m $ and $ P^n $ be $ m $- and $ n $-polytopes, respectively, with $\mathbb{Z}_2$-characteristic functions $\lambda_m: \mathcal{F}(P^m) \to \mathbb{Z}_2^m$ and $\lambda_n: \mathcal{F}(P^n) \to \mathbb{Z}_2^n$. The set of $(m+n-1)$-faces of $ P^m \times P^n $ is given by $\mathcal{F}(P^m \times P^n) = \{F \times P^n, P^m \times F' \mid F \in \mathcal{F}(P^m), F' \in \mathcal{F}(P^n)\}$. Define a $\mathbb{Z}_2$-characteristic function $\lambda: \mathcal{F}(P^m \times P^n) \to \mathbb{Z}_2^{m+n}$ by $\lambda(F \times P^n) = (\lambda_m(F), \mathbf{0})$ and $\lambda(P^m \times F') = (\mathbf{0}, \lambda_n(F'))$. Then, the small cover $ M^{m+n}(\lambda) $ over $ P^m \times P^n $ is the product $ M^m(\lambda_m) \times M^n(\lambda_n) $.

Let $M^n_1$ and $M^n_2$ be two small covers over $P^n$. The small covers $M^n_1$ and $M^n_2$ are called {\it D-J equivalent} if there exists a $\theta$-equivariant homeomorphism $f:M^n_1\to M^n_2$ covering the identity on $P^n$, where $\theta$ is an automorphism of $\Z$. In short, the following diagram commutes.

 \begin{figure}[h!]
\tikzstyle{ver}=[]
\tikzstyle{edge} = [draw,thick,-]
    \centering
\begin{tikzpicture}[scale=0.6]
\foreach \x/\y/\z in
{-2.5/1.5/M_1^n,2.5/1.5/M_2^n,-2.5/-1.5/P^n,2.5/-1.5/P^n,0/2/f,0/-1/Id,-3/0/\pi_1,3/0/\pi_2}
{\node[ver] () at (\x,\y){$\z$};}
\draw [->](-1.8,1.5)--(1.8,1.5);
\draw [->](-2.5,1.1)--(-2.5,-1.1);
\draw [->](-1.8,-1.5)--(1.8,-1.5);
\draw [->](2.5,1.1)--(2.5,-1.1);
\end{tikzpicture}
\end{figure}

\noindent It is evident that two small covers $M^n(\lambda_1)$ and $M^n(\lambda_2)$ are D-J equivalent if and only if there exists an automorphism $\theta$ of $\Z$ such that $\lambda_2=\theta \circ \lambda_1$. If $M^n$ is a small cover over $P^n$, then there exists a $\mathbb{Z}_2$-characteristic function $\lambda:\mathcal F(P^n)\to \Z$ such that $M^n(\lambda)$ and $M^n$ are equivariantly homeomorphic covering the identity on $P^n$. For more on D-J equivalence and related concepts one can refer to \cite{ccl07,c08}.

\section{Genus-minimal crystallizations of $\mathbb{S}^2\times \mathbb{S}^1\times \mathbb{S}^1$ and $\mathbb{S}^1\times \mathbb{S}^1\times \mathbb{S}^1\times \mathbb{S}^1$}

 \begin{wrapfigure}{R}{0.3\textwidth}
\tikzstyle{ver}=[]
\tikzstyle{verti}=[circle, draw, fill=black!100, inner sep=0pt, minimum width=2.5pt]
\tikzstyle{edge} = [draw,thick,-]
    \centering
\begin{tikzpicture}[scale=0.5]

\foreach \x/\y/\z in
{2/-2/w2,2/0/z2,2/2.5/y2,2/4.5/x2,-1/-2/w1,-1/0/z1,-1/2.5/y1,-1/4.5/x1}
{\node[verti] (\z) at (\x,\y){};}

\foreach \x/\y/\z in
{2.5/-2.5/v_7,2.5/0.5/v_5,2.5/3/v_3,2.5/5/v_1,-1.5/-2.5/v_6,-1.5/0.5/v_4,-1.5/3/v_2,-1.5/5/v_0}
{\node[ver] () at (\x,\y){\tiny{$\z$}};}

\draw[edge] plot [smooth,tension=1.5] coordinates{(y1) (0.5,1.9) (y2)};

\draw[edge] plot [smooth,tension=1.5] coordinates{(z1) (0.5,0.6) (z2)};

\draw [edge] plot [smooth,tension=1.5] coordinates{(x1) (-2.4,1) (w1)};
\draw [edge] plot [smooth,tension=1.5] coordinates{(x2) (3.4,1) (w2)};

\draw[edge,dotted] plot [smooth,tension=1.5] coordinates{(x1) (0.5,3.9) (x2)};
\draw[edge,dotted] plot [smooth,tension=1.5] coordinates{(y1) (0.5,3.1) (y2)};
\path[edge,dotted] (z1)--(w1);
\path[edge,dotted] (z2)--(w2);
\path[edge,dashed] (x1)--(y1);
\path[edge,dashed] (x2)--(y2);

\draw[line width=2pt, line cap=rectangle, dash pattern=on 1pt off 1] (y1) -- (z1);

\draw[line width=2pt, line cap=rectangle, dash pattern=on 1pt off 1] (y2) -- (z2);  

\draw [line width=2pt, line cap=rectangle, dash pattern=on 1pt off 1] plot [smooth,tension=1.5] coordinates{(x1) (0.5,5.1) (x2)};

\draw [edge,dashed] plot [smooth,tension=1.5] coordinates{(z1) (0.5,-0.6) (z2)};

\draw [edge,dashed] plot [smooth,tension=1.5] coordinates{(w1) (0.5,-1.4) (w2)};

\draw [line width=2pt, line cap=rectangle, dash pattern=on 1pt off 1] plot [smooth,tension=1.5] coordinates{(w1) (0.5,-2.6) (w2)};

\end{tikzpicture}
\caption{Standard $8$-vertex crystallization of $\mathbb{S}^2\times \mathbb{S}^1$.}\label{fig:C1}
\end{wrapfigure}
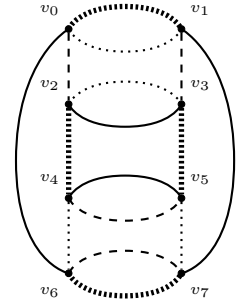

Let $(\Gamma,\gamma)$ be a $2p$-vertex crystallization of the $3$-manifold $M$. Let us name the vertices of $\Gamma$ by $v_0,v_1,\cdots,v_{2p-1}$. Since $\Delta_3$ is the color set, we denote $(\Gamma,\gamma)$ by $\Gamma(0,1,2,3)$. For $a,b,c\in \Delta_4$, by $\Gamma(a,b,c,\_)$, we mean a $3$-regular colored graph isomorphic to $\Gamma(0,1,2,\_)$, where the vertex map is the identity map and color map $\mathcal C:\{0,1,2\}\to \{a,b,c\}$ is defined as $\mathcal C(0)=a$, $\mathcal C(1)=b$, $\mathcal C(2)=c$. Since $\Gamma(0,1,2,3)$ is a crystallization, $\Gamma(a,b,c,\_)$ is connected and is a gem of $\mathbb{S}^2$. Similarly, $\Gamma(a,b,\_,d)$, $\Gamma(a,\_,c,d)$ and $\Gamma(\_,b,c,d)$ are defined. Let us denote $\Gamma(4,0,1,\_), \Gamma(4,0,\_,3), \Gamma(4,\_,2,3), \Gamma(\_,1,2,3)$ and $\Gamma(0,1,2\_)$ by $A, B, C, D$ and $E$, respectively. We will denote the vertex $v_i$ of $S$ by $v_i^S$, for $0\le i\le 2p-1$ and $S\in \{A,B,C,D,E\}$. It is well-known that we can construct a gem of $M\times I$ using $\Gamma(0,1,2,3)$ (Figure \ref{fig:Ga}). In Figure \ref{fig:G}, by saying that $A$ is joined with $B$ by a $2$-colored edge, we mean that $v_i^A$ is connected to $v_i^B$ by a $2$-colored edge, for $0\le i\le 2p-1$. Taking two copies of this gem of $M\times I$ and joining the boundary vertices appropriately, we can construct a gem of $M\times \mathbb{S}^1$ (Figure \ref{fig:Gb}). Since $E$ represents $\mathbb{S}^2$, it represents $4$-dimensional ball $\mathbb{B}^4$. Thus, we can apply the polyhedral glue move $(\Phi,\Lambda,\Lambda^\prime,3)$ on this gem of $M\times \mathbb{S}^1$, where $\Lambda$ and $\Lambda^\prime$ are the sets of vertices of $E$ and $E^\prime$, respectively, and $\Phi(v_i^E)=v_i^{E^\prime}$ for $0\le i\le 2p-1$. After applying this polyhedral glue move, we get another gem of $M\times \mathbb{S}^1$ (Figure \ref{fig:Gc}). We denote this gem by $G$. We fix an edge type corresponding to every color of $\Delta_4$ as in Figure \ref{fig:G}.

\begin{figure}[ht]
\tikzstyle{ver}=[]
\tikzstyle{edge} = [draw,thick,-]
\centering
    
\begin{subfigure}{0.9\textwidth}

\begin{tikzpicture}[scale=1]
\node[ver] () at (-6,0){$\Gamma(0,1,2,\_)$}; 
\node[ver] () at (-3,0){$\Gamma(\_,1,2,3)$};
\node[ver] () at (0,0){$\Gamma(4,\_,2,3)$};
\node[ver] () at (3,0){$\Gamma(4,0,\_,3)$};
\node[ver] () at (6,0){$\Gamma(4,0,1,\_)$};
\node[ver] () at (-6,0.5){$E$}; 
\node[ver] () at (-3,0.5){$D$};
\node[ver] () at (0,0.5){$C$};
\node[ver] () at (3,0.5){$B$};
\node[ver] () at (6,0.5){$A$};

\path[edge,dotted] (1,0)--(2,0);
\path[edge] (4,0)--(5,0);
\draw [line width=2pt, line cap=round, dash pattern=on 0pt off 1.7\pgflinewidth]  (-5,0) -- (-4,0);

\draw[line width=2pt, line cap=rectangle, dash pattern=on 1pt off 1] (-2,0) -- (-1,0);
\end{tikzpicture}

\caption{A gem of $M\times I$.}\label{fig:Ga}

\end{subfigure}
\centering
\begin{subfigure}{0.9\textwidth}
\begin{tikzpicture}[scale=1]

\begin{scope}[shift={(0,1.5)}]
    \node[ver] (E) at (-6,0){$\Gamma(0,1,2,\_)$}; 
\node[ver] (D) at (-3,0){$\Gamma(\_,1,2,3)$};
\node[ver] (C) at (0,0){$\Gamma(4,\_,2,3)$};
\node[ver] (B) at (3,0){$\Gamma(4,0,\_,3)$};
\node[ver] (A) at (6,0){$\Gamma(4,0,1,\_)$};
\node[ver] () at (-6,0.5){$E$}; 
\node[ver] () at (-3,0.5){$D$};
\node[ver] () at (0,0.5){$C$};
\node[ver] () at (3,0.5){$B$};
\node[ver] () at (6,0.5){$A$};

\path[edge,dotted] (1,0)--(2,0);
\path[edge] (4,0)--(5,0);
\draw [line width=2pt, line cap=round, dash pattern=on 0pt off 1.7\pgflinewidth]  (-5,0) -- (-4,0);

\draw[line width=2pt, line cap=rectangle, dash pattern=on 1pt off 1] (-2,0) -- (-1,0);
\end{scope}

\begin{scope}[shift={(0,0)}]
  \node[ver] (E') at (-6,0){$\Gamma(0,1,2,\_)$}; 
\node[ver] (D') at (-3,0){$\Gamma(\_,1,2,3)$};
\node[ver] (C') at (0,0){$\Gamma(4,\_,2,3)$};
\node[ver] (B') at (3,0){$\Gamma(4,0,\_,3)$};
\node[ver] (A') at (6,0){$\Gamma(4,0,1,\_)$};
\node[ver] () at (-6,-0.5){$E^\prime$}; 
\node[ver] () at (-3,-0.5){$D^\prime$};
\node[ver] () at (0,-0.5){$C^\prime$};
\node[ver] () at (3,-0.5){$B^\prime$};
\node[ver] () at (6,-0.5){$A^\prime$};

\path[edge,dotted] (1,0)--(2,0);
\path[edge] (4,0)--(5,0);
\draw [line width=2pt, line cap=round, dash pattern=on 0pt off 1.7\pgflinewidth]  (-5,0) -- (-4,0);

\draw[line width=2pt, line cap=rectangle, dash pattern=on 1pt off 1] (-2,0) -- (-1,0);  
\end{scope}
\path[edge,dashed] (A)--(A');
\path[edge,dashed] (E)--(E');
\end{tikzpicture}
\caption{A gem of $M\times \mathbb{S}^1$.}\label{fig:Gb}
\end{subfigure}
\centering
\begin{subfigure}{0.75\textwidth}
\begin{tikzpicture}[scale=1]

\begin{scope}[shift={(0,1.5)}]
   
\node[ver] (D) at (-3,0){$\Gamma(\_,1,2,3)$};
\node[ver] (C) at (0,0){$\Gamma(4,\_,2,3)$};
\node[ver] (B) at (3,0){$\Gamma(4,0,\_,3)$};
\node[ver] (A) at (6,0){$\Gamma(4,0,1,\_)$};
 
\node[ver] () at (-3,0.5){$D$};
\node[ver] () at (0,0.5){$C$};
\node[ver] () at (3,0.5){$B$};
\node[ver] () at (6,0.5){$A$};

\path[edge,dotted] (1,0)--(2,0);
\path[edge] (4,0)--(5,0);

\draw[line width=2pt, line cap=rectangle, dash pattern=on 1pt off 1] (-2,0) -- (-1,0);
\end{scope}

\begin{scope}[shift={(0,0)}]
 
\node[ver] (D') at (-3,0){$\Gamma(\_,1,2,3)$};
\node[ver] (C') at (0,0){$\Gamma(4,\_,2,3)$};
\node[ver] (B') at (3,0){$\Gamma(4,0,\_,3)$};
\node[ver] (A') at (6,0){$\Gamma(4,0,1,\_)$};
\node[ver] () at (-3,-0.5){$D^\prime$};
\node[ver] () at (0,-0.5){$C^\prime$};
\node[ver] () at (3,-0.5){$B^\prime$};
\node[ver] () at (6,-0.5){$A^\prime$};

\path[edge,dotted] (1,0)--(2,0);
\path[edge] (4,0)--(5,0);

\draw[line width=2pt, line cap=rectangle, dash pattern=on 1pt off 1] (-2,0) -- (-1,0);  
\end{scope}

\draw [line width=2pt, line cap=round, dash pattern=on 0pt off 1.7\pgflinewidth]  (D) -- (D');
\path[edge,dashed] (A)--(A');

\end{tikzpicture}
\caption{The gem $G$ of $M\times \mathbb{S}^1$.}\label{fig:Gc}
\end{subfigure}
\centering
\begin{subfigure}{0.6\textwidth}
\begin{tikzpicture}
    
\foreach \x/\y/\z in {1/0.3/0,3/0.3/1,5/0.3/2,7/0.3/3,9/0.3/4}
{\node[ver] () at (\x,\y){$\z$};}
\path[edge,dashed] (6.5,0) -- (7.5,0);

\draw [line width=2pt, line cap=round, dash pattern=on 0pt off 1.7\pgflinewidth]  (8.5,0) -- (9.5,0);
\path[edge] (4.5,0) -- (5.5,0);
\draw[line width=2pt, line cap=rectangle, dash pattern=on 1pt off 1] (0.5,0) -- (1.5,0);

\path[edge,dotted] (2.5,0) -- (3.5,0);

\end{tikzpicture}
\end{subfigure}

\caption{Construction of the gem $G$ of $M\times \mathbb{S}^1$ using the crystallization of the $3$-manifold $M$.}\label{fig:G}
\end{figure}
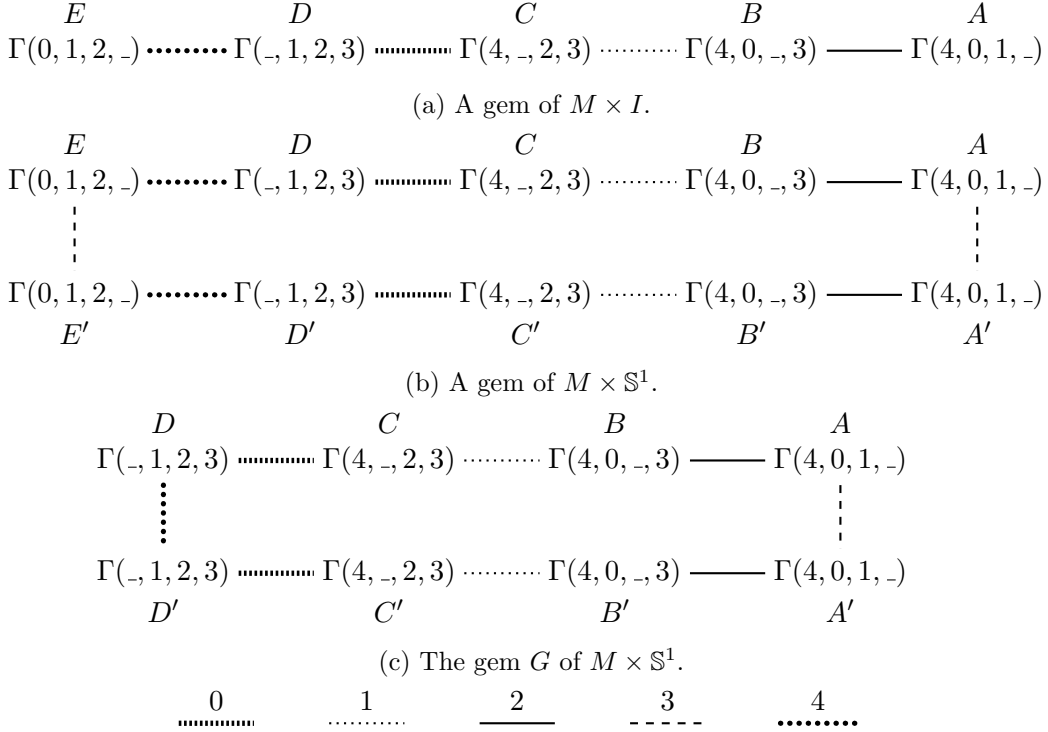

\begin{figure}[h!]
\begin{subfigure}{1\textwidth}
\tikzstyle{ver}=[]
\tikzstyle{verti}=[circle, draw, fill=black!100, inner sep=0pt, minimum width=2.5pt]
\tikzstyle{edge} = [draw,thick,-]
    \centering
\begin{tikzpicture}[scale=0.5]

\begin{scope}[shift={(-15,0)}]

\foreach \x/\y/\z in
{2/-2/w2,2/0/z2,2/2.5/y2,2/4.5/x2,-1/-2/w1,-1/0/z1,-1/2.5/y1,-1/4.5/x1}
{\node[verti] (\z) at (\x,\y){};}

\foreach \x/\y/\z in
{2.5/-2.5/v_7^{D^\prime},2.5/0.5/v_5^{D^\prime},2.5/3/v_3^{D^\prime},2.5/5/v_1^{D^\prime},-1.5/-2.5/v_6^{D^\prime},-1.5/0.5/v_4^{D^\prime},-1.5/3/v_2^{D^\prime},-1.5/5/v_0^{D^\prime}}
{\node[ver] () at (\x,\y){\tiny{$\z$}};}

\draw[edge] plot [smooth,tension=1.5] coordinates{(y1) (0.5,1.9) (y2)};

\draw[edge] plot [smooth,tension=1.5] coordinates{(z1) (0.5,0.6) (z2)};

\draw [edge] plot [smooth,tension=1.5] coordinates{(x1) (-2.4,1) (w1)};
\draw [edge] plot [smooth,tension=1.5] coordinates{(x2) (3.4,1) (w2)};

\draw[edge,dotted] plot [smooth,tension=1.5] coordinates{(x1) (0.5,3.9) (x2)};
\draw[edge,dotted] plot [smooth,tension=1.5] coordinates{(y1) (0.5,3.1) (y2)};
\path[edge,dotted] (z1)--(w1);
\path[edge,dotted] (z2)--(w2);
\path[edge,dashed] (x1)--(y1);
\path[edge,dashed] (x2)--(y2);

\draw [edge,dashed] plot [smooth,tension=1.5] coordinates{(z1) (0.5,-0.6) (z2)};

\draw [edge,dashed] plot [smooth,tension=1.5] coordinates{(w1) (0.5,-1.4) (w2)};

\end{scope}

\begin{scope}[shift={(-8,0)}]
\foreach \x/\y/\z in
{2/-2/w2,2/0/z2,2/2.5/y2,2/4.5/x2,-1/-2/w1,-1/0/z1,-1/2.5/y1,-1/4.5/x1}
{\node[verti] (\z) at (\x,\y){};}

\foreach \x/\y/\z in
{2.5/-2.5/v_7^{C^\prime},2.5/0.5/v_5^{C^\prime},2.5/3/v_3^{C^\prime},2.5/5/v_1^{C^\prime},-1.5/-2.5/v_6^{C^\prime},-1.5/0.5/v_4^{C^\prime},-1.5/3/v_2^{C^\prime},-1.5/5/v_0^{C^\prime}}
{\node[ver] () at (\x,\y){\tiny{$\z$}};}

\draw[edge] plot [smooth,tension=1.5] coordinates{(y1) (0.5,1.9) (y2)};

\draw[edge] plot [smooth,tension=1.5] coordinates{(z1) (0.5,0.6) (z2)};

\draw [edge] plot [smooth,tension=1.5] coordinates{(x1) (-2.4,1) (w1)};
\draw [edge] plot [smooth,tension=1.5] coordinates{(x2) (3.4,1) (w2)};

\path[edge,dashed] (x1)--(y1);
\path[edge,dashed] (x2)--(y2);
\draw [edge,dashed] plot [smooth,tension=1.5] coordinates{(z1) (0.5,-0.6) (z2)};

\draw [edge,dashed] plot [smooth,tension=1.5] coordinates{(w1) (0.5,-1.4) (w2)};

\draw [line width=2pt, line cap=round, dash pattern=on 0pt off 1.7\pgflinewidth] (y1) -- (z1);

\draw [line width=2pt, line cap=round, dash pattern=on 0pt off 1.7\pgflinewidth] (y2) -- (z2);  

\draw [line width=2pt, line cap=round, dash pattern=on 0pt off 1.7\pgflinewidth] plot [smooth,tension=1.5] coordinates{(x1) (0.5,5.1) (x2)};

\draw [line width=2pt, line cap=round, dash pattern=on 0pt off 1.7\pgflinewidth] plot [smooth,tension=1.5] coordinates{(w1) (0.5,-2.6) (w2)};    
\end{scope}

\begin{scope}[shift={(-1,0)}]
\foreach \x/\y/\z in
{2/-2/w2,2/0/z2,2/2.5/y2,2/4.5/x2,-1/-2/w1,-1/0/z1,-1/2.5/y1,-1/4.5/x1}
{\node[verti] (\z) at (\x,\y){};}

\foreach \x/\y/\z in
{2.5/-2.5/v_7^{B^\prime},2.5/0.5/v_5^{B^\prime},2.5/3/v_3^{B^\prime},2.5/5/v_1^{B^\prime},-1.5/-2.5/v_6^{B^\prime},-1.5/0.5/v_4^{B^\prime},-1.5/3/v_2^{B^\prime},-1.5/5/v_0^{B^\prime}}
{\node[ver] () at (\x,\y){\tiny{$\z$}};}

\draw[line width=2pt, line cap=rectangle, dash pattern=on 1pt off 1] plot [smooth,tension=1.5] coordinates{(x1) (0.5,3.9) (x2)};
\draw[line width=2pt, line cap=rectangle, dash pattern=on 1pt off 1] plot [smooth,tension=1.5] coordinates{(y1) (0.5,3.1) (y2)};
\draw[line width=2pt, line cap=rectangle, dash pattern=on 1pt off 1] (z1)--(w1);
\draw[line width=2pt, line cap=rectangle, dash pattern=on 1pt off 1] (z2)--(w2);
\path[edge,dashed] (x1)--(y1);
\path[edge,dashed] (x2)--(y2);
\draw [edge,dashed] plot [smooth,tension=1.5] coordinates{(z1) (0.5,-0.6) (z2)};

\draw [edge,dashed] plot [smooth,tension=1.5] coordinates{(w1) (0.5,-1.4) (w2)};

\draw [line width=2pt, line cap=round, dash pattern=on 0pt off 1.7\pgflinewidth] (y1) -- (z1);

\draw [line width=2pt, line cap=round, dash pattern=on 0pt off 1.7\pgflinewidth] (y2) -- (z2);  

\draw [line width=2pt, line cap=round, dash pattern=on 0pt off 1.7\pgflinewidth] plot [smooth,tension=1.5] coordinates{(x1) (0.5,5.1) (x2)};

\draw [line width=2pt, line cap=round, dash pattern=on 0pt off 1.7\pgflinewidth] plot [smooth,tension=1.5] coordinates{(w1) (0.5,-2.6) (w2)};

\end{scope}

\begin{scope}[shift={(6,0)}]
\foreach \x/\y/\z in
{2/-2/w2,2/0/z2,2/2.5/y2,2/4.5/x2,-1/-2/w1,-1/0/z1,-1/2.5/y1,-1/4.5/x1}
{\node[verti] (\z) at (\x,\y){};}

\foreach \x/\y/\z in
{2.5/-2.5/v_7^{A^\prime},2.5/0.5/v_5^{A^\prime},2.5/3/v_3^{A^\prime},2.5/5/v_1^{A^\prime},-1.5/-2.5/v_6^{A^\prime},-1.5/0.5/v_4^{A^\prime},-1.5/3/v_2^{A^\prime},-1.5/5/v_0^{A^\prime}}
{\node[ver] () at (\x,\y){\tiny{$\z$}};}

\draw[edge,dotted] plot [smooth,tension=1.5] coordinates{(y1) (0.5,1.9) (y2)};

\draw[edge,dotted] plot [smooth,tension=1.5] coordinates{(z1) (0.5,0.6) (z2)};

\draw [edge,dotted] plot [smooth,tension=1.5] coordinates{(x1) (-2.4,1) (w1)};
\draw [edge,dotted] plot [smooth,tension=1.5] coordinates{(x2) (3.4,1) (w2)};

\draw [line width=2pt, line cap=round, dash pattern=on 0pt off 1.7\pgflinewidth] (y1) -- (z1);

\draw [line width=2pt, line cap=round, dash pattern=on 0pt off 1.7\pgflinewidth] (y2) -- (z2);  

\draw [line width=2pt, line cap=round, dash pattern=on 0pt off 1.7\pgflinewidth] plot [smooth,tension=1.5] coordinates{(x1) (0.5,5.1) (x2)};

\draw [line width=2pt, line cap=round, dash pattern=on 0pt off 1.7\pgflinewidth] plot [smooth,tension=1.5] coordinates{(w1) (0.5,-2.6) (w2)};   

\draw[line width=2pt, line cap=rectangle, dash pattern=on 1pt off 1] plot [smooth,tension=1.5] coordinates{(x1) (0.5,3.9) (x2)};
\draw[line width=2pt, line cap=rectangle, dash pattern=on 1pt off 1] plot [smooth,tension=1.5] coordinates{(y1) (0.5,3.1) (y2)};
\draw[line width=2pt, line cap=rectangle, dash pattern=on 1pt off 1] (z1)--(w1);
\draw[line width=2pt, line cap=rectangle, dash pattern=on 1pt off 1] (z2)--(w2);
   
\end{scope}

\end{tikzpicture}
\caption{}\label{fig:G1a}
\end{subfigure}

\begin{subfigure}{1\textwidth}
\tikzstyle{ver}=[]

\tikzstyle{verti}=[circle, draw, fill=black!100, inner sep=0pt, minimum width=2.5pt]
\tikzstyle{edge} = [draw,thick,-]
    \centering
\begin{tikzpicture}[scale=0.5]

\begin{scope}[shift={(-15,0)}]

\foreach \x/\y/\z in
{2/-2/w2,2/0/z2,2/2.5/y2,2/4.5/x2,-1/-2/w1,-1/0/z1,-1/2.5/y1,-1/4.5/x1}
{\node[verti] (\z) at (\x,\y){};}

\foreach \x/\y/\z in
{2.5/-2.5/v_7^{D^\prime},2.5/0.5/v_5^{B^\prime},2.5/3/v_3^{D^\prime},2.5/5/v_1^{D^\prime},-1.5/-2.5/v_6^{D^\prime},-1.5/0.5/v_4^{B^\prime},-1.5/3/v_2^{D^\prime},-1.5/5/v_0^{D^\prime}}
{\node[ver] () at (\x,\y){\tiny{$\z$}};}

\draw[edge] plot [smooth,tension=1.5] coordinates{(y1) (0.5,1.9) (y2)};

\draw [edge] plot [smooth,tension=1.5] coordinates{(x1) (-2.4,1) (w1)};
\draw [edge] plot [smooth,tension=1.5] coordinates{(x2) (3.4,1) (w2)};

\draw[edge,dotted] plot [smooth,tension=1.5] coordinates{(x1) (0.5,3.9) (x2)};
\draw[edge,dotted] plot [smooth,tension=1.5] coordinates{(y1) (0.5,3.1) (y2)};
\path[edge,dotted] (z1)--(w1);
\path[edge,dotted] (z2)--(w2);
\path[edge,dashed] (x1)--(y1);
\path[edge,dashed] (x2)--(y2);

\draw [edge,dashed] plot [smooth,tension=1.5] coordinates{(z1) (0.5,-0.6) (z2)};

\draw [edge,dashed] plot [smooth,tension=1.5] coordinates{(w1) (0.5,-1.4) (w2)};

\end{scope}

\begin{scope}[shift={(-8,0)}]
\foreach \x/\y/\z in
{2/-2/w2,2/0/z2,2/2.5/y2,2/4.5/x2,-1/-2/w1,-1/0/z1,-1/2.5/y1,-1/4.5/x1}
{\node[verti] (\z) at (\x,\y){};}

\foreach \x/\y/\z in
{2.5/-2.5/v_7^{A^\prime},2.5/0.5/v_5^{D},2.5/3/v_3^{C^\prime},2.5/5/v_1^{C^\prime},-1.5/-2.5/v_6^{A^\prime},-1.5/0.5/v_4^{D},-1.5/3/v_2^{C^\prime},-1.5/5/v_0^{C^\prime}}
{\node[ver] () at (\x,\y){\tiny{$\z$}};}

\draw[edge] plot [smooth,tension=1.5] coordinates{(y1) (0.5,1.9) (y2)};

\draw[edge] plot [smooth,tension=1.5] coordinates{(z1) (0.5,0.6) (z2)};

\draw [edge] plot [smooth,tension=1.5] coordinates{(x1) (-2.4,1) (w1)};
\draw [edge] plot [smooth,tension=1.5] coordinates{(x2) (3.4,1) (w2)};

\path[edge,dashed] (x1)--(y1);
\path[edge,dashed] (x2)--(y2);
\draw [edge,dashed] plot [smooth,tension=1.5] coordinates{(z1) (0.5,-0.6) (z2)};

\draw [line width=2pt, line cap=round, dash pattern=on 0pt off 1.7\pgflinewidth] (y1) -- (z1);

\draw [line width=2pt, line cap=round, dash pattern=on 0pt off 1.7\pgflinewidth] (y2) -- (z2);  

\draw [line width=2pt, line cap=round, dash pattern=on 0pt off 1.7\pgflinewidth] plot [smooth,tension=1.5] coordinates{(x1) (0.5,5.1) (x2)};

\draw [line width=2pt, line cap=round, dash pattern=on 0pt off 1.7\pgflinewidth] plot [smooth,tension=1.5] coordinates{(w1) (0.5,-2.6) (w2)};   
\end{scope}

\begin{scope}[shift={(-1,0)}]
\foreach \x/\y/\z in
{2/-2/w2,2/0/z2,2/2.5/y2,2/4.5/x2,-1/-2/w1,-1/0/z1,-1/2.5/y1,-1/4.5/x1}
{\node[verti] (\z) at (\x,\y){};}

\foreach \x/\y/\z in
{2.5/-2.5/v_7^{D^\prime},2.5/0.5/v_5^{B^\prime},2.5/3/v_3^{B^\prime},2.5/5/v_1^{A},-1.5/-2.5/v_6^{D^\prime},-1.5/0.5/v_4^{B^\prime},-1.5/3/v_2^{B^\prime},-1.5/5/v_0^{A}}
{\node[ver] () at (\x,\y){\tiny{$\z$}};}

\draw[line width=2pt, line cap=rectangle, dash pattern=on 1pt off 1] plot [smooth,tension=1.5] coordinates{(x1) (0.5,3.9) (x2)};
\draw[line width=2pt, line cap=rectangle, dash pattern=on 1pt off 1] plot [smooth,tension=1.5] coordinates{(y1) (0.5,3.1) (y2)};
\draw[line width=2pt, line cap=rectangle, dash pattern=on 1pt off 1] (z1)--(w1);
\draw[line width=2pt, line cap=rectangle, dash pattern=on 1pt off 1] (z2)--(w2);
\path[edge,dashed] (x1)--(y1);
\path[edge,dashed] (x2)--(y2);
\draw [edge,dashed] plot [smooth,tension=1.5] coordinates{(z1) (0.5,-0.6) (z2)};

\draw [edge,dashed] plot [smooth,tension=1.5] coordinates{(w1) (0.5,-1.4) (w2)};

\draw [line width=2pt, line cap=round, dash pattern=on 0pt off 1.7\pgflinewidth] (y1) -- (z1);

\draw [line width=2pt, line cap=round, dash pattern=on 0pt off 1.7\pgflinewidth] (y2) -- (z2);  

\draw [line width=2pt, line cap=round, dash pattern=on 0pt off 1.7\pgflinewidth] plot [smooth,tension=1.5] coordinates{(x1) (0.5,5.1) (x2)};

\end{scope}

\begin{scope}[shift={(6,0)}]
\foreach \x/\y/\z in
{2/-2/w2,2/0/z2,2/2.5/y2,2/4.5/x2,-1/-2/w1,-1/0/z1,-1/2.5/y1,-1/4.5/x1}
{\node[verti] (\z) at (\x,\y){};}

\foreach \x/\y/\z in
{2.5/-2.5/v_7^{A^\prime},2.5/0.5/v_5^{A^\prime},2.5/3/v_3^{A^\prime},2.5/5/v_1^{C^\prime},-1.5/-2.5/v_6^{A^\prime},-1.5/0.5/v_4^{A^\prime},-1.5/3/v_2^{A^\prime},-1.5/5/v_0^{C^\prime}}
{\node[ver] () at (\x,\y){\tiny{$\z$}};}

\draw[edge,dotted] plot [smooth,tension=1.5] coordinates{(y1) (0.5,1.9) (y2)};

\draw[edge,dotted] plot [smooth,tension=1.5] coordinates{(z1) (0.5,0.6) (z2)};

\draw [edge,dotted] plot [smooth,tension=1.5] coordinates{(x1) (-2.4,1) (w1)};
\draw [edge,dotted] plot [smooth,tension=1.5] coordinates{(x2) (3.4,1) (w2)};

\draw[line width=2pt, line cap=round, dash pattern=on 0pt off 1.7\pgflinewidth] (y1) -- (z1);

\draw[line width=2pt, line cap=round, dash pattern=on 0pt off 1.7\pgflinewidth] (y2) -- (z2);  

\draw [line width=2pt, line cap=round, dash pattern=on 0pt off 1.7\pgflinewidth] plot [smooth,tension=1.5] coordinates{(x1) (0.5,5.1) (x2)};

\draw [line width=2pt, line cap=round, dash pattern=on 0pt off 1.7\pgflinewidth] plot [smooth,tension=1.5] coordinates{(w1) (0.5,-2.6) (w2)};

\draw[line width=2pt, line cap=rectangle, dash pattern=on 1pt off 1] plot [smooth,tension=1.5] coordinates{(y1) (0.5,3.1) (y2)};
\draw[line width=2pt, line cap=rectangle, dash pattern=on 1pt off 1] (z1)--(w1);
\draw[line width=2pt, line cap=rectangle, dash pattern=on 1pt off 1] (z2)--(w2);
   
\end{scope}

\end{tikzpicture}
\caption{}\label{fig:G1b}
\end{subfigure}

\begin{subfigure}{1\textwidth}
\tikzstyle{ver}=[]
\tikzstyle{verti}=[circle, draw, fill=black!100, inner sep=0pt, minimum width=2.5pt]
\tikzstyle{edge} = [draw,thick,-]
    \centering
\begin{tikzpicture}[scale=0.5]

\begin{scope}[shift={(-15,0)}]

\foreach \x/\y/\z in
{2/0/w2,2/2.5/y2,2/4.5/x2,-1/0/w1,-1/2.5/y1,-1/4.5/x1}
{\node[verti] (\z) at (\x,\y){};}

\foreach \x/\y/\z in
{2.5/-0.5/v_5^{A^\prime},2.5/3/v_3^{D^\prime},2.5/5/v_1^{D^\prime},-1.5/-0.5/v_4^{A^\prime},-1.5/3/v_2^{D^\prime},-1.5/5/v_0^{D^\prime}}
{\node[ver] () at (\x,\y){\tiny{$\z$}};}

\draw[edge] plot [smooth,tension=1.5] coordinates{(y1) (0.5,1.9) (y2)};

\draw [edge] plot [smooth,tension=1.5] coordinates{(x1) (-2.4,2) (w1)};
\draw [edge] plot [smooth,tension=1.5] coordinates{(x2) (3.4,2) (w2)};

\draw[edge,dotted] plot [smooth,tension=1.5] coordinates{(x1) (0.5,3.9) (x2)};
\draw[edge,dotted] plot [smooth,tension=1.5] coordinates{(y1) (0.5,3.1) (y2)};

\path[edge,dashed] (x1)--(y1);
\path[edge,dashed] (x2)--(y2);

\draw [edge,dotted] plot [smooth,tension=1.5] coordinates{(w1) (0.5,-0.5) (w2)};

\end{scope}

\begin{scope}[shift={(-8,0)}]
\foreach \x/\y/\z in
{2/0/z2,2/2.5/y2,2/4.5/x2,-1/0/z1,-1/2.5/y1,-1/4.5/x1}
{\node[verti] (\z) at (\x,\y){};}

\foreach \x/\y/\z in
{2.5/0.5/v_5^{D},2.5/3/v_3^{C^\prime},2.5/5/v_7^{A},-1.5/0.5/v_4^{D},-1.5/3/v_2^{C^\prime},-1.5/5/v_6^{A}}
{\node[ver] () at (\x,\y){\tiny{$\z$}};}

\draw[edge] plot [smooth,tension=1.5] coordinates{(y1) (0.5,1.9) (y2)};

\draw[edge] plot [smooth,tension=1.5] coordinates{(z1) (0.5,0.6) (z2)};

\path[edge,dashed] (x1)--(y1);
\path[edge,dashed] (x2)--(y2);
\draw [edge,dashed] plot [smooth,tension=1.5] coordinates{(z1) (0.5,-0.6) (z2)};

\draw[line width=2pt, line cap=round, dash pattern=on 0pt off 1.7\pgflinewidth] (y1) -- (z1);

\draw[line width=2pt, line cap=round, dash pattern=on 0pt off 1.7\pgflinewidth] (y2) -- (z2);  

\draw [line width=2pt, line cap=round, dash pattern=on 0pt off 1.7\pgflinewidth] plot [smooth,tension=1.5] coordinates{(x1) (0.5,5.1) (x2)};

\end{scope}

\begin{scope}[shift={(-1,0)}]
\foreach \x/\y/\z in
{2/0/z2,2/2.5/y2,2/4.5/x2,-1/0/z1,-1/2.5/y1,-1/4.5/x1}
{\node[verti] (\z) at (\x,\y){};}

\foreach \x/\y/\z in
{2.5/0.5/v_7^{D},2.5/3/v_3^{B^\prime},2.5/5/v_1^{A},-1.5/0.5/v_6^{D},-1.5/3/v_2^{B^\prime},-1.5/5/v_0^{A}}
{\node[ver] () at (\x,\y){\tiny{$\z$}};}

\draw[line width=2pt, line cap=rectangle, dash pattern=on 1pt off 1] plot [smooth,tension=1.5] coordinates{(x1) (0.5,3.9) (x2)};
\draw[line width=2pt, line cap=rectangle, dash pattern=on 1pt off 1] plot [smooth,tension=1.5] coordinates{(y1) (0.5,3.1) (y2)};

\path[edge,dashed] (x1)--(y1);
\path[edge,dashed] (x2)--(y2);
\draw [edge,dashed] plot [smooth,tension=1.5] coordinates{(z1) (0.5,-0.6) (z2)};

\draw[line width=2pt, line cap=round, dash pattern=on 0pt off 1.7\pgflinewidth] (y1) -- (z1);

\draw[line width=2pt, line cap=round, dash pattern=on 0pt off 1.7\pgflinewidth] (y2) -- (z2);  

\draw [line width=2pt, line cap=round, dash pattern=on 0pt off 1.7\pgflinewidth] plot [smooth,tension=1.5] coordinates{(x1) (0.5,5.1) (x2)};

\end{scope}

\begin{scope}[shift={(6,2)}]
\foreach \x/\y/\z in
{2/-2/w2,2/0/z2,2/2.5/y2,-1/-2/w1,-1/0/z1,-1/2.5/y1}
{\node[verti] (\z) at (\x,\y){};}

\foreach \x/\y/\z in
{2.5/-2.5/v_1^{D^\prime},2.5/0.5/v_5^{A^\prime},2.5/3/v_3^{A^\prime},-1.5/-2.5/v_0^{D^\prime},-1.5/0.5/v_4^{A^\prime},-1.5/3/v_2^{A^\prime}}
{\node[ver] () at (\x,\y){\tiny{$\z$}};}

\draw[edge,dotted] plot [smooth,tension=1.5] coordinates{(y1) (0.5,1.9) (y2)};

\draw[edge,dotted] plot [smooth,tension=1.5] coordinates{(z1) (0.5,0.6) (z2)};

\draw[line width=2pt, line cap=round, dash pattern=on 0pt off 1.7\pgflinewidth] (y1) -- (z1);

\draw[line width=2pt, line cap=round, dash pattern=on 0pt off 1.7\pgflinewidth] (y2) -- (z2);

\draw [edge,dotted] plot [smooth,tension=1.5] coordinates{(w1) (0.5,-2.6) (w2)};

\draw[line width=2pt, line cap=rectangle, dash pattern=on 1pt off 1] plot [smooth,tension=1.5] coordinates{(y1) (0.5,3.1) (y2)};
\draw[line width=2pt, line cap=rectangle, dash pattern=on 1pt off 1] (z1)--(w1);
\draw[line width=2pt, line cap=rectangle, dash pattern=on 1pt off 1] (z2)--(w2);
   
\end{scope}

\end{tikzpicture}
\caption{}\label{fig:G1c}
\end{subfigure}

\begin{subfigure}{1\textwidth}
\tikzstyle{ver}=[]
\tikzstyle{verti}=[circle, draw, fill=black!100, inner sep=0pt, minimum width=2.5pt]
\tikzstyle{edge} = [draw,thick,-]
    \centering
\begin{tikzpicture}[scale=0.5]

 \begin{scope}[shift={(-1,0)}]
\foreach \x/\y/\z in
{2/-2/w2,2/0/z2,2/2.5/y2,2/4.5/x2,-1/-2/w1,-1/0/z1,-1/2.5/y1,-1/4.5/x1}
{\node[verti] (\z) at (\x,\y){};}

\foreach \x/\y/\z in
{2.5/-2.6/v_3^{B^\prime},2.5/0.5/v_3^{C^\prime},2.5/2.9/v_3^{D^\prime},2.5/5/v_3^{A^\prime},-1.5/-2.6/v_2^{B^\prime},-1.5/0.5/v_2^{C^\prime},-1.5/2.9/v_2^{D^\prime},-1.5/5/v_2^{A^\prime}}
{\node[ver] () at (\x,\y){\tiny{$\z$}};}

\draw[edge] plot [smooth,tension=1.5] coordinates{(y1) (0.5,1.9) (y2)};

\draw[edge] plot [smooth,tension=1.5] coordinates{(z1) (0.5,0.6) (z2)};

\draw [edge] plot [smooth,tension=1.5] coordinates{(x1) (-2.4,1) (w1)};
\draw [edge] plot [smooth,tension=1.5] coordinates{(x2) (3.4,1) (w2)};

\draw[line width=2pt, line cap=rectangle, dash pattern=on 1pt off 1] (y1) -- (z1);

\draw[line width=2pt, line cap=rectangle, dash pattern=on 1pt off 1] (y2) -- (z2);

\draw [line width=2pt, line cap=rectangle, dash pattern=on 1pt off 1] plot [smooth,tension=1.5] coordinates{(x1) (0.5,5.1) (x2)};

\draw [line width=2pt, line cap=rectangle, dash pattern=on 1pt off 1] plot [smooth,tension=1.5] coordinates{(w1) (0.5,-2.6) (w2)};   

\draw[edge,dotted] plot [smooth,tension=1.5] coordinates{(x1) (0.5,3.9) (x2)};
\draw[edge,dotted] plot [smooth,tension=1.5] coordinates{(y1) (0.5,3.1) (y2)};
\draw[edge,dotted] (z1)--(w1);
\draw[edge,dotted] (z2)--(w2);
   
\end{scope}

    \begin{scope}[shift={(6,0)}]
\foreach \x/\y/\z in
{2/-2/w2',2/0/z2',2/2.5/y2',2/4.5/x2',-1/-2/w1',-1/0/z1',-1/2.5/y1',-1/4.5/x1'}
{\node[verti] (\z) at (\x,\y){};}

\foreach \x/\y/\z in
{2.4/-2.4/v_7^{A},2.5/0.5/v_5^{A},2.5/3/v_3^{A},2.5/5/v_1^{A},-1.4/-2.4/v_6^{A},-1.5/0.6/v_4^{A},-1.5/3/v_2^{A},-1.5/5.1/v_0^{A}}
{\node[ver] () at (\x,\y){\tiny{$\z$}};}

\draw[edge,dotted] plot [smooth,tension=1.5] coordinates{(y1') (0.5,1.9) (y2')};

\draw[edge,dotted] plot [smooth,tension=1.5] coordinates{(z1') (0.5,0.6) (z2')};

\draw [edge,dotted] plot [smooth,tension=1.5] coordinates{(x1') (-2.4,1) (w1')};
\draw [edge,dotted] plot [smooth,tension=1.5] coordinates{(x2') (3.4,1) (w2')};

\draw[line width=2pt, line cap=round, dash pattern=on 0pt off 1.7\pgflinewidth] (y1') -- (z1');

\draw[line width=2pt, line cap=round, dash pattern=on 0pt off 1.7\pgflinewidth] (y2') -- (z2');  

\draw [line width=2pt, line cap=round, dash pattern=on 0pt off 1.7\pgflinewidth] plot [smooth,tension=1.5] coordinates{(x1') (0.5,5.1) (x2')};

\draw [line width=2pt, line cap=round, dash pattern=on 0pt off 1.7\pgflinewidth] plot [smooth,tension=1.5] coordinates{(w1') (0.5,-2.6) (w2')};   

\draw[line width=2pt, line cap=rectangle, dash pattern=on 1pt off 1] plot [smooth,tension=1.5] coordinates{(x1') (0.5,3.9) (x2')};
\draw[line width=2pt, line cap=rectangle, dash pattern=on 1pt off 1] plot [smooth,tension=1.5] coordinates{(y1') (0.5,3.1) (y2')};
\draw[line width=2pt, line cap=rectangle, dash pattern=on 1pt off 1] (z1')--(w1');
\draw[line width=2pt, line cap=rectangle, dash pattern=on 1pt off 1] (z2')--(w2');
   
\end{scope}

\foreach \x/\y in
{x1/y1',x2/y2',y1/z1',y2/z2',z1/w1',z2/w2'}
{\path[edge,dashed] (\x)--(\y);} 

\draw[edge,dashed] plot [smooth,tension=0.5] coordinates{(w1) (-4,-1) (-3,5.5) (1,6) (x1')};

\draw[edge,dashed] plot [smooth,tension=0.5] coordinates{(w2) (7,-3) (10,-2) (10,3) (x2')};

\node[ver] (D) at (-9,-1){$D=\Gamma(\_,1,2,3)$};

\node[ver] (C) at (-9,1){$C=\Gamma(4,\_,2,3)$};

\node[ver] (B) at (-9,3){$B=\Gamma(4,0,\_,3)$};

\node[ver] (A) at (-9,5){$A=\Gamma(4,0,1,\_)$};

\path[edge] (A)--(B);
\path[edge,dotted] (B)--(C);
\draw[line width=2pt, line cap=rectangle, dash pattern=on 1pt off 1] (C)--(D);
\draw [line width=2pt, line cap=round, dash pattern=on 0pt off 1.7\pgflinewidth] (D)--(-4.5,-1);

\end{tikzpicture}
\caption{The crystallization $G_1^\prime$ of $\mathbb{S}^2\times \mathbb{S}^1 \times \mathbb{S}^1$ with $40$ vertices.}\label{fig:G1d}
\end{subfigure}

\caption{Crystallization moves on the gem $G_1$.}\label{fig:G1}
\end{figure}
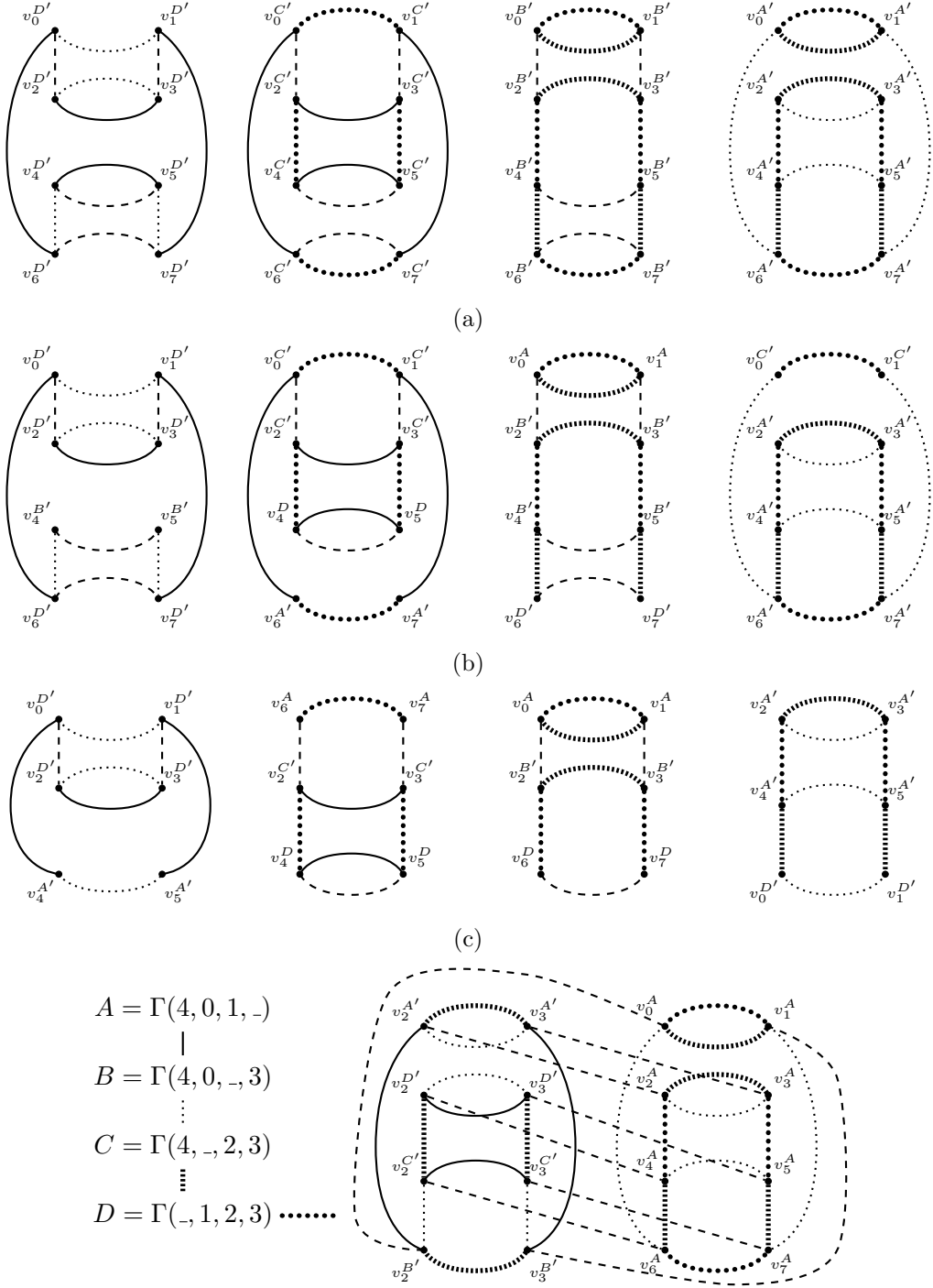

\begin{theorem}\label{theorem:G1}
A crystallization of $\mathbb{S}^2 \times \mathbb{S}^1 \times \mathbb{S}^1$ with $40$ vertices realizes the minimum possible regular genus, which is $6$.
\end{theorem}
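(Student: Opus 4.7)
The plan is to prove the theorem by establishing matching lower and upper bounds. For the \textbf{lower bound} $\mathcal{G}(\mathbb{S}^2\times\mathbb{S}^1\times\mathbb{S}^1)\ge 6$, I would invoke Proposition \ref{lbrg}. Since $\chi(\mathbb{S}^2\times\mathbb{S}^1\times\mathbb{S}^1)=\chi(\mathbb{S}^2)\cdot\chi(\mathbb{S}^1)^2=0$, and $\pi_1(\mathbb{S}^2\times\mathbb{S}^1\times\mathbb{S}^1)\cong\mathbb{Z}^2$ has rank $m=2$, the bound reads $\mathcal{G}\ge 2\cdot 0+5\cdot 2-4=6$. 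This is the easy half.

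For the \textbf{upper bound}, I would produce a crystallization of regular genus $6$ using the machinery set up immediately before the theorem. Starting from the standard $8$-vertex crystallization of $M=\mathbb{S}^2\times\mathbb{S}^1$ depicted in Figure \ref{fig:C1} (take $\Gamma(0,1,2,3)$), the construction $G$ of Figure \ref{fig:Gc} yields a gem of $M\times\mathbb{S}^1=\mathbb{S}^2\times\mathbb{S}^1\times\mathbb{S}^1$ with $8\cdot 8=64$ vertices, call it $G_1$. Then I would carry out the sequence of crystallization moves indicated in Figures \ref{fig:G1a}--\ref{fig:G1d}: first verify that certain pairs of vertices form $2$-dipoles (so that the resulting graph is still a gem of the same manifold, and remains contracted because $2\le h\le n-1=3$), next apply the induced $3$-dipole cancellations via the observation at the end of Section \ref{move}, and finally check that the graph $G_1^\prime$ at the end of the reduction is $(4{+}1)$-regular, properly $5$-edge-colored, has exactly $40$ vertices, and is contracted (each $\hat{j}$-subgraph, $j\in\Delta_4$, is connected).

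The \textbf{main obstacle} will be the regular-genus calculation for $G_1^\prime$. One must exhibit a cyclic permutation $\varepsilon=(\varepsilon_0,\varepsilon_1,\varepsilon_2,\varepsilon_3,\varepsilon_4)$ of $\{0,1,2,3,4\}$ for which
\[
\chi_\varepsilon(G_1^\prime)=\sum_{i\in\mathbb{Z}_5}g_{\varepsilon_i\varepsilon_{i+1}}+(1-4)\frac{40}{2}=\sum_{i\in\mathbb{Z}_5}g_{\varepsilon_i\varepsilon_{i+1}}-60
\]
equals $-10$, so that $\rho_\varepsilon(G_1^\prime)=1-\chi_\varepsilon/2=6$. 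Since $\mathbb{S}^2\times\mathbb{S}^1\times\mathbb{S}^1$ is orientable, $G_1^\prime$ is bipartite, so $\rho_\varepsilon$ is the genus of an orientable embedding surface. The task reduces to choosing the right $\varepsilon$ and counting bicolored cycles for each of the five consecutive pairs $\{\varepsilon_i,\varepsilon_{i+1}\}$; the inherited block structure of $G_1^\prime$ (built from copies of the $3$-regular pieces $A,B,C,D$) lets these counts be read off blockwise from the known bicolored cycle structure of the base crystallization in Figure \ref{fig:C1} together with the dashed and round-dashed $3$- and $4$-edges added by the construction. The target to hit is $\sum g_{\varepsilon_i\varepsilon_{i+1}}=50$; a cyclic ordering compatible with the edge-type pattern of Figure \ref{fig:G} is the natural candidate.

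Combining $\mathcal{G}\le\rho(G_1^\prime)\le\rho_\varepsilon(G_1^\prime)=6$ with the lower bound above gives $\mathcal{G}(\mathbb{S}^2\times\mathbb{S}^1\times\mathbb{S}^1)=6$, realized by the $40$-vertex crystallization $G_1^\prime$, proving both assertions of the theorem.
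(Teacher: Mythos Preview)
Your proposal is correct and follows essentially the same approach as the paper: the lower bound via Proposition~\ref{lbrg} with $\chi=0$ and $m=2$, the construction of the $64$-vertex gem $G_1$ from the standard $8$-vertex crystallization of $\mathbb{S}^2\times\mathbb{S}^1$, the reduction to a $40$-vertex crystallization $G_1'$ through the moves of Figure~\ref{fig:G1}, and the genus computation. Two small points of precision: the first three reductions (Figure~\ref{fig:G1a}$\to$\ref{fig:G1b}) are \emph{polyhedral glue moves} with respect to two-vertex blocks $\Lambda_i$, not $2$-dipole cancellations; only the last three reductions use the $2$-dipole/$3$-dipole ``move'' of the Observation in Subsection~\ref{move}. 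Also, the paper identifies the genus-minimizing permutation explicitly as $\varepsilon=(0,2,4,1,3)$, for which each of the five consecutive bicolored counts equals $10$, giving $\sum g_{\varepsilon_i\varepsilon_{i+1}}=50$ and hence $\rho_\varepsilon(G_1')=6$.
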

\begin{proof}

For constructing the crystallization of $\mathbb{S}^2 \times \mathbb{S}^1 \times \mathbb{S}^1$, let us take $\Gamma(0,1,2,3)$ to be the standard $8$-vertex crystallization of $\mathbb{S}^2 \times \mathbb{S}^1$ (Figure \ref{fig:C1}). Then, $G$ in Figure \ref{fig:Gc} is a gem of $\mathbb{S}^2 \times \mathbb{S}^1 \times \mathbb{S}^1$ with $64$ vertices. We will denote this gem by $G_1$. We will apply some crystallization moves involving vertices of $A^\prime, B^\prime, C^\prime$ and $D^\prime$ to obtain a $40$-vertex crystallization of $\mathbb{S}^2 \times \mathbb{S}^1 \times \mathbb{S}^1$ (Figure \ref{fig:G1}). For the sake of conciseness, we present only $A^\prime, B^\prime, C^\prime$ and $D^\prime$ in Figures \ref{fig:G1a}, \ref{fig:G1b} and \ref{fig:G1c}. For a complete understanding, see these figures together with Figure \ref{fig:Gc}. We apply three polyhedral glue moves $(\Phi_i,\Lambda_i,\Lambda_i^\prime,i-1)$ on $G_1$, for $1\le i\le 3$ (Figure \ref{fig:G1a}). The sets $\Lambda_1, \Lambda_2$ and $\Lambda_3$ are $\{v_4^{D^\prime},v_5^{D^\prime}\}, \{v_6^{C^\prime},v_7^{C^\prime}\}$ and $\{v_0^{B^\prime},v_1^{B^\prime}\}$, respectively. The isomorphism $\Phi_i$ maps $\Lambda_i$ to the set of vertices that are adjacent to the vertices of $\Lambda_i$ via edges of color $i-1$ in $G_1$, for $1 \le i \le 3$. So, the sets $\Lambda_1^\prime, \Lambda_2^\prime$ and $\Lambda_3^\prime$ are $\{v_4^{C^\prime},v_5^{C^\prime}\}, \{v_6^{B^\prime},v_7^{B^\prime}\}$ and $\{v_0^{A^\prime},v_1^{A^\prime}\}$, respectively. After applying these three polyhedral glue moves on $G_1$, we get a crystallization of $\mathbb{S}^2 \times \mathbb{S}^1 \times \mathbb{S}^1$ with $52$ vertices, say $G_1^1$ (Figure \ref{fig:G1b}). 

Let $\Lambda_4=\{v_4^{B^\prime},v_5^{B^\prime}\}, \Lambda_4^\prime=\{v_6^{D^\prime},v_7^{D^\prime}\}, \Lambda_5=\{v_0^{C^\prime},v_1^{C^\prime}\}$ and $\Lambda_4^\prime=\{v_6^{A^\prime},v_7^{A^\prime}\}$. Note that $\Lambda_4, \Lambda_4^\prime$ and $\Lambda_5, \Lambda_5^\prime$ satisfy conditions of the observation in Subsection \ref{move}. Thus, applying moves with respect to $(\Phi_4,\Lambda_4, \Lambda_4^\prime,\{0,1\})$ and $(\Phi_5,\Lambda_5, \Lambda_5^\prime,\{1,2\})$ on $G_1^1$, we get a crystallization of $\mathbb{S}^2 \times \mathbb{S}^1 \times \mathbb{S}^1$ with $44$ vertices, say $G_1^2$ (Figure \ref{fig:G1c}). Again, $\Lambda_6=\{v_0^{D^\prime},v_1^{D^\prime}\}, \Lambda_6^\prime=\{v_4^{A^\prime},v_5^{A^\prime}\}$ satisfy conditions of the observation in Subsection \ref{move}. Thus, applying the move with respect to $(\Phi_6,\Lambda_6, \Lambda_6^\prime,\{0,2\})$ on $G_1^2$, we get the crystallization $G_1^\prime$ of $\mathbb{S}^2 \times \mathbb{S}^1 \times \mathbb{S}^1$ with $40$ vertices (Figure \ref{fig:G1d}). Note that the subgraph of $G_1^\prime$ generated by the remaining vertices of $A^\prime, B^\prime,C^\prime$ and $D^\prime$ is isomorphic to $\Gamma(0,1,2,\_)$.

Now, to compute the regular genus of $G_1^\prime$, we will compute the number of bi-colored cycles. From Figure \ref{fig:G1d}, we have that $g_{\{i,j\}}=10$, when $\{i,j\}\in \{\{0,2\},\{0,3\},\{1,3\},\{1,4\},\{2,4\}\}$, and $g_{\{i,j\}}=8$, when $\{i,j\}\in \{\{0,1\},\{0,4\},\{1,2\},\{2,3\},\{3,4\}\}$. Therefore, the regular genus of $G_1^\prime$ is 
$$\rho(G_1^\prime)=\rho_{\varepsilon}(G_1^\prime)=1-\frac{1}{2}\left(\frac{-3}{2}(40)+g_{\{0,2\}}+g_{\{2,4\}}+g_{\{1,4\}}+g_{\{1,3\}}+g_{\{0,3\}}\right)$$ $$=1-\frac{1}{2}(-60+10+10+10+10+10)=6,$$
where $\varepsilon=(0,2,4,1,3)$. Since $2\chi(M)+5m-4=6$ for $M=\mathbb{S}^2 \times \mathbb{S}^1 \times \mathbb{S}^1$, we get that $\mathcal G(\mathbb{S}^2 \times \mathbb{S}^1 \times \mathbb{S}^1)=\rho(G_1^\prime)=6$ due to Proposition \ref{lbrg}.
\end{proof}

The isomorphism signature of the crystallization $G_1^\prime$ of $\mathbb{S}^2\times \mathbb{S}^1\times \mathbb{S}^1$, obtained using Regina, is the following. Ensure that the isomorphism signature contains no space before using it in Regina.

\noindent  OLvLvMLwvLLLPPMPQQQwLzMQQQzQQQQLQQcbfggkhrpqxqyrszxCtwAzBuBvvwEFG\\EJDHGIIDDDEFFGGKHHILKLJMNMNMMNNaaaaaaaaaaaaaaaaaaaaaaaaaaaaaaaaaaaaaa\\aaaaaaaaaaaaaaaaaaaaaaaaaaaaaaaaaaaaaaaaaaaaaaaaaaaaaaaaaaaaaaaaaaaaaaaaaaaaaaaaaa\\aa

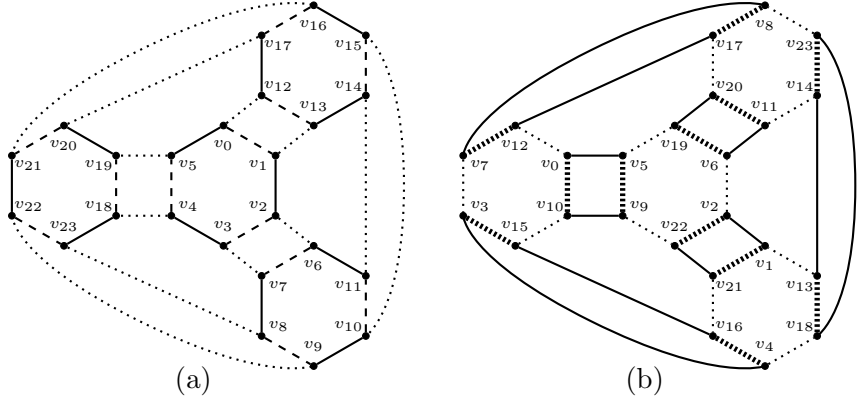
\begin{figure}[h!]
\tikzstyle{ver}=[]
\tikzstyle{verti}=[circle, draw, fill=black!100, inner sep=0pt, minimum width=2.5pt]
\tikzstyle{edge} = [draw,thick,-]
    \centering
\begin{tikzpicture}[scale=0.4]
\begin{scope}
\begin{scope}[shift={(0,0)},rotate=-30]
\foreach \x/\y in {120/0,180/5,240/4,300/3,0/2,60/1}
{\node[verti] (\y) at (\x:2){};}

\foreach \x/\y in {120/0,180/5,240/4,300/3,0/2,60/1}
{\node[ver] () at (\x:1.35){\tiny{$v_{\y}$}};}

\foreach \x/\y in 
{0/1,2/3,4/5}
{\path[edge,dashed] (\x)--(\y);}

\foreach \x/\y in 
{2/1,4/3,0/5}
{\path[edge] (\x)--(\y);}

\end{scope}

\begin{scope}[shift={(-5.3,0)},rotate=-30]
\foreach \x/\y in {120/20,180/21,240/22,300/23,0/18,60/19}
{\node[verti] (\y) at (\x:2){};}

\foreach \x/\y in {120/20,180/21,240/22,300/23,0/18,60/19}
{\node[ver] () at (\x:1.35){\tiny{$v_{\y}$}};}

\foreach \x/\y in 
{18/19,20/21,22/23}
{\path[edge,dashed] (\x)--(\y);}

\foreach \x/\y in 
{20/19,21/22,23/18}
{\path[edge] (\x)--(\y);}

\end{scope}

\begin{scope}[shift={(3,4)},rotate=-30]
\foreach \x/\y in {120/16,180/17,240/12,300/13,0/14,60/15}
{\node[verti] (\y) at (\x:2){};}

\foreach \x/\y in {120/16,180/17,240/12,300/13,0/14,60/15}
{\node[ver] () at (\x:1.35){\tiny{$v_{\y}$}};}

\foreach \x/\y in 
{12/13,14/15,16/17}
{\path[edge,dashed] (\x)--(\y);}

\foreach \x/\y in 
{13/14,15/16,17/12}
{\path[edge] (\x)--(\y);}

\end{scope}

\begin{scope}[shift={(3,-4)},rotate=-30]
\foreach \x/\y in {120/6,180/7,240/8,300/9,0/10,60/11}
{\node[verti] (\y) at (\x:2){};}

\foreach \x/\y in {120/6,180/7,240/8,300/9,0/10,60/11}
{\node[ver] () at (\x:1.35){\tiny{$v_{\y}$}};}

\foreach \x/\y in 
{6/7,8/9,10/11}
{\path[edge,dashed] (\x)--(\y);}

\foreach \x/\y in 
{7/8,9/10,11/6}
{\path[edge] (\x)--(\y);}

\end{scope}

\foreach \x/\y in 
{0/12,1/13,2/6,3/7,4/18,5/19,17/20,11/14,8/23}
{\path[edge,dotted] (\x)--(\y);}

\draw [edge,dotted] plot [smooth,tension=1.5] coordinates{(21) (-2.8,4.5) (16)};

\draw [edge,dotted] plot [smooth,tension=1.5] coordinates{(15) (6,0) (10)};

\draw [edge,dotted] plot [smooth,tension=1.5] coordinates{(22) (-2.8,-4.5) (9)};

\node[ver] () at (-1,-6.5){(a)};

\end{scope}

\begin{scope}[shift={(15,0)}]
\begin{scope}[shift={(0,0)},rotate=-30]
\foreach \x/\y in {120/0,180/5,240/4,300/3,0/2,60/1}
{\node[verti] (\y) at (\x:2){};}

\foreach \x/\y in {120/19,180/5,240/9,300/22,0/2,60/6}
{\node[ver] () at (\x:1.35){\tiny{$v_{\y}$}};}

\foreach \x/\y in 
{0/1,2/3,4/5}
{\draw [line width=2pt, line cap=rectangle, dash pattern=on 1pt off 1] (\x)--(\y);}

\foreach \x/\y in 
{2/1,4/3,0/5}
{\path[edge,dotted] (\x)--(\y);}

\end{scope}

\begin{scope}[shift={(-5.3,0)},rotate=-30]
\foreach \x/\y in {120/20,180/21,240/22,300/23,0/18,60/19}
{\node[verti] (\y) at (\x:2){};}

\foreach \x/\y in {120/12,180/7,240/3,300/15,0/10,60/0}
{\node[ver] () at (\x:1.35){\tiny{$v_{\y}$}};}

\foreach \x/\y in 
{18/19,20/21,22/23}
{\draw [line width=2pt, line cap=rectangle, dash pattern=on 1pt off 1] (\x)--(\y);}

\foreach \x/\y in 
{20/19,21/22,23/18}
{\path[edge,dotted] (\x)--(\y);}

\end{scope}

\begin{scope}[shift={(3,4)},rotate=-30]
\foreach \x/\y in {120/16,180/17,240/12,300/13,0/14,60/15}
{\node[verti] (\y) at (\x:2){};}

\foreach \x/\y in {120/8,180/17,240/20,300/11,0/14,60/23}
{\node[ver] () at (\x:1.35){\tiny{$v_{\y}$}};}

\foreach \x/\y in 
{12/13,14/15,16/17}
{\draw [line width=2pt, line cap=rectangle, dash pattern=on 1pt off 1] (\x)--(\y);}

\foreach \x/\y in 
{13/14,15/16,17/12}
{\path[edge,dotted] (\x)--(\y);}

\end{scope}

\begin{scope}[shift={(3,-4)},rotate=-30]
\foreach \x/\y in {120/6,180/7,240/8,300/9,0/10,60/11}
{\node[verti] (\y) at (\x:2){};}

\foreach \x/\y in {120/1,180/21,240/16,300/4,0/18,60/13}
{\node[ver] () at (\x:1.35){\tiny{$v_{\y}$}};}

\foreach \x/\y in 
{6/7,8/9,10/11}
{\draw [line width=2pt, line cap=rectangle, dash pattern=on 1pt off 1] (\x)--(\y);}

\foreach \x/\y in 
{7/8,9/10,11/6}
{\path[edge,dotted] (\x)--(\y);}

\end{scope}

\foreach \x/\y in 
{0/12,1/13,2/6,3/7,4/18,5/19,17/20,11/14,8/23}
{\path [edge] (\x)--(\y);}

\draw [edge] plot [smooth,tension=1.5] coordinates{(21) (-2.8,4.5) (16)};

\draw [edge] plot [smooth,tension=1.5] coordinates{(15) (6,0) (10)};

\draw [edge] plot [smooth,tension=1.5] coordinates{(22) (-2.8,-4.5) (9)};

\node[ver] () at (-1,-6.5){(b)};

\end{scope}

\end{tikzpicture}
\caption{$24$-vertex crystallization, $\Gamma=(a)\cup (b)$, of $\mathbb{S}^1\times \mathbb{S}^1\times \mathbb{S}^1$.}\label{fig:C2}
\end{figure}

\begin{figure}[h!]
\tikzstyle{ver}=[]
\tikzstyle{verti}=[circle, draw, fill=black!100, inner sep=0pt, minimum width=2.5pt]
\tikzstyle{edge} = [draw,thick,-]
    \centering
\begin{tikzpicture}[scale=0.4]
\begin{scope}[shift={(15,-13.5)}]
\begin{scope}[shift={(0,0)},rotate=-30]
\foreach \x/\y in {120/0,180/5,240/4,300/3,0/2,60/1}
{\node[verti] (\y) at (\x:2){};}

\foreach \x/\y in {120/0,180/5,240/4,300/3,0/2,60/1}
{\node[ver] () at (\x:1.35){\tiny{$v_{\y}^{D^\prime}$}};}

\foreach \x/\y in 
{0/1,2/3,4/5}
{\path[edge,dashed] (\x)--(\y);}

\foreach \x/\y in 
{2/1,4/3,0/5}
{\path[edge] (\x)--(\y);}

\end{scope}

\begin{scope}[shift={(-5.3,0)},rotate=-30]
\foreach \x/\y in {120/20,180/21,240/22,300/23,0/18,60/19}
{\node[verti] (\y) at (\x:2){};}

\foreach \x/\y in {120/20,180/21,240/22,300/23,0/18,60/19}
{\node[ver] () at (\x:1.35){\tiny{$v_{\y}^{D^\prime}$}};}

\foreach \x/\y in 
{18/19,20/21,22/23}
{\path[edge,dashed] (\x)--(\y);}

\foreach \x/\y in 
{20/19,21/22,23/18}
{\path[edge] (\x)--(\y);}

\end{scope}

\begin{scope}[shift={(3,4)},rotate=-30]
\foreach \x/\y in {120/16,180/17,240/12,300/13,0/14,60/15}
{\node[verti] (\y) at (\x:2){};}

\foreach \x/\y in {120/16,180/17,240/12,300/13,0/14,60/15}
{\node[ver] () at (\x:1.35){\tiny{$v_{\y}^{D^\prime}$}};}

\foreach \x/\y in 
{12/13,14/15,16/17}
{\path[edge,dashed] (\x)--(\y);}

\foreach \x/\y in 
{13/14,15/16,17/12}
{\path[edge] (\x)--(\y);}

\end{scope}

\begin{scope}[shift={(3,-4)},rotate=-30]
\foreach \x/\y in {120/6,180/7,240/8,300/9,0/10,60/11}
{\node[verti] (\y) at (\x:2){};}

\foreach \x/\y in {120/6,180/7,240/8,300/9,0/10,60/11}
{\node[ver] () at (\x:1.35){\tiny{$v_{\y}^{D^\prime}$}};}

\foreach \x/\y in 
{6/7,8/9,10/11}
{\path[edge,dashed] (\x)--(\y);}

\foreach \x/\y in 
{7/8,9/10,11/6}
{\path[edge] (\x)--(\y);}

\end{scope}

\foreach \x/\y in 
{0/12,1/13,2/6,3/7,4/18,5/19,17/20,11/14,8/23}
{\path[edge,dotted] (\x)--(\y);}

\draw [edge,dotted] plot [smooth,tension=1.5] coordinates{(21) (-2.8,4.5) (16)};

\draw [edge,dotted] plot [smooth,tension=1.5] coordinates{(15) (6,0) (10)};

\draw [edge,dotted] plot [smooth,tension=1.5] coordinates{(22) (-2.8,-4.5) (9)};

\end{scope}

\begin{scope}[shift={(0,-13.5)}]
\begin{scope}[shift={(0,0)},rotate=-30]
\foreach \x/\y in {120/0,180/5,240/4,300/3,0/2,60/1}
{\node[verti] (\y) at (\x:2){};}

\foreach \x/\y in {120/3,180/2,240/1,300/0,0/5,60/4}
{\node[ver] () at (\x:1.35){\tiny{$v_{\y}^{C^\prime}$}};}

\foreach \x/\y in 
{0/1,2/3,4/5}
{\path[edge] (\x)--(\y);}

\foreach \x/\y in 
{2/1,4/3,0/5}
{\path[edge,dashed] (\x)--(\y);}

\end{scope}

\begin{scope}[shift={(-5.3,0)},rotate=-30]
\foreach \x/\y in {120/20,180/21,240/22,300/23,0/18,60/19}
{\node[verti] (\y) at (\x:2){};}

\foreach \x/\y in {120/23,180/18,240/19,300/20,0/21,60/22}
{\node[ver] () at (\x:1.35){\tiny{$v_{\y}^{C^\prime}$}};}

\foreach \x/\y in 
{18/19,20/21,22/23}
{\path[edge] (\x)--(\y);}

\foreach \x/\y in 
{20/19,21/22,23/18}
{\path[edge,dashed] (\x)--(\y);}

\end{scope}

\begin{scope}[shift={(3,4)},rotate=-30]
\foreach \x/\y in {120/16,180/17,240/12,300/13,0/14,60/15}
{\node[verti] (\y) at (\x:2){};}

\foreach \x/\y in {120/13,180/14,240/15,300/16,0/17,60/12}
{\node[ver] () at (\x:1.35){\tiny{$v_{\y}^{C^\prime}$}};}

\foreach \x/\y in 
{12/13,14/15,16/17}
{\path[edge] (\x)--(\y);}

\foreach \x/\y in 
{13/14,15/16,17/12}
{\path[edge,dashed] (\x)--(\y);}

\end{scope}

\begin{scope}[shift={(3,-4)},rotate=-30]
\foreach \x/\y in {120/6,180/7,240/8,300/9,0/10,60/11}
{\node[verti] (\y) at (\x:2){};}

\foreach \x/\y in {120/9,180/10,240/11,300/6,0/7,60/8}
{\node[ver] () at (\x:1.35){\tiny{$v_{\y}^{C^\prime}$}};}

\foreach \x/\y in 
{6/7,8/9,10/11}
{\path[edge] (\x)--(\y);}

\foreach \x/\y in 
{7/8,9/10,11/6}
{\path[edge,dashed] (\x)--(\y);}

\end{scope}

\foreach \x/\y in 
{0/12,1/13,2/6,3/7,4/18,5/19,17/20,11/14,8/23}
{\draw [line width=2pt, line cap=round, dash pattern=on 0pt off 1.7\pgflinewidth] (\x)--(\y);}

\draw[line width=2pt, line cap=round, dash pattern=on 0pt off 1.7\pgflinewidth] plot [smooth,tension=1.5] coordinates{(21) (-2.8,4.5) (16)};

\draw[line width=2pt, line cap=round, dash pattern=on 0pt off 1.7\pgflinewidth] plot [smooth,tension=1.5] coordinates{(15) (6,0) (10)};

\draw[line width=2pt, line cap=round, dash pattern=on 0pt off 1.7\pgflinewidth] plot[smooth,tension=1.5] coordinates{(22) (-2.8,-4.5) (9)};

\end{scope}

\begin{scope}[shift={(0,0)}]
\begin{scope}[shift={(0,0)},rotate=-30]
\foreach \x/\y in {120/0,180/5,240/4,300/3,0/2,60/1}
{\node[verti] (\y) at (\x:2){};}

\foreach \x/\y in {120/3,180/15,240/14,300/23,0/22,60/2}
{\node[ver] () at (\x:1.35){\tiny{$v_{\y}^{B^\prime}$}};}

\foreach \x/\y in 
{0/1,2/3,4/5}
{\path[edge,dashed] (\x)--(\y);}

\foreach \x/\y in 
{2/1,4/3,0/5}
{\draw [line width=2pt, line cap=round, dash pattern=on 0pt off 1.7\pgflinewidth] (\x)--(\y);}

\end{scope}

\begin{scope}[shift={(-5.3,0)},rotate=-30]
\foreach \x/\y in {120/20,180/21,240/22,300/23,0/18,60/19}
{\node[verti] (\y) at (\x:2){};}

\foreach \x/\y in {120/0,180/1,240/21,300/20,0/11,60/10}
{\node[ver] () at (\x:1.35){\tiny{$v_{\y}^{B^\prime}$}};}

\foreach \x/\y in 
{18/19,20/21,22/23}
{\path[edge,dashed] (\x)--(\y);}

\foreach \x/\y in 
{20/19,21/22,23/18}
{\draw[line width=2pt, line cap=round, dash pattern=on 0pt off 1.7\pgflinewidth] (\x)--(\y);}

\end{scope}

\begin{scope}[shift={(3,4)},rotate=-30]
\foreach \x/\y in {120/16,180/17,240/12,300/13,0/14,60/15}
{\node[verti] (\y) at (\x:2){};}

\foreach \x/\y in {120/13,180/12,240/7,300/6,0/19,60/18}
{\node[ver] () at (\x:1.35){\tiny{$v_{\y}^{B^\prime}$}};}

\foreach \x/\y in 
{12/13,14/15,16/17}
{\path[edge,dashed] (\x)--(\y);}

\foreach \x/\y in 
{13/14,15/16,17/12}
{\draw [line width=2pt, line cap=round, dash pattern=on 0pt off 1.7\pgflinewidth] (\x)--(\y);}

\end{scope}

\begin{scope}[shift={(3,-4)},rotate=-30]
\foreach \x/\y in {120/6,180/7,240/8,300/9,0/10,60/11}
{\node[verti] (\y) at (\x:2){};}

\foreach \x/\y in {120/9,180/8,240/17,300/16,0/4,60/5}
{\node[ver] () at (\x:1.35){\tiny{$v_{\y}^{B^\prime}$}};}

\foreach \x/\y in 
{6/7,8/9,10/11}
{\path[edge,dashed] (\x)--(\y);}

\foreach \x/\y in 
{7/8,9/10,11/6}
{\draw [line width=2pt, line cap=round, dash pattern=on 0pt off 1.7\pgflinewidth] (\x)--(\y);}

\end{scope}

\foreach \x/\y in 
{0/12,1/13,2/6,3/7,4/18,5/19,17/20,11/14,8/23}
{\draw [line width=2pt, line cap=rectangle, dash pattern=on 1pt off 1] (\x)--(\y);}

\draw[line width=2pt, line cap=rectangle, dash pattern=on 1pt off 1] plot [smooth,tension=1.5] coordinates{(21) (-2.8,4.5) (16)};

\draw[line width=2pt, line cap=rectangle, dash pattern=on 1pt off 1] plot [smooth,tension=1.5] coordinates{(15) (6,0) (10)};

\draw[line width=2pt, line cap=rectangle, dash pattern=on 1pt off 1] plot[smooth,tension=1.5] coordinates{(22) (-2.8,-4.5) (9)};

\end{scope}

\begin{scope}[shift={(15,0)}]
\begin{scope}[shift={(0,0)},rotate=-30]
\foreach \x/\y in {120/0,180/5,240/4,300/3,0/2,60/1}
{\node[verti] (\y) at (\x:2){};}

\foreach \x/\y in {120/19,180/5,240/9,300/22,0/2,60/6}
{\node[ver] () at (\x:1.35){\tiny{$v_{\y}^{A^\prime}$}};}

\foreach \x/\y in 
{0/1,2/3,4/5}
{\draw [line width=2pt, line cap=round, dash pattern=on 0pt off 1.7\pgflinewidth] (\x)--(\y);}

\foreach \x/\y in 
{2/1,4/3,0/5}
{\draw [line width=2pt, line cap=rectangle, dash pattern=on 1pt off 1] (\x)--(\y);}

\end{scope}

\begin{scope}[shift={(-5.3,0)},rotate=-30]
\foreach \x/\y in {120/20,180/21,240/22,300/23,0/18,60/19}
{\node[verti] (\y) at (\x:2){};}

\foreach \x/\y in {120/12,180/7,240/3,300/15,0/10,60/0}
{\node[ver] () at (\x:1.35){\tiny{$v_{\y}^{A^\prime}$}};}

\foreach \x/\y in 
{18/19,20/21,22/23}
{\draw[line width=2pt, line cap=round, dash pattern=on 0pt off 1.7\pgflinewidth] (\x)--(\y);}

\foreach \x/\y in 
{20/19,21/22,23/18}
{\draw [line width=2pt, line cap=rectangle, dash pattern=on 1pt off 1](\x)--(\y);}

\end{scope}

\begin{scope}[shift={(3,4)},rotate=-30]
\foreach \x/\y in {120/16,180/17,240/12,300/13,0/14,60/15}
{\node[verti] (\y) at (\x:2){};}

\foreach \x/\y in {120/8,180/17,240/20,300/11,0/14,60/23}
{\node[ver] () at (\x:1.35){\tiny{$v_{\y}^{A^\prime}$}};}

\foreach \x/\y in 
{12/13,14/15,16/17}
{\draw [line width=2pt, line cap=round, dash pattern=on 0pt off 1.7\pgflinewidth] (\x)--(\y);}

\foreach \x/\y in 
{13/14,15/16,17/12}
{\draw [line width=2pt, line cap=rectangle, dash pattern=on 1pt off 1] (\x)--(\y);}

\end{scope}

\begin{scope}[shift={(3,-4)},rotate=-30]
\foreach \x/\y in {120/6,180/7,240/8,300/9,0/10,60/11}
{\node[verti] (\y) at (\x:2){};}

\foreach \x/\y in {120/1,180/21,240/16,300/4,0/18,60/13}
{\node[ver] () at (\x:1.35){\tiny{$v_{\y}^{A^\prime}$}};}

\foreach \x/\y in 
{7/8,9/10,11/6}
{\draw [line width=2pt, line cap=rectangle, dash pattern=on 1pt off 1] (\x)--(\y);}

\foreach \x/\y in 
{6/7,8/9,10/11}
{\draw [line width=2pt, line cap=round, dash pattern=on 0pt off 1.7\pgflinewidth] (\x)--(\y);}

\end{scope}

\foreach \x/\y in 
{0/12,1/13,2/6,3/7,4/18,5/19,17/20,11/14,8/23}
{\path [edge,dotted] (\x)--(\y);}

\draw[edge,dotted] plot [smooth,tension=1.5] coordinates{(21) (-2.8,4.5) (16)};

\draw[edge,dotted] plot [smooth,tension=1.5] coordinates{(15) (6,0) (10)};

\draw[edge,dotted] plot[smooth,tension=1.5] coordinates{(22) (-2.8,-4.5) (9)};
\end{scope}

\end{tikzpicture}
\caption{The subgraphs $A^\prime, B^\prime, C^\prime, D^\prime$ of the gem $G_2$.}\label{fig:G2a}
\end{figure}

\begin{remark}{\rm
The classification of PL $n$-manifolds by regular genus is a well-established problem in combinatorial topology. For orientable PL 4-manifolds, the classification is complete up to regular genus 5 (cf. \cite{ca99, fgg86, s99}), but it remains open for regular genus 6 and beyond. In \cite{B19}, two orientable and two non-orientable prime closed PL 4-manifolds with regular genus 6 were constructed, and it was conjectured that $\mathbb{S}^2 \times \mathbb{S}^1 \times \mathbb{S}^1$ also has regular genus 6. This problem remained open for several years. In this article, we settle this conjecture by proving that the regular genus of $\mathbb{S}^2 \times \mathbb{S}^1 \times \mathbb{S}^1$ is indeed 6. 
}
\end{remark}

In the following Theorem, we prove that the regular genus of the $4$-dimensional torus is $16$.

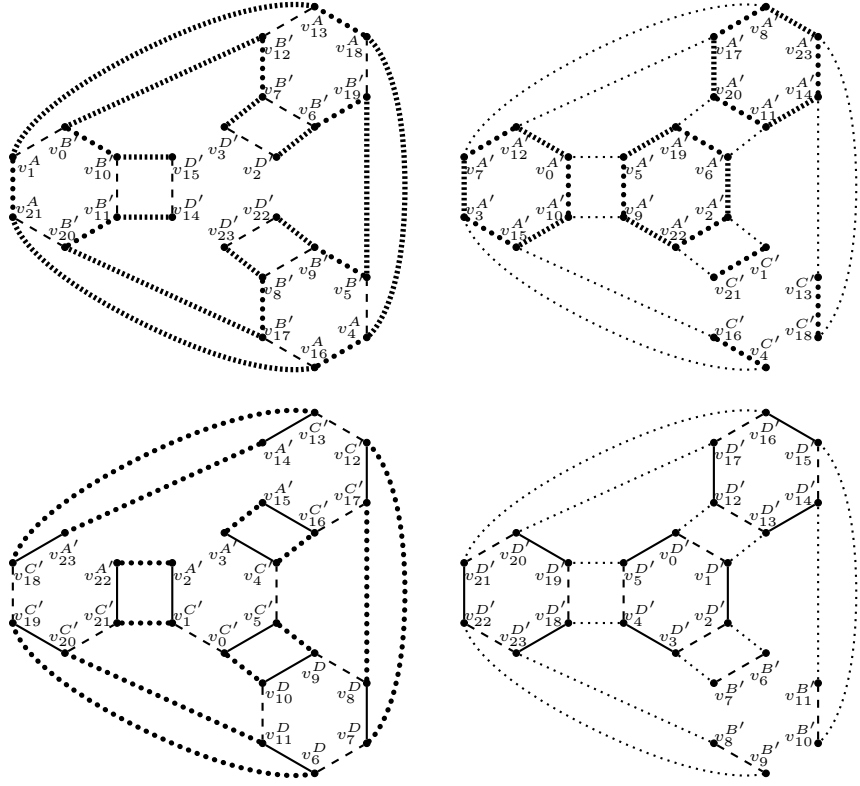
\begin{figure}[h!]
\tikzstyle{ver}=[]
\tikzstyle{verti}=[circle, draw, fill=black!100, inner sep=0pt, minimum width=2.5pt]
\tikzstyle{edge} = [draw,thick,-]
    \centering
\begin{tikzpicture}[scale=0.4]
\begin{scope}[shift={(15,-13.5)}]
\begin{scope}[shift={(0,0)},rotate=-30]
\foreach \x/\y in {120/0,180/5,240/4,300/3,0/2,60/1}
{\node[verti] (\y) at (\x:2){};}

\foreach \x/\y in {120/0,180/5,240/4,300/3,0/2,60/1}
{\node[ver] () at (\x:1.35){\tiny{$v_{\y}^{D^\prime}$}};}

\foreach \x/\y in 
{0/1,2/3,4/5}
{\path[edge,dashed] (\x)--(\y);}

\foreach \x/\y in 
{2/1,4/3,0/5}
{\path[edge] (\x)--(\y);}

\end{scope}

\begin{scope}[shift={(-5.3,0)},rotate=-30]
\foreach \x/\y in {120/20,180/21,240/22,300/23,0/18,60/19}
{\node[verti] (\y) at (\x:2){};}

\foreach \x/\y in {120/20,180/21,240/22,300/23,0/18,60/19}
{\node[ver] () at (\x:1.35){\tiny{$v_{\y}^{D^\prime}$}};}

\foreach \x/\y in 
{18/19,20/21,22/23}
{\path[edge,dashed] (\x)--(\y);}

\foreach \x/\y in 
{20/19,21/22,23/18}
{\path[edge] (\x)--(\y);}

\end{scope}

\begin{scope}[shift={(3,4)},rotate=-30]
\foreach \x/\y in {120/16,180/17,240/12,300/13,0/14,60/15}
{\node[verti] (\y) at (\x:2){};}

\foreach \x/\y in {120/16,180/17,240/12,300/13,0/14,60/15}
{\node[ver] () at (\x:1.35){\tiny{$v_{\y}^{D^\prime}$}};}

\foreach \x/\y in 
{12/13,14/15,16/17}
{\path[edge,dashed] (\x)--(\y);}

\foreach \x/\y in 
{13/14,15/16,17/12}
{\path[edge] (\x)--(\y);}

\end{scope}

\begin{scope}[shift={(3,-4)},rotate=-30]
\foreach \x/\y in {120/6,180/7,240/8,300/9,0/10,60/11}
{\node[verti] (\y) at (\x:2){};}

\foreach \x/\y in {120/6,180/7,240/8,300/9,0/10,60/11}
{\node[ver] () at (\x:1.35){\tiny{$v_{\y}^{B^\prime}$}};}

\foreach \x/\y in 
{6/7,8/9,10/11}
{\path[edge,dashed] (\x)--(\y);}

\end{scope}

\foreach \x/\y in 
{0/12,1/13,2/6,3/7,4/18,5/19,17/20,11/14,8/23}
{\path[edge,dotted] (\x)--(\y);}

\draw [edge,dotted] plot [smooth,tension=1.5] coordinates{(21) (-2.8,4.5) (16)};

\draw [edge,dotted] plot [smooth,tension=1.5] coordinates{(15) (6,0) (10)};

\draw [edge,dotted] plot [smooth,tension=1.5] coordinates{(22) (-2.8,-4.5) (9)};

\end{scope}

\begin{scope}[shift={(0,-13.5)}]
\begin{scope}[shift={(0,0)},rotate=-30]
\foreach \x/\y in {120/0,180/5,240/4,300/3,0/2,60/1}
{\node[verti] (\y) at (\x:2){};}

\foreach \x/\y in {240/1,300/0,0/5,60/4}
{\node[ver] () at (\x:1.35){\tiny{$v_{\y}^{C^\prime}$}};}

\foreach \x/\y in {120/3,180/2}
{\node[ver] () at (\x:1.35){\tiny{$v_{\y}^{A^\prime}$}};}

\foreach \x/\y in 
{0/1,2/3,4/5}
{\path[edge] (\x)--(\y);}

\foreach \x/\y in 
{2/1,4/3}
{\path[edge,dashed] (\x)--(\y);}

\end{scope}

\begin{scope}[shift={(-5.3,0)},rotate=-30]
\foreach \x/\y in {120/20,180/21,240/22,300/23,0/18,60/19}
{\node[verti] (\y) at (\x:2){};}

\foreach \x/\y in {180/18,240/19,300/20,0/21}
{\node[ver] () at (\x:1.35){\tiny{$v_{\y}^{C^\prime}$}};}

\foreach \x/\y in {120/23,60/22}
{\node[ver] () at (\x:1.35){\tiny{$v_{\y}^{A^\prime}$}};}

\foreach \x/\y in 
{18/19,20/21,22/23}
{\path[edge] (\x)--(\y);}

\foreach \x/\y in 
{21/22,23/18}
{\path[edge,dashed] (\x)--(\y);}

\end{scope}

\begin{scope}[shift={(3,4)},rotate=-30]
\foreach \x/\y in {120/16,180/17,240/12,300/13,0/14,60/15}
{\node[verti] (\y) at (\x:2){};}

\foreach \x/\y in {120/13,300/16,0/17,60/12}
{\node[ver] () at (\x:1.35){\tiny{$v_{\y}^{C^\prime}$}};}

\foreach \x/\y in {180/14,240/15}
{\node[ver] () at (\x:1.35){\tiny{$v_{\y}^{A^\prime}$}};}

\foreach \x/\y in 
{12/13,14/15,16/17}
{\path[edge] (\x)--(\y);}

\foreach \x/\y in 
{13/14,15/16}
{\path[edge,dashed] (\x)--(\y);}

\end{scope}

\begin{scope}[shift={(3,-4)},rotate=-30]
\foreach \x/\y in {120/6,180/7,240/8,300/9,0/10,60/11}
{\node[verti] (\y) at (\x:2){};}

\foreach \x/\y in {120/9,180/10,240/11,300/6,0/7,60/8}
{\node[ver] () at (\x:1.35){\tiny{$v_{\y}^{D}$}};}

\foreach \x/\y in 
{6/7,8/9,10/11}
{\path[edge] (\x)--(\y);}

\foreach \x/\y in 
{7/8,9/10,11/6}
{\path[edge,dashed] (\x)--(\y);}

\end{scope}

\foreach \x/\y in 
{0/12,1/13,2/6,3/7,4/18,5/19,17/20,11/14,8/23}
{\draw [line width=2pt, line cap=round, dash pattern=on 0pt off 1.7\pgflinewidth] (\x)--(\y);}

\draw[line width=2pt, line cap=round, dash pattern=on 0pt off 1.7\pgflinewidth] plot [smooth,tension=1.5] coordinates{(21) (-2.8,4.5) (16)};

\draw[line width=2pt, line cap=round, dash pattern=on 0pt off 1.7\pgflinewidth] plot [smooth,tension=1.5] coordinates{(15) (6,0) (10)};

\draw[line width=2pt, line cap=round, dash pattern=on 0pt off 1.7\pgflinewidth] plot[smooth,tension=1.5] coordinates{(22) (-2.8,-4.5) (9)};

\end{scope}

\begin{scope}[shift={(0,0)}]
\begin{scope}[shift={(0,0)},rotate=-30]
\foreach \x/\y in {120/0,180/5,240/4,300/3,0/2,60/1}
{\node[verti] (\y) at (\x:2){};}

\foreach \x/\y in {120/3,180/15,240/14,300/23,0/22,60/2}
{\node[ver] () at (\x:1.35){\tiny{$v_{\y}^{D^\prime}$}};}

\foreach \x/\y in 
{0/1,2/3,4/5}
{\path[edge,dashed] (\x)--(\y);}

\end{scope}

\begin{scope}[shift={(-5.3,0)},rotate=-30]
\foreach \x/\y in {120/20,180/21,240/22,300/23,0/18,60/19}
{\node[verti] (\y) at (\x:2){};}

\foreach \x/\y in {120/0,300/20,0/11,60/10}
{\node[ver] () at (\x:1.35){\tiny{$v_{\y}^{B^\prime}$}};}

\foreach \x/\y in {180/1,240/21}
{\node[ver] () at (\x:1.35){\tiny{$v_{\y}^{A}$}};}

\foreach \x/\y in 
{18/19,20/21,22/23}
{\path[edge,dashed] (\x)--(\y);}

\foreach \x/\y in 
{20/19,21/22,23/18}
{\draw[line width=2pt, line cap=round, dash pattern=on 0pt off 1.7\pgflinewidth] (\x)--(\y);}

\end{scope}

\begin{scope}[shift={(3,4)},rotate=-30]
\foreach \x/\y in {120/16,180/17,240/12,300/13,0/14,60/15}
{\node[verti] (\y) at (\x:2){};}

\foreach \x/\y in {180/12,240/7,300/6,0/19}
{\node[ver] () at (\x:1.35){\tiny{$v_{\y}^{B^\prime}$}};}

\foreach \x/\y in {120/13,60/18}
{\node[ver] () at (\x:1.35){\tiny{$v_{\y}^{A}$}};}

\foreach \x/\y in 
{12/13,14/15,16/17}
{\path[edge,dashed] (\x)--(\y);}

\foreach \x/\y in 
{13/14,15/16,17/12}
{\draw [line width=2pt, line cap=round, dash pattern=on 0pt off 1.7\pgflinewidth] (\x)--(\y);}

\end{scope}

\begin{scope}[shift={(3,-4)},rotate=-30]
\foreach \x/\y in {120/6,180/7,240/8,300/9,0/10,60/11}
{\node[verti] (\y) at (\x:2){};}

\foreach \x/\y in {120/9,180/8,240/17,60/5}
{\node[ver] () at (\x:1.35){\tiny{$v_{\y}^{B^\prime}$}};}

\foreach \x/\y in {300/16,0/4}
{\node[ver] () at (\x:1.35){\tiny{$v_{\y}^{A}$}};}

\foreach \x/\y in 
{6/7,8/9,10/11}
{\path[edge,dashed] (\x)--(\y);}

\foreach \x/\y in 
{7/8,9/10,11/6}
{\draw [line width=2pt, line cap=round, dash pattern=on 0pt off 1.7\pgflinewidth] (\x)--(\y);}

\end{scope}

\foreach \x/\y in 
{0/12,1/13,2/6,3/7,4/18,5/19,17/20,11/14,8/23}
{\draw [line width=2pt, line cap=rectangle, dash pattern=on 1pt off 1] (\x)--(\y);}

\draw[line width=2pt, line cap=rectangle, dash pattern=on 1pt off 1] plot [smooth,tension=1.5] coordinates{(21) (-2.8,4.5) (16)};

\draw[line width=2pt, line cap=rectangle, dash pattern=on 1pt off 1] plot [smooth,tension=1.5] coordinates{(15) (6,0) (10)};

\draw[line width=2pt, line cap=rectangle, dash pattern=on 1pt off 1] plot[smooth,tension=1.5] coordinates{(22) (-2.8,-4.5) (9)};
\end{scope}

\begin{scope}[shift={(15,0)}]
\begin{scope}[shift={(0,0)},rotate=-30]
\foreach \x/\y in {120/0,180/5,240/4,300/3,0/2,60/1}
{\node[verti] (\y) at (\x:2){};}

\foreach \x/\y in {120/19,180/5,240/9,300/22,0/2,60/6}
{\node[ver] () at (\x:1.35){\tiny{$v_{\y}^{A^\prime}$}};}

\foreach \x/\y in 
{0/1,2/3,4/5}
{\draw [line width=2pt, line cap=round, dash pattern=on 0pt off 1.7\pgflinewidth] (\x)--(\y);}

\foreach \x/\y in 
{2/1,4/3,0/5}
{\draw [line width=2pt, line cap=rectangle, dash pattern=on 1pt off 1] (\x)--(\y);}

\end{scope}

\begin{scope}[shift={(-5.3,0)},rotate=-30]
\foreach \x/\y in {120/20,180/21,240/22,300/23,0/18,60/19}
{\node[verti] (\y) at (\x:2){};}

\foreach \x/\y in {120/12,180/7,240/3,300/15,0/10,60/0}
{\node[ver] () at (\x:1.35){\tiny{$v_{\y}^{A^\prime}$}};}

\foreach \x/\y in 
{18/19,20/21,22/23}
{\draw[line width=2pt, line cap=round, dash pattern=on 0pt off 1.7\pgflinewidth] (\x)--(\y);}

\foreach \x/\y in 
{20/19,21/22,23/18}
{\draw [line width=2pt, line cap=rectangle, dash pattern=on 1pt off 1](\x)--(\y);}

\end{scope}

\begin{scope}[shift={(3,4)},rotate=-30]
\foreach \x/\y in {120/16,180/17,240/12,300/13,0/14,60/15}
{\node[verti] (\y) at (\x:2){};}

\foreach \x/\y in {120/8,180/17,240/20,300/11,0/14,60/23}
{\node[ver] () at (\x:1.35){\tiny{$v_{\y}^{A^\prime}$}};}

\foreach \x/\y in 
{12/13,14/15,16/17}
{\draw [line width=2pt, line cap=round, dash pattern=on 0pt off 1.7\pgflinewidth] (\x)--(\y);}

\foreach \x/\y in 
{13/14,15/16,17/12}
{\draw [line width=2pt, line cap=rectangle, dash pattern=on 1pt off 1] (\x)--(\y);}

\end{scope}

\begin{scope}[shift={(3,-4)},rotate=-30]
\foreach \x/\y in {120/6,180/7,240/8,300/9,0/10,60/11}
{\node[verti] (\y) at (\x:2){};}

\foreach \x/\y in {120/1,180/21,240/16,300/4,0/18,60/13}
{\node[ver] () at (\x:1.35){\tiny{$v_{\y}^{C^\prime}$}};}

\foreach \x/\y in 
{6/7,8/9,10/11}
{\draw [line width=2pt, line cap=round, dash pattern=on 0pt off 1.7\pgflinewidth] (\x)--(\y);}

\end{scope}

\foreach \x/\y in 
{0/12,1/13,2/6,3/7,4/18,5/19,17/20,11/14,8/23}
{\path [edge,dotted] (\x)--(\y);}

\draw[edge,dotted] plot [smooth,tension=1.5] coordinates{(21) (-2.8,4.5) (16)};

\draw[edge,dotted] plot [smooth,tension=1.5] coordinates{(15) (6,0) (10)};

\draw[edge,dotted] plot[smooth,tension=1.5] coordinates{(22) (-2.8,-4.5) (9)};
\end{scope}

\end{tikzpicture}
\caption{Subgraphs of the crystallization $G_2^1$.}\label{fig:G2b}
\end{figure}

\begin{theorem}\label{theorem:G2}
A crystallization of $\mathbb{S}^1 \times \mathbb{S}^1 \times \mathbb{S}^1 \times \mathbb{S}^1$ with $120$ vertices realizes the minimum possible regular genus, which is $16$.
\end{theorem}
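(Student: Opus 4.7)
The plan is to follow the same template that produced the genus-six crystallization in Theorem \ref{theorem:G1}, but now taking $M=\mathbb{S}^1\times\mathbb{S}^1\times\mathbb{S}^1$ with $\Gamma(0,1,2,3)$ chosen to be the standard $24$-vertex crystallization of the $3$-torus displayed in Figure \ref{fig:C2}. The associated gem $G$ of $M\times\mathbb{S}^1=\mathbb{T}^4$ from Figure \ref{fig:Gc} — which I will denote $G_2$ — then has $8\cdot 24 = 192$ vertices, partitioned into eight copies $A,B,C,D$ and $A',B',C',D'$ of $\Gamma$ in the roles described in Section 3.

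The first step is to apply polyhedral glue moves $(\Phi_i,\Lambda_i,\Lambda_i',i-1)$ for $i=1,2,3$ directed at suitable $2$-element subsets of vertices of $D',C',B'$, exactly mirroring the first stage of the proof of Theorem \ref{theorem:G1}, each of which cancels a pair of corresponding vertices in adjacent copies. The second step is to iterate the combined $2$-dipole/$3$-dipole \emph{observation move} from Subsection \ref{move} to absorb further pairs of corresponding vertices across the primed copies. Each such move removes exactly four vertices and preserves contractedness, so a total of eighteen such moves is required to bring the vertex count from $192$ down to $120$. The sequence must be arranged so that the surviving subgraphs inside $A',B',C',D'$ assemble into the configuration depicted in Figures \ref{fig:G2a} and \ref{fig:G2b}; call the resulting crystallization $G_2'$.

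With $G_2'$ in hand, I would read off the bi-colored cycle counts $g_{\{i,j\}}$ for all $\binom{5}{2}=10$ color pairs from the graph and select the cyclic permutation $\varepsilon=(0,2,4,1,3)$ as in the $3$-manifold case; the resulting computation should give
\[
\rho_\varepsilon(G_2') = 1-\tfrac{1}{2}\!\left(\sum_{i\in\mathbb{Z}_5} g_{\{\varepsilon_i,\varepsilon_{i+1}\}} - \tfrac{3}{2}\cdot 120\right) = 16.
\]
Finally, applying Proposition \ref{lbrg} with $\chi(\mathbb{T}^4)=0$ and $\mathrm{rk}(\pi_1(\mathbb{T}^4))=4$ yields the matching lower bound $\mathcal{G}(\mathbb{T}^4)\ge 2\cdot 0+5\cdot 4-4=16$, so $\mathcal{G}(\mathbb{T}^4)=\rho(G_2')=16$.

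The hard part will be the second step: one has to organize the eighteen dipole/observation moves so that at every intermediate stage the chosen sets $\Lambda_i$ and $\Lambda_i'$ genuinely lie in distinct components of the relevant $\Gamma_{\Delta_4\setminus S}$ (so that each move is legitimate and the graph remains a contracted gem), and so that the end configuration has precisely the bi-colored cycle structure required to realize the target value $16$. A miscount of any $g_{\{\varepsilon_i,\varepsilon_{i+1}\}}$ would push the regular genus strictly above the bound coming from Proposition \ref{lbrg}, leaving only an inequality rather than the desired equality.
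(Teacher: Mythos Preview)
Your high-level scaffold is the paper's: build $G_2$ from Figure~\ref{fig:Gc} with $|V(G_2)|=192$, reduce to a $120$-vertex crystallization by glue/observation moves, read off $\rho_\varepsilon=16$ for $\varepsilon=(0,2,4,1,3)$, and match the lower bound from Proposition~\ref{lbrg}. But several details are off, and the part you flag as ``hard'' is in fact the entire content of the argument.

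First, the initial three polyhedral glue moves are \emph{not} on $2$-element sets. For the $24$-vertex $T^3$ crystallization the paper takes each $\Lambda_i$ to be a whole $6$-vertex hexagon inside $D',C',B'$ (e.g.\ $\Lambda_1=\{v_6^{D'},\dots,v_{11}^{D'}\}$), so each glue move removes $12$ vertices and the three together give $192\to 156$. The scaling with $|V(\Gamma)|$ is essential: a direct transplant of the $2$-element $\Lambda_i$ from Theorem~\ref{theorem:G1} will not make $G_2$ contracted. After that, only \emph{nine} observation moves (three each for the color pairs $\{0,1\},\{1,2\},\{0,2\}$) are used, giving $156\to132\to120$; the total is $12$ moves, not $18$. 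Your target figures are also wrong: Figures~\ref{fig:G2a}--\ref{fig:G2b} show the starting and the $156$-vertex stages, while the final crystallization $G_2'$ is Figure~\ref{fig:G2d}.

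Second, and more importantly, you have not produced a proof. The paper lists every $\Lambda_i,\Lambda_i'$ explicitly, checks the component conditions, and then verifies from the resulting graph that every bi-colored cycle for the pairs $\{0,2\},\{2,4\},\{1,4\},\{1,3\},\{0,3\}$ has length $4$, hence $g_{\{i,j\}}=30$ for each and $\rho_\varepsilon(G_2')=16$. Saying that one ``should'' obtain these counts after an unspecified sequence of moves is not a proof of the upper bound; until the moves are written down and the cycle structure checked, all you have established is the lower bound $\mathcal G(\mathbb T^4)\ge 16$.
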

\begin{proof}
    For constructing this crystallization, we take $\Gamma(0,1,2,3)$ to be the $24$-vertex crystallization of $\mathbb{S}^1 \times \mathbb{S}^1 \times \mathbb{S}^1$ in Figure \ref{fig:C2}. So, $G$ is a gem of $\mathbb{S}^1 \times \mathbb{S}^1 \times \mathbb{S}^1 \times \mathbb{S}^1$ with $192$ vertices (Figure \ref{fig:Gc}). We denote this gem by $G_2$. Since we will apply crystallization moves involving vertices of $A^\prime, B^\prime, C^\prime, D^\prime$, we present only $A^\prime, B^\prime, C^\prime, D^\prime$ in Figures \ref{fig:G2a}, \ref{fig:G2b} and \ref{fig:G2c}. Let us apply three polyhedral glue moves with respect to $(\Phi_i,\Lambda_i,\Lambda^\prime_i,i-1)$ on $G_2$ to obtain a crystallization, say $G_2^1$, of $\mathbb{S}^1 \times \mathbb{S}^1 \times \mathbb{S}^1 \times \mathbb{S}^1$ with $156$ vertices, for $1\le i\le 3$. The sets $\Lambda_1,\Lambda_2$ and $\Lambda_3$ are $\{v_6^{D^\prime},v_7^{D^\prime},v_8^{D^\prime},v_9^{D^\prime},v_{10}^{D^\prime},v_{11}^{D^\prime}\}, \{v_3^{C^\prime},v_2^{C^\prime},v_{22}^{C^\prime},v_{23}^{C^\prime},v_{14}^{C^\prime},v_{15}^{C^\prime}\}$ and $  \{v_1^{B^\prime},v_{13}^{B^\prime},v_{18}^{B^\prime},v_4^{B^\prime},v_{16}^{B^\prime},v_{21}^{B^\prime}\}$, respectively. The isomorphism $\Phi_i$ maps $\Lambda_i$ to the set of vertices that are adjacent to the vertices of $\Lambda_i$ via edges of color $i-1$ in $G_2$, for $1 \le i \le 3$. Thus, the sets $\Lambda_1^\prime,\Lambda_2^\prime$ and $\Lambda_3^\prime$ are $\{v_6^{C^\prime},v_7^{C^\prime},v_8^{C^\prime},v_9^{C^\prime},v_{10}^{C^\prime},v_{11}^{C^\prime}\}, \{v_3^{B^\prime},v_2^{B^\prime},v_{22}^{B^\prime},v_{23}^{B^\prime},v_{14}^{B^\prime},v_{15}^{B^\prime}\}$ and $\{v_1^{A^\prime},v_{13}^{A^\prime},v_{18}^{A^\prime},v_4^{A^\prime},v_{16}^{A^\prime}, v_{21}^{A^\prime}\}$, respectively. Figure \ref{fig:G2b} together with Figure \ref{fig:Gc} represent the crystallization $G_2^1$ with $156$ vertices. 
    
    Now, because of the observation in Subsection \ref{move}, we can apply three moves with respect to $(\Phi_i,\Lambda_i,\Lambda^\prime_i,\{0,1\})$, for $4\le i\le 6$ and three moves with respect to $(\Phi_i,\Lambda_i,\Lambda^\prime_i,\{1,2\})$, for $7\le i\le 9$ on $G_2^1$. The sets $\Lambda_4,\Lambda_5,\Lambda_6$ are $\{v_2^{D^\prime},v_3^{D^\prime}\}, \{v_{14}^{D^\prime},v_{15}^{D^\prime}\}, \{v_{22}^{D^\prime},v_{23}^{D^\prime}\}$, respectively, and the sets $\Lambda_4^\prime,\Lambda_5^\prime,\Lambda_6^\prime$ are $\{v_6^{B^\prime},v_7^{B^\prime}\}, \{v_{11}^{B^\prime},v_{10}^{B^\prime}\}, \{v_{9}^{B^\prime},v_{8}^{B^\prime}\}$, respectively. The sets $\Lambda_7,\Lambda_8,\Lambda_9$ are $\{v_1^{C^\prime},v_{21}^{C^\prime}\}, \{v_{13}^{C^\prime},v_{18}^{C^\prime}\}, \{v_{4}^{C^\prime},v_{16}^{C^\prime}\}$, respectively, and the sets $\Lambda_7^\prime,\Lambda_8^\prime,\Lambda_9^\prime$ are $\{v_2^{A^\prime},v_{22}^{A^\prime}\}, \{v_{14}^{A^\prime},v_{23}^{A^\prime}\},\\\{v_{3}^{A^\prime},v_{15}^{A^\prime}\}$, respectively. After applying these moves on the crystallization $G_2^1$, we get another crystallization of $\mathbb{S}^1\times \mathbb{S}^1\times \mathbb{S}^1\times \mathbb{S}^1$ with $132$ vertices, say $G_2^2$ (Figure \ref{fig:G2c}). Let the sets $\Lambda_{10}, \Lambda_{11}, \Lambda_{12}$ be $\{v_1^{D^\prime},v_{13}^{D^\prime}\}, \{v_{4}^{D^\prime},v_{18}^{D^\prime}\}, \{v_{16}^{D^\prime},v_{21}^{D^\prime}\}$, respectively, and the sets $\Lambda_{10}^\prime,\Lambda_{11}^\prime,\Lambda_{12}^\prime$ be $\{v_6^{A^\prime},v_{11}^{A^\prime}\}, \{v_{7}^{A^\prime},v_{8}^{A^\prime}\}, \{v_{10}^{A^\prime},v_{9}^{A^\prime}\}$, respectively. Since $\Lambda_i,\Lambda_i^\prime$ satisfy the conditions of the observation in Subsection \ref{move}, we apply the move with respect to $(\Phi_i,\Lambda_i, \Lambda_i^\prime,\{0,2\})$ on $G_2^2$, for $10\le i\le 12$. Therefore, we get the crystallization $G_2^\prime$ with $120$ vertices of $\mathbb{S}^1\times \mathbb{S}^1\times \mathbb{S}^1\times \mathbb{S}^1$ (Figure \ref{fig:G2d}). The subgraph of $G_2^\prime$ generated by the remaining vertices of $A^\prime, B^\prime, C^\prime, D^\prime$ is isomorphic to $\Gamma(0,1,2,\_)$. 

    In $\Gamma(0,1,2,3)$, $g_{\{i,j\}}=4$ and all the cycles colored by $\{i,j\}$ are of length $6$, when $\{i,j\}\in \{\{2,3\},\{1,2\},\{0,1\},\{0,3\}\}$. All the cycles colored by $\{i,j\}$ are of length $4$ and thus, $g_{\{i,j\}}=6$ in $\Gamma(0,1,2,3)$, when $\{i,j\}\in \{\{0,2\},\{1,3\}\}$. Using this, we get from Figure \ref{fig:G2d} that in $G_2^\prime$, all the cycles colored by $\{i,j\}$ are of length $4$ and thus, $g_{\{i,j\}}=30$, when $\{i,j\}\in \{\{0,2\},\{2,4\},\{1,4\},\{1,3\},\{0,3\}\}$. The regular genus of $G_2^\prime$ with respect to the permutation $\varepsilon=(0,2,4,1,3)$ is 
    $$\rho_{\varepsilon}(G_2^\prime)=1-\frac{1}{2}\left(\frac{-3}{2}(120)+g_{\{0,2\}}+g_{\{2,4\}}+g_{\{1,4\}}+g_{\{1,3\}}+g_{\{0,3\}}\right)$$ $$=1-\frac{1}{2}(-180+30+30+30+30+30)=16.$$
    Therefore, we have that $\mathcal G(\mathbb{S}^1\times \mathbb{S}^1\times \mathbb{S}^1\times \mathbb{S}^1)\le \rho(G_2^\prime)\le \rho_{\varepsilon}(G_2^\prime)=16.$ But from Proposition \ref{lbrg}, we have that $\mathcal G(\mathbb{S}^1\times \mathbb{S}^1\times \mathbb{S}^1\times \mathbb{S}^1)\ge 16$. Hence, we get that $\mathcal G(\mathbb{S}^1\times \mathbb{S}^1\times \mathbb{S}^1\times \mathbb{S}^1)= \rho_\varepsilon(G_2^\prime)=16$.
\end{proof}

The isomorphism signature of the crystallization $G_2^\prime$ of $\mathbb{S}^1\times \mathbb{S}^1\times \mathbb{S}^1 \times \mathbb{S}^1$ obtained using Regina is the following.

\noindent 
-c4bvvvvLLwMvvwwzwvzzwwAzLMzzQzvLLzvAzLwwMMPALMLPzMQQQQQzMQQLQQ\\QMQQQQPAvzzwMQvwQQPQAQQQQAMQQQQQQQQQQAPQQlaiamajaqawaxaBavaH\\aIaLazaRaKaSaMaTaDaYaVaOaFa2a0a5a6a9adb8aeb+afbkbhbabobmb4agbib5aibpbqbvb8a\\wb9axbsbdbAbcbfbybjbpbjbbbcbubcb-abb-albebabxbnblbgbnb4aybDbFb6a4abbjbabCbCbE\\bgbEbqbdbmbqbubnbgbrbfbrbzbbbzb6aubIbNbObPbKbSbQbMbPbQbVbXbUbUbWbWbIb\\IbMbJbJbRbRbMbVbJbYbNbYbMbPbXbRbLbKbKbNbVbObLbPbTbQbLb1b1bTbSbQbS\\bTbWbUbZbZbObIb0bTb0bLbXbZb0bKbJbYbVb1bUb3b3b2b2b2b3b2b3baaaaaaaaaaaaaaa\\aaaaaaaaaaaaaaaaaaaaaaaaaaaaaaaaaaaaaaaaaaaaaaaaaaaaaaaaaaaaaaaaaaaaaaaaaaaaaaaaa\\aaaaaaaaaaaaaaaaaaaaaaaaaaaaaaaaaaaaaaaaaaaaaaaaaaaaaaaaaaaaaaaaaaaaaaaaaaaaaaaaa\\aaaaaaaaaaaaaaaaaaaaaaaaaaaaaaaaaaaaaaaaaaaaaaaaaaaaaaaaaaaaaaaaaaaaaaaaaaaaaaaaa\\aaaaaaaaaaaaaaaaaaaaaaaaaaaaaaaaaaaaaaaaaaaaaaaaaaaaaaaaaaaaaaaaaaaaaaaaaaaaaaaaa\\aaaaaaaaaaaaaaaaaaaaaaa

\begin{figure}[h!]
\tikzstyle{ver}=[]
\tikzstyle{verti}=[circle, draw, fill=black!100, inner sep=0pt, minimum width=2.5pt]
\tikzstyle{edge} = [draw,thick,-]
    \centering
\begin{tikzpicture}[scale=0.39]
\begin{scope}[shift={(15,-13.5)}]
\begin{scope}[shift={(0,0)},rotate=-30]
\foreach \x/\y in {120/0,180/5,240/4,300/3,0/2,60/1}
{\node[verti] (\y) at (\x:2){};}

\foreach \x/\y in {120/0,180/5,240/4,60/1}
{\node[ver] () at (\x:1.3){\tiny{$v_{\y}^{D^\prime}$}};}

\foreach \x/\y in {300/7,0/6}
{\node[ver] () at (\x:1.3){\tiny{$v_{\y}^{A^\prime}$}};}

\foreach \x/\y in 
{0/1,4/5}
{\path[edge,dashed] (\x)--(\y);}

\foreach \x/\y in 
{2/1,4/3,0/5}
{\path[edge] (\x)--(\y);}

\end{scope}

\begin{scope}[shift={(-5.3,0)},rotate=-30]
\foreach \x/\y in {120/20,180/21,240/22,300/23,0/18,60/19}
{\node[verti] (\y) at (\x:2){};}

\foreach \x/\y in {120/20,180/21,0/18,60/19}
{\node[ver] () at (\x:1.3){\tiny{$v_{\y}^{D^\prime}$}};}

\foreach \x/\y in {240/9,300/8}
{\node[ver] () at (\x:1.3){\tiny{$v_{\y}^{A^\prime}$}};}

\foreach \x/\y in 
{18/19,20/21}
{\path[edge,dashed] (\x)--(\y);}

\foreach \x/\y in 
{20/19,21/22,23/18}
{\path[edge] (\x)--(\y);}

\end{scope}

\begin{scope}[shift={(3,4)},rotate=-30]
\foreach \x/\y in {120/16,180/17,240/12,300/13,0/14,60/15}
{\node[verti] (\y) at (\x:2){};}

\foreach \x/\y in {120/16,180/17,240/12,300/13}
{\node[ver] () at (\x:1.3){\tiny{$v_{\y}^{D^\prime}$}};}

\foreach \x/\y in {0/11,60/10}
{\node[ver] () at (\x:1.3){\tiny{$v_{\y}^{A^\prime}$}};}

\foreach \x/\y in 
{12/13,16/17}
{\path[edge,dashed] (\x)--(\y);}

\foreach \x/\y in 
{13/14,15/16,17/12}
{\path[edge] (\x)--(\y);}

\end{scope}

\foreach \x/\y in 
{0/12,1/13,4/18,5/19,17/20}
{\path[edge,dotted] (\x)--(\y);}

\draw [edge,dotted] plot [smooth,tension=1.5] coordinates{(21) (-2.8,4.5) (16)};

\foreach \x/\y in 
{3/23,2/14}
{\path [edge,dotted] (\x)--(\y);}

\draw[edge,dotted] plot [smooth,tension=1.5] coordinates{(22) (2,-3) (15)};

\end{scope}

\begin{scope}[shift={(0,-13.5)}]
\begin{scope}[shift={(0,0)},rotate=-30]
\foreach \x/\y in {240/4,300/3,0/2,60/1}
{\node[verti] (\y) at (\x:2){};}

\foreach \x/\y in {300/0,0/5}
{\node[ver] () at (\x:1.3){\tiny{$v_{\y}^{C^\prime}$}};}

\foreach \x/\y in {240/2,60/3}
{\node[ver] () at (\x:1.3){\tiny{$v_{\y}^{A}$}};}

\path[edge] (2)--(3);

\foreach \x/\y in 
{2/1,4/3}
{\path[edge,dashed] (\x)--(\y);}

\end{scope}

\begin{scope}[shift={(-5.3,0)},rotate=-30]
\foreach \x/\y in {180/21,240/22,300/23,0/18}
{\node[verti] (\y) at (\x:2){};}

\foreach \x/\y in {240/19,300/20}
{\node[ver] () at (\x:1.3){\tiny{$v_{\y}^{C^\prime}$}};}

\foreach \x/\y in {180/23,0/22}
{\node[ver] () at (\x:1.3){\tiny{$v_{\y}^{A}$}};}

\path[edge] (22)--(23);

\foreach \x/\y in 
{21/22,23/18}
{\path[edge,dashed] (\x)--(\y);}

\end{scope}

\begin{scope}[shift={(3,4)},rotate=-30]
\foreach \x/\y in {120/16,300/13,0/14,60/15}
{\node[verti] (\y) at (\x:2){};}

\foreach \x/\y in {0/17,60/12}
{\node[ver] () at (\x:1.3){\tiny{$v_{\y}^{C^\prime}$}};}

\foreach \x/\y in {120/14,300/15}
{\node[ver] () at (\x:1.3){\tiny{$v_{\y}^{A}$}};}

\path[edge] (14)--(15);

\foreach \x/\y in 
{13/14,15/16}
{\path[edge,dashed] (\x)--(\y);}

\end{scope}

\begin{scope}[shift={(3,-4)},rotate=-30]
\foreach \x/\y in {120/6,180/7,240/8,300/9,0/10,60/11}
{\node[verti] (\y) at (\x:2){};}

\foreach \x/\y in {120/9,180/10,240/11,300/6,0/7,60/8}
{\node[ver] () at (\x:1.3){\tiny{$v_{\y}^{D}$}};}

\foreach \x/\y in 
{6/7,8/9,10/11}
{\path[edge] (\x)--(\y);}

\foreach \x/\y in 
{7/8,9/10,11/6}
{\path[edge,dashed] (\x)--(\y);}

\end{scope}

\foreach \x/\y in 
{1/13,2/6,3/7,4/18,11/14,8/23}
{\draw [line width=2pt, line cap=round, dash pattern=on 0pt off 1.7\pgflinewidth] (\x)--(\y);}

\draw[line width=2pt, line cap=round, dash pattern=on 0pt off 1.7\pgflinewidth] plot [smooth,tension=1.5] coordinates{(21) (-2.8,4.5) (16)};

\draw[line width=2pt, line cap=round, dash pattern=on 0pt off 1.7\pgflinewidth] plot [smooth,tension=1.5] coordinates{(15) (6,0) (10)};

\draw[line width=2pt, line cap=round, dash pattern=on 0pt off 1.7\pgflinewidth] plot[smooth,tension=1.5] coordinates{(22) (-2.8,-4.5) (9)};

\end{scope}

\begin{scope}[shift={(0,0)}]

\begin{scope}[shift={(-5.3,0)},rotate=-30]
\foreach \x/\y in {120/20,180/21,240/22,300/23,0/18,60/19}
{\node[verti] (\y) at (\x:2){};}

\foreach \x/\y in {120/0,300/20}
{\node[ver] () at (\x:1.3){\tiny{$v_{\y}^{B^\prime}$}};}

\foreach \x/\y in {0/14,60/15}
{\node[ver] () at (\x:1.3){\tiny{$v_{\y}^{D}$}};}

\foreach \x/\y in {180/1,240/21}
{\node[ver] () at (\x:1.3){\tiny{$v_{\y}^{A}$}};}

\foreach \x/\y in 
{18/19,20/21,22/23}
{\path[edge,dashed] (\x)--(\y);}

\foreach \x/\y in 
{20/19,21/22,23/18}
{\draw[line width=2pt, line cap=round, dash pattern=on 0pt off 1.7\pgflinewidth] (\x)--(\y);}

\end{scope}

\begin{scope}[shift={(3,4)},rotate=-30]
\foreach \x/\y in {120/16,180/17,240/12,300/13,0/14,60/15}
{\node[verti] (\y) at (\x:2){};}

\foreach \x/\y in {180/12,0/19}
{\node[ver] () at (\x:1.3){\tiny{$v_{\y}^{B^\prime}$}};}

\foreach \x/\y in {240/3,300/2}
{\node[ver] () at (\x:1.3){\tiny{$v_{\y}^{D}$}};}

\foreach \x/\y in {120/13,60/18}
{\node[ver] () at (\x:1.3){\tiny{$v_{\y}^{A}$}};}

\foreach \x/\y in 
{12/13,14/15,16/17}
{\path[edge,dashed] (\x)--(\y);}

\foreach \x/\y in 
{13/14,15/16,17/12}
{\draw [line width=2pt, line cap=round, dash pattern=on 0pt off 1.7\pgflinewidth] (\x)--(\y);}

\end{scope}

\begin{scope}[shift={(3,-4)},rotate=-30]
\foreach \x/\y in {120/6,180/7,240/8,300/9,0/10,60/11}
{\node[verti] (\y) at (\x:2){};}

\foreach \x/\y in {240/17,60/5}
{\node[ver] () at (\x:1.3){\tiny{$v_{\y}^{B^\prime}$}};}

\foreach \x/\y in {120/22,180/23}
{\node[ver] () at (\x:1.3){\tiny{$v_{\y}^{D}$}};}

\foreach \x/\y in {300/16,0/4}
{\node[ver] () at (\x:1.3){\tiny{$v_{\y}^{A}$}};}

\foreach \x/\y in 
{6/7,8/9,10/11}
{\path[edge,dashed] (\x)--(\y);}

\foreach \x/\y in 
{7/8,9/10,11/6}
{\draw [line width=2pt, line cap=round, dash pattern=on 0pt off 1.7\pgflinewidth] (\x)--(\y);}

\end{scope}

\foreach \x/\y in 
{17/20,11/14,8/23}
{\draw [line width=2pt, line cap=rectangle, dash pattern=on 1pt off 1] (\x)--(\y);}

\draw[line width=2pt, line cap=rectangle, dash pattern=on 1pt off 1] plot [smooth,tension=1.5] coordinates{(21) (-2.8,4.5) (16)};

\draw[line width=2pt, line cap=rectangle, dash pattern=on 1pt off 1] plot [smooth,tension=1.5] coordinates{(15) (6,0) (10)};

\draw[line width=2pt, line cap=rectangle, dash pattern=on 1pt off 1] plot[smooth,tension=1.5] coordinates{(22) (-2.8,-4.5) (9)};
\end{scope}

\begin{scope}[shift={(15,0)}]
\begin{scope}[shift={(0,0)},rotate=-30]
\foreach \x/\y in {120/0,180/5,240/4,300/3,0/2,60/1}
{\node[verti] (\y) at (\x:2){};}

\foreach \x/\y in {120/19,180/5,240/9,60/6}
{\node[ver] () at (\x:1.3){\tiny{$v_{\y}^{A^\prime}$}};}

\foreach \x/\y in {300/21,0/1}
{\node[ver] () at (\x:1.3){\tiny{$v_{\y}^{D^\prime}$}};}

\foreach \x/\y in 
{0/1,4/5}
{\draw [line width=2pt, line cap=round, dash pattern=on 0pt off 1.7\pgflinewidth] (\x)--(\y);}

\foreach \x/\y in 
{2/1,4/3,0/5}
{\draw [line width=2pt, line cap=rectangle, dash pattern=on 1pt off 1] (\x)--(\y);}

\end{scope}

\begin{scope}[shift={(-5.3,0)},rotate=-30]
\foreach \x/\y in {120/20,180/21,240/22,300/23,0/18,60/19}
{\node[verti] (\y) at (\x:2){};}

\foreach \x/\y in {120/12,180/7,0/10,60/0}
{\node[ver] () at (\x:1.3){\tiny{$v_{\y}^{A^\prime}$}};}

\foreach \x/\y in {240/4,300/16}
{\node[ver] () at (\x:1.3){\tiny{$v_{\y}^{D^\prime}$}};}

\foreach \x/\y in 
{18/19,20/21}
{\draw[line width=2pt, line cap=round, dash pattern=on 0pt off 1.7\pgflinewidth] (\x)--(\y);}

\foreach \x/\y in 
{20/19,21/22,23/18}
{\draw [line width=2pt, line cap=rectangle, dash pattern=on 1pt off 1](\x)--(\y);}

\end{scope}

\begin{scope}[shift={(3,4)},rotate=-30]
\foreach \x/\y in {120/16,180/17,240/12,300/13,0/14,60/15}
{\node[verti] (\y) at (\x:2){};}

\foreach \x/\y in {120/8,180/17,240/20,300/11}
{\node[ver] () at (\x:1.3){\tiny{$v_{\y}^{A^\prime}$}};}

\foreach \x/\y in {0/13,60/18}
{\node[ver] () at (\x:1.3){\tiny{$v_{\y}^{D^\prime}$}};}

\foreach \x/\y in 
{12/13,16/17}
{\draw [line width=2pt, line cap=round, dash pattern=on 0pt off 1.7\pgflinewidth] (\x)--(\y);}

\foreach \x/\y in 
{13/14,15/16,17/12}
{\draw [line width=2pt, line cap=rectangle, dash pattern=on 1pt off 1] (\x)--(\y);}

\end{scope}

\foreach \x/\y in 
{0/12,1/13,4/18,5/19,17/20}
{\path [edge,dotted] (\x)--(\y);}

\draw[edge,dotted] plot [smooth,tension=1.5] coordinates{(21) (-2.8,4.5) (16)};

\foreach \x/\y in 
{3/23,2/14}
{\path [edge,dotted] (\x)--(\y);}

\draw[edge,dotted] plot [smooth,tension=1.5] coordinates{(22) (2,-3) (15)};

\end{scope}

\end{tikzpicture}
\caption{Subgraphs of the crystallization $G_2^2$.}\label{fig:G2c}
\end{figure}

 \begin{figure}[h!]
\tikzstyle{ver}=[]
\tikzstyle{verti}=[circle, draw, fill=black!100, inner sep=0pt, minimum width=2.5pt]
\tikzstyle{edge} = [draw,thick,-]
    \centering
\begin{tikzpicture}[scale=0.4]
\begin{scope}[shift={(15,0)}]
\begin{scope}[shift={(0,0)},rotate=-30]
\foreach \x/\y in {120/0,180/5,240/4,300/3,0/2,60/1}
{\node[verti] (\y) at (\x:2){};}

\foreach \x/\y in {300/5,0/19}
{\node[ver] () at (\x:1.35){\tiny{$v_{\y}^{B^\prime}$}};}

\foreach \x/\y in {240/5,60/19}
{\node[ver] () at (\x:1.35){\tiny{$v_{\y}^{C^\prime}$}};}

\foreach \x/\y in {120/19,180/5}
{\node[ver] () at (\x:1.35){\tiny{$v_{\y}^{D^\prime}$}};}

\foreach \x/\y in 
{0/1,2/3,4/5}
{\draw [line width=2pt, line cap=rectangle, dash pattern=on 1pt off 1] (\x)--(\y);}

\foreach \x/\y in 
{2/1,4/3,0/5}
{\path[edge,dotted] (\x)--(\y);}

\end{scope}

\begin{scope}[shift={(-5.3,0)},rotate=-30]
\foreach \x/\y in {120/20,180/21,240/22,300/23,0/18,60/19}
{\node[verti] (\y) at (\x:2){};}

\foreach \x/\y in {240/12,300/0}
{\node[ver] () at (\x:1.35){\tiny{$v_{\y}^{B^\prime}$}};}

\foreach \x/\y in {180/12,0/0}
{\node[ver] () at (\x:1.35){\tiny{$v_{\y}^{C^\prime}$}};}

\foreach \x/\y in {120/12,60/0}
{\node[ver] () at (\x:1.35){\tiny{$v_{\y}^{D^\prime}$}};}

\foreach \x/\y in 
{18/19,20/21,22/23}
{\draw  [line width=2pt, line cap=rectangle, dash pattern=on 1pt off 1.5]  (\x)--(\y);}

\foreach \x/\y in 
{20/19,21/22,23/18}
{\path[edge,dotted](\x)--(\y);}

\end{scope}

\begin{scope}[shift={(3,4)},rotate=-30]
\foreach \x/\y in {120/16,180/17,240/12,300/13,0/14,60/15}
{\node[verti] (\y) at (\x:2){};}

\foreach \x/\y in {0/20,60/17}
{\node[ver] () at (\x:1.35){\tiny{$v_{\y}^{B^\prime}$}};}

\foreach \x/\y in {120/17,300/20}
{\node[ver] () at (\x:1.35){\tiny{$v_{\y}^{C^\prime}$}};}

\foreach \x/\y in {180/17,240/20}
{\node[ver] () at (\x:1.35){\tiny{$v_{\y}^{D^\prime}$}};}

\foreach \x/\y in 
{12/13,14/15,16/17}
{\draw [line width=2pt, line cap=rectangle, dash pattern=on 1pt off 1] (\x)--(\y);}

\foreach \x/\y in 
{13/14,15/16,17/12}
{\path [edge,dotted] (\x)--(\y);}

\end{scope}

\begin{scope}[shift={(3,-4)},rotate=-30]
\foreach \x/\y in {120/6,180/7,240/8,300/9,0/10,60/11}
{\node[verti] (\y) at (\x:2){};}

\foreach \x/\y in {120/19,180/5,240/0,300/12,0/17,60/20}
{\node[ver] () at (\x:1.35){\tiny{$v_{\y}^{A^\prime}$}};}

\foreach \x/\y in 
{7/8,9/10,11/6}
{\path [edge,dotted] (\x)--(\y);}

\foreach \x/\y in 
{6/7,8/9,10/11}
{\draw [line width=2pt, line cap=rectangle, dash pattern=on 1pt off 1] (\x)--(\y);}

\end{scope}

\foreach \x/\y in 
{0/12,1/13,2/6,3/7,4/18,5/19,17/20,11/14,8/23}
{\path [edge] (\x)--(\y);}

\draw[edge] plot [smooth,tension=1.5] coordinates{(21) (-2.8,4.5) (16)};

\draw[edge] plot [smooth,tension=1.5] coordinates{(15) (6,0) (10)};

\draw[edge] plot[smooth,tension=1.5] coordinates{(22) (-2.8,-4.5) (9)};
\end{scope}

\begin{scope}[shift={(0,0)}]

\path[edge] (-1,7)--(-1,-6);
\path[edge] (-7,7)--(-7,-6);
\path[edge] (5,7)--(5,-6);
\path[edge] (-7,7)--(5,7);
\path[edge] (-7,-6)--(5,-6);

 \node[ver] (1) at (-6,6){\tiny{$v_{19}^{D^\prime}$}}; 
 \node[ver] (2) at (-6,5){\tiny{$v_{19}^{C^\prime}$}};
 \node[ver] (3) at (-6,4){\tiny{$v_{19}^{B^\prime}$}};
 \node[ver] (4) at (-6,3){\tiny{$v_{5}^{B^\prime}$}};
 \node[ver] (5) at (-6,2){\tiny{$v_{5}^{C^\prime}$}};
 \node[ver] (6) at (-6,1){\tiny{$v_{5}^{D^\prime}$}};
 \node[ver] (7) at (-6,0){\tiny{$v_{0}^{D^\prime}$}};
 \node[ver] (8) at (-6,-1){\tiny{$v_{0}^{C^\prime}$}};
 \node[ver] (9) at (-6,-2){\tiny{$v_{0}^{B^\prime}$}};
 \node[ver] (10) at (-6,-3){\tiny{$v_{12}^{B^\prime}$}};
 \node[ver] (11) at (-6,-4){\tiny{$v_{12}^{C^\prime}$}};
 \node[ver] (12) at (-6,-5){\tiny{$v_{12}^{D^\prime}$}};

 \node[ver] (1') at (-2,6){\tiny{$v_{8}^{A}$}}; 
 \node[ver] (2') at (-2,5){\tiny{$v_{23}^{A}$}};
 \node[ver] (3') at (-2,4){\tiny{$v_{18}^{A}$}};
 \node[ver] (4') at (-2,3){\tiny{$v_{4}^{A}$}};
 \node[ver] (5') at (-2,2){\tiny{$v_{3}^{A}$}};
 \node[ver] (6') at (-2,1){\tiny{$v_{7}^{A}$}};
 \node[ver] (7') at (-2,0){\tiny{$v_{6}^{A}$}};
 \node[ver] (8') at (-2,-1){\tiny{$v_{2}^{A}$}};
 \node[ver] (9') at (-2,-2){\tiny{$v_{1}^{A}$}};
 \node[ver] (10') at (-2,-3){\tiny{$v_{13}^{A}$}};
 \node[ver] (11') at (-2,-4){\tiny{$v_{14}^{A}$}};
 \node[ver] (12') at (-2,-5){\tiny{$v_{11}^{A}$}};

 \node[ver] (13) at (0,6){\tiny{$v_{20}^{B^\prime}$}};
 \node[ver] (14) at (0,5){\tiny{$v_{20}^{C^\prime}$}};
 \node[ver] (15) at (0,4){\tiny{$v_{20}^{D^\prime}$}};
 \node[ver] (16) at (0,3){\tiny{$v_{17}^{D^\prime}$}};
 \node[ver] (17) at (0,2){\tiny{$v_{17}^{C^\prime}$}};
 \node[ver] (18) at (0,1){\tiny{$v_{17}^{B^\prime}$}};
 \node[ver] (19) at (0,0){\tiny{$v_{19}^{A^\prime}$}};
 \node[ver] (20) at (0,-1){\tiny{$v_{20}^{A^\prime}$}};
 \node[ver] (21) at (0,-2){\tiny{$v_{17}^{A^\prime}$}};
 \node[ver] (22) at (0,-3){\tiny{$v_{12}^{A^\prime}$}};
 \node[ver] (23) at (0,-4){\tiny{$v_{0}^{A^\prime}$}};
 \node[ver] (24) at (0,-5){\tiny{$v_{5}^{A^\prime}$}};

  \node[ver] (13') at (4,6){\tiny{$v_{21}^{A}$}};
 \node[ver] (14') at (4,5){\tiny{$v_{22}^{A}$}};
 \node[ver] (15') at (4,4){\tiny{$v_{9}^{A}$}};
 \node[ver] (16') at (4,3){\tiny{$v_{10}^{A}$}};
 \node[ver] (17') at (4,2){\tiny{$v_{15}^{A}$}};
 \node[ver] (18') at (4,1){\tiny{$v_{16}^{A}$}};
 \node[ver] (19') at (4,0){\tiny{$v_{19}^{A}$}};
 \node[ver] (20') at (4,-1){\tiny{$v_{20}^{A}$}};
 \node[ver] (21') at (4,-2){\tiny{$v_{17}^{A}$}};
 \node[ver] (22') at (4,-3){\tiny{$v_{12}^{A}$}};
 \node[ver] (23') at (4,-4){\tiny{$v_{0}^{A}$}};
 \node[ver] (24') at (4,-5){\tiny{$v_{5}^{A}$}};

 \foreach \x/\y in
{1/1',2/2',3/3',4/4',5/5',6/6',7/7',8/8',9/9',10/10',11/11',12/12',13/13',14/14',15/15',16/16',17/17',18/18',19/19',20/20',21/21',22/22',23/23',24/24'}
 {\path[edge,dashed] (\x)--(\y);}
 \end{scope}

\begin{scope}[shift={(-5,9)}]
   
\node[ver] (D) at (21,0){$\Gamma(\_,1,2,3)$};
\node[ver] (C) at (14,0){$\Gamma(4,\_,2,3)$};
\node[ver] (B) at (7,0){$\Gamma(4,0,\_,3)$};
\node[ver] (A) at (0,0){$\Gamma(4,0,1,\_)$};
 
\node[ver] () at (21,1){$D$};
\node[ver] () at (14,1){$C$};
\node[ver] () at (7,1){$B$};
\node[ver] () at (0,1){$A$};

\path[edge,dotted] (B)--(C);
\path[edge] (A)--(B);

\draw[line width=2pt, line cap=rectangle, dash pattern=on 1pt off 1] (C) -- (D);
\end{scope}

\draw [line width=2pt, line cap=round, dash pattern=on 0pt off 1.7\pgflinewidth] (D)--(16.1,7.2);

\end{tikzpicture}
\caption{Crystallization $G_2^\prime$ of $\mathbb{S}^1\times \mathbb{S}^1\times \mathbb{S}^1\times \mathbb{S}^1$ with $120$ vertices.}\label{fig:G2d}
\end{figure}
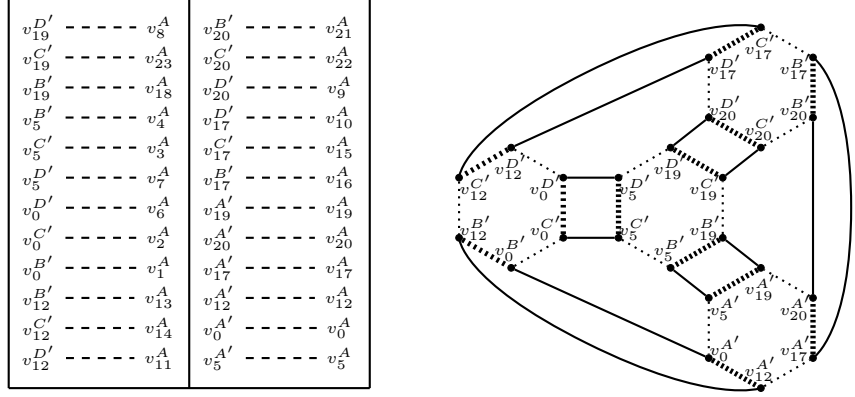

 Consider the $2$-cube. Let us represent it by $(a_1^0,a_2^1,a_3^1,a_4^2)$. Here $a_3^1=a_1^0\times I$ and $a_4^2=a_2^1\times I$, where $I=[0,1]$. By $a_i^j$, we mean that it is the $i^{\text{th}}$ vertex of the cube and it is colored by the color $j$. Considering the $2$-cube $\times I$, we represent the $3$-cube by $(a_1^0,a_2^1,a_3^1,a_4^2,a_5^1,a_6^2,a_7^2,a_8^3)$, where $a_i^j=a_{i-4}^{j-1}\times I$ for $5\le i\le 8$ and $1\le j\le 3$. Therefore, we represent the $(n+1)$-cube by $(a_1^0,a_2^1,\cdots,a_{2^n}^n,a_{2^n+1}^1,a_{2^n+2}^2,\cdots,a_{2^{n+1}}^{n+1})$ for $n\ge 1$. Consider the triangulation of the $(n+1)$-cube that consists of the $(n+1)$-simplices whose vertices form a path of length $n+1$ from $a_1^0$ to $a_{2^{n+1}}^{n+1}$. This triangulation contains as many $(n+1)$-simplices as many paths of length $n+1$ from $a_1^0$ to $a_{2^{n+1}}^{n+1}$. Thus, the number of $(n+1)$-simplices is $(n+1)!$ in this triangulation of the $(n+1)$-cube. Now, consider the disjoint link of the vertex $a_{2^{n+1}}^{n+1}$ in this colored triangulation of the $(n+1)$-cube. The disjoint link of $a_{2^{n+1}}^{n+1}$ is a colored triangulation of the $n$-ball, and the number of $n$-simplices in this colored triangulation is $(n+1)!$. Since this is a colored triangulation, we can obtain an $n$-regular colored graph that is dual to this colored triangulation of the $n$-ball (cf. Subsection \ref{crystal}). Let us denote this $n$-regular colored graph representing the $n$-ball by $\Gamma$. Now, we will introduce $0$-colored edges to the graph $\Gamma$ to make it an $(n+1)$-regular colored graph. Join two vertices $v_i$ and $v_j$ of $\Gamma$ by a $0$-colored edge if these are connected via a path $n,n-1,n-2,\cdots,1,2,3,\cdots,n$ of the colored edges in $\Gamma$. Thus, we get an $(n+1)$-regular colored graph, say $\Gamma^\prime$. For $1\le n\le 4$, we get that the graph $\Gamma^\prime$ represents $\mathbb{S}^1 \times \mathbb{S}^1 \times \cdots \times \mathbb{S}^1$ ($n$ times). For $n=4$, we verified it using Regina. The isomorphism signature of $\Gamma^\prime$ is the same as the isomorphism signature of $G_2^\prime$ (cf. Theorem \ref{theorem:G2}). It follows from the construction of $\Gamma^\prime$ that for $n\ge 4$ and even (resp. odd), the length of a bi-colored cycle colored by any two consecutive colors in the cyclic permutation $(0,2,4,\cdots, n,1,3,5,\cdots,n-1)$ (resp. $(0,2,4,\cdots,n-1,1,n,n-2,n-4,\cdots,3)$) is $4$. We also have that any bi-colored cycle of $\Gamma^\prime$ is of length $4$ or $6$. Hence, the regular genus of $\Gamma^\prime$ is $1+\frac{(n+1)! \  (n-3)}{8}$ for $n\ge 4$. This lead us to the following conjecture.  

\begin{conj}\label{conj}
    The $(n+1)$-regular colored graph $\Gamma^\prime$ with $(n+1)!$ vertices is a genus-minimal crystallization of the $n$-dimensional torus $\mathbb{S}^1 \times \mathbb{S}^1 \times \cdots \times \mathbb{S}^1$ ($n$ times) with regular genus $1+\frac{(n+1)! \  (n-3)}{8}$, for $n\ge 5$.
\end{conj}

\section{Genus-minimal crystallizations of small covers over $\Delta^2 \times \Delta^2$}

In this section, we will study all the small covers over the simple polytope $\Delta^2 \times \Delta^2$ up to D-J equivalence. We construct colored triangulations of these small covers such that the regular genera of the dual colored graphs attain the lower bounds for the regular genera of these small covers presented in \cite{bc17}. 

We will denote the simple polytope $\Delta^2 \times \Delta^2$ by $P$, one of the triangles by $A=[a_0,a_1,a_2]$, the other by $B=[b_0,b_1,b_2]$ and the vertex $(a_i,b_j)\in \Delta^2 \times \Delta^2=A \times B$ by $v_{i+j}^j$, where $i,j \in \{0,1,2\}$. Let the set of all the $3$-faces of $P$ be $$\mathcal{F}(P)=\{F_1=(v_0^0,v_1^0,v_2^0,v_1^1,v_2^1,v_3^1), F_2=(v_0^0,v_1^0,v_2^0,v_2^2,v_3^2,v_4^2), F_3=(v_0^0,v_1^1,v_2^2,v_1^0,v_2^1,v_3^2), $$ $$F_4=(v_0^0,v_1^1,v_2^2,v_2^0,v_3^1,v_4^2), F_5=(v_1^1,v_2^1,v_3^1,v_2^2,v_3^2,v_4^2), F_6=(v_1^0,v_2^1,v_3^2,v_2^0, v_3^1,v_4^2)\}.$$ The following lemma gives all the $\mathbb{Z}_2$-characteristic functions on the set $\mathcal{F}(P)$.

\begin{lemma}\label{lemma:seven}
    There are seven small covers over the simple polytope $P=\Delta^2\times \Delta^2$ up to D-J equivalence of which one is $\mathbb{RP}^2\times \mathbb{RP}^2$, and others are $\mathbb{RP}^2$-bundles over $\mathbb{RP}^2$.
\end{lemma}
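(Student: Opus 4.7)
The plan is to enumerate all $\mathbb{Z}_2$-characteristic functions $\lambda:\mathcal{F}(P)\to\mathbb{Z}_2^4$ up to the $GL_4(\mathbb{Z}_2)$-action and then interpret each class geometrically. Split the six facets of $P=\Delta^2\times\Delta^2$ into the three of ``first-factor type'' (of the form $E\times\Delta^2$ for an edge $E$ of the first $\Delta^2$) and the three of ``second-factor type'', and write their characteristic vectors as $\alpha_1,\alpha_2,\alpha_3$ and $\beta_1,\beta_2,\beta_3$ respectively. Each vertex of $P$ lies on exactly two first-factor and two second-factor facets, and as the pair of edges through a vertex of $\Delta^2$ ranges over all $\binom{3}{2}$ pairs, the characteristic-function axiom of Subsection~\ref{smallcover} becomes: for every $i\ne j$ and every $k\ne l$ in $\{1,2,3\}$, the $4$-tuple $\{\alpha_i,\alpha_j,\beta_k,\beta_l\}$ is a basis of $\mathbb{Z}_2^4$. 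Pairwise independence of the $\alpha$'s (resp.\ $\beta$'s) follows, so $V:=\langle\alpha_1,\alpha_2,\alpha_3\rangle$ and $W:=\langle\beta_1,\beta_2,\beta_3\rangle$ each have dimension $2$ or $3$.

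I would split into three cases according to $(\dim V,\dim W)$. If $\dim V=\dim W=2$, the basis axiom forces $V\oplus W=\mathbb{Z}_2^4$; after normalizing $V=\langle e_1,e_2\rangle,W=\langle e_3,e_4\rangle$, the residual $GL_2(\mathbb{Z}_2)\times GL_2(\mathbb{Z}_2)$-action is simply transitive on pairs of orderings of the nonzero vectors of $V$ and $W$, giving a single D-J class, which the product formula in Subsection~\ref{smallcover} identifies with $\mathbb{RP}^2\times\mathbb{RP}^2$. If $\dim V=3$ and $\dim W=2$, then $\{\alpha_1,\alpha_2,\alpha_3\}$ is a basis of $V$; the requirement $V'_{ij}+W=\mathbb{Z}_2^4$ for each 2-subspace $V'_{ij}:=\langle\alpha_i,\alpha_j\rangle$ pins down $V\cap W=\langle w_0\rangle$ with $w_0=\alpha_1+\alpha_2+\alpha_3$, since $w_0$ is the unique nonzero vector of $V$ lying in none of the $V'_{ij}$. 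Normalizing $\alpha_i=e_i$ and $W=\langle w_0,e_4\rangle$, the $GL$-stabilizer of $V$ preserving $W$ is the order-$2$ group generated by $e_4\mapsto e_4+w_0$, which acts fixed-point freely on the $3!=6$ orderings of the nonzero vectors of $W$, yielding $6/2=3$ D-J classes. The symmetric case $\dim V=2,\dim W=3$ contributes three more classes, not D-J equivalent to the previous since the dimensions of $V,W$ are $GL$-invariants.

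The remaining case $\dim V=\dim W=3$ must be ruled out. Here $V+W=\mathbb{Z}_2^4$ forces $\dim U=2$ for $U:=V\cap W$; let $k,l$ count the $\alpha$'s, $\beta$'s lying in $U$, so $k,l\le 2$. If $k=2$ (or $l=2$ by symmetry) then some $V'_{ij}=U\subset W$, so $V'_{ij}\cap W'_{kl}\ne 0$ for every 2-subspace $W'_{kl}\subset W$, violating the basis axiom. For $k,l\in\{0,1\}$, one tabulates the $1$-subspaces $V'_{ij}\cap U$ (which equal $\langle\alpha_i+\alpha_j\rangle$ when $\alpha_i,\alpha_j\notin U$, and $\langle\alpha_a\rangle$ when only $\alpha_a\in U$) and likewise $W'_{kl}\cap U$; in every subcase, either the three subspaces on one side exhaust all three $1$-subspaces of $U$ or the two sides share at least one common $1$-subspace, producing a pair $(V'_{ij},W'_{kl})$ with $V'_{ij}\cap U=W'_{kl}\cap U$ and hence $V'_{ij}\cap W'_{kl}\ne 0$. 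So this case is empty, and the total count is $1+3+3=7$.

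Finally I would identify the six non-product classes as $\mathbb{RP}^2$-bundles over $\mathbb{RP}^2$. In each such class, the factor whose characteristic vectors span a $2$-dimensional subspace $W\subset\mathbb{Z}_2^4$ induces a well-defined quotient characteristic function on the opposite $\Delta^2$ producing $\mathbb{RP}^2$, and the projection $\Delta^2\times\Delta^2\to\Delta^2$ onto that factor lifts to a map $M^4(\lambda)\to\mathbb{RP}^2$ whose fiber over each point is a small cover of the other $\Delta^2$, i.e.\ a copy of $\mathbb{RP}^2$; this exhibits $M^4(\lambda)$ as an $\mathbb{RP}^2$-bundle over $\mathbb{RP}^2$, and the three twists in each of Cases $(3,2)$, $(2,3)$ correspond to the three distinct ways of gluing the fiber along the $\pi_1\cong\mathbb{Z}_2$-monodromy of the base. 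The main obstacle is this last step: extracting the explicit bundle structure from the normalized characteristic data and verifying that each of the six twisted bundles is non-trivial (i.e.\ not homeomorphic to $\mathbb{RP}^2\times\mathbb{RP}^2$). I would complete that verification by computing a bundle invariant such as a suitable Stiefel-Whitney class directly from the normalized characteristic vectors, the non-vanishing coming precisely from the failure of $\lambda$ to split along a product decomposition of $\mathbb{Z}_2^4$ into the $V$-summand and $W$-summand.
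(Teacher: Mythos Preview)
Your enumeration is correct and takes a genuinely different route from the paper. The paper fixes one vertex $v_0^0$ of $P$, normalizes the four facets through it to $\lambda(F_i)=e_i$, and then directly determines the admissible values $\lambda(F_5)=(1,1,c_3,c_4)$, $\lambda(F_6)=(c_1,c_2,1,1)$ subject to the constraint that $(c_1,c_2)\ne(0,0)$ forces $(c_3,c_4)=(0,0)$; this yields seven explicit representatives. Your approach via the $GL_4(\mathbb{Z}_2)$-invariant pair $(\dim V,\dim W)$ is more structural: the $(2,2)$ case gives the product, the two mixed cases each give three classes, and the $(3,3)$ case is empty. Both arguments are short; the paper's has the advantage of producing normal forms that feed directly into the crystallization constructions later on, while yours makes the bundle structure (Case $(3,2)$ vs.\ $(2,3)$) visible from the outset and would generalize more readily to other products of simplices.

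Your elimination of the $(3,3)$ case, though terse, checks out: in each subcase the two families $\{V'_{ij}\cap U\}$ and $\{W'_{kl}\cap U\}$ of $1$-subspaces of $U$ must overlap, contradicting the basis axiom. The one point where you overreach is the final paragraph. The lemma as stated does \emph{not} assert that the six non-product classes are non-trivial bundles, and the paper's proof does not claim or prove this either; it merely observes that each is an $\mathbb{RP}^2$-bundle over $\mathbb{RP}^2$ (and concludes non-orientability). Indeed, later in the paper it is shown that several of these seven D-J classes are PL-homeomorphic to one another, so distinguishing them topologically is a separate and more delicate matter. You may therefore drop the Stiefel--Whitney computation entirely; exhibiting the bundle projection $M^4(\lambda)\to\mathbb{RP}^2$ from the $2$-dimensional factor, as you outlined, already suffices for the lemma.
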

 \begin{proof}
   Let $\lambda:\mathcal F(P) \to \mathbb{Z}_2^4$ be a $\mathbb{Z}_2$-characteristic function. Since the vertex $v_0^0=\cap_{i=1}^4 F_i$, the set $\{\lambda(F_i)\ |\ 1\le i\le 4\}$ is a basis of $\mathbb{Z}_2^4$. Let $\lambda(F_i)=e_i$, for $1\le i\le 4$. Considering the vertices $v_1^0$ and $v_2^0$, we get that $\lambda(F_6)= (c_1,c_2,1,1),$ 
   where $c_1,c_2\in \{0,1\}$. Similarly, when we consider vertices $v_1^1$ and $v_2^2$, then we get $\lambda(F_5)=(1,1,c_3,c_4)$, where $c_3,c_4\in \{0,1\}$. Now, we claim that if $c_k=1$ for some $k\in \{1,2\}$, then $c_3=c_4=0$. To prove this, let us first assume that $c_1=c_3=1$. Considering the vertex $v_4^2$, we get that $\{e_2,e_4,\lambda(F_5),\lambda(F_6)\}$ is a basis of $\mathbb{Z}_2^4$. But since $c_1=c_3=1$, we have that $\lambda(F_6)\in \text{span}\{e_2,e_4,\lambda(F_5)\}$, which implies that $c_3=0$.
   Now, if $c_1=c_4=1$, considering the vertex $v_3^2$, we again get a contradiction. Therefore, when $c_1=1$, we get that $c_3=c_4=0$. Similarly, considering the vertices $v^1_2$ and $v^1_3$, we get that $c_3=c_4=0$ when $c_2=1$. This proves the above claim. Therefore, there are seven $\mathbb{Z}_2$-characteristic functions on $\mathcal F(P)$ corresponding to a fixed basis of $\mathbb{Z}_2^4$, i.e., there are seven small covers over the simple polytope $P$ up to D-J equivalence (cf. Subsection \ref{smallcover}). Let $\lambda_1$ be the $\mathbb{Z}_2$-characteristic function on $\mathcal F(P)$ such that $\lambda_1(F_5)=(1,1,0,0)$ and $\lambda_1(F_6)=(0,0,1,1)$. Let $\lambda_2, \lambda_3, \lambda_4$ be $\mathbb{Z}_2$-characteristic functions on $\mathcal F(P)$ such that $\lambda_i(F_5)=(1,1,0,0)$ and $\lambda_2(F_6)=(1,1,1,1),\ \lambda_3(F_6)=(1,0,1,1),\ \lambda_4(F_6)=(0,1,1,1)$, for $2\le i\le 4$. Also, let $\lambda_5, \lambda_6, \lambda_7$ be $\mathbb{Z}_2$-characteristic functions on $\mathcal F(P)$ such that  $\lambda_5(F_5)=(1,1,1,1),\ \lambda_6(F_5)=(1,1,1,0),\ \lambda_7(F_5)=(1,1,0,1)$ and $\lambda_i(F_6)=(0,0,1,1)$, for $5\le i\le 7$. Then, it follows from Subsection \ref{smallcover} that $M^4(\lambda_1)$ is $\mathbb{RP}^2\times \mathbb{RP}^2$. For each of the other six $\mathbb{Z}_2$-characteristic functions, the corresponding small cover is an $\mathbb{RP}^2$-bundle over $\mathbb{RP}^2$. Thus, all the small covers over the simple polytope $P$ are non-orientable.
\end{proof}

\begin{figure}[ht]
\tikzstyle{ver}=[]
\tikzstyle{edge} = [draw,thick,-]
\centering
\begin{tikzpicture}[scale=1]

\node[ver] () at (-6,5){$t_w^1=[v_0^0,v_1^0,v_2^0,v_3^1,v_4^2]_w$}; 
\node[ver] () at (-6,4.4){$0$};
\node[ver] () at (-3,4.4){$1$};
\node[ver] () at (0,4.4){$2$};
\node[ver] () at (3,4.4){$3$};
\node[ver] () at (6,4.4){$4$};

\node[ver] () at (-6,3.9){\footnotesize{$(c_1,c_2,1,1)$}};
\node[ver] () at (-3,3.9){\footnotesize{$e_4$}};
\node[ver] () at (3,3.9){\footnotesize{$e_2$}};
\node[ver] () at (6,3.9){\footnotesize{$e_1$}};

\node[ver] () at (-6,3){$t_w^2=[v_0^0,v_1^0,v_2^1,v_3^1,v_4^2]_w$}; 
\node[ver] () at (-6,2.4){$0$};
\node[ver] () at (-3,2.4){$1$};
\node[ver] () at (0,2.4){$2$};
\node[ver] () at (3,2.4){$3$};
\node[ver] () at (6,2.4){$4$};

\node[ver] () at (-6,1.9){\footnotesize{$(c_1,c_2,1,1)$}};
\node[ver] () at (6,1.9){\footnotesize{$e_1$}};

\node[ver] () at (-7,1.1){$t_w^4=[v_0^0,v_1^1,v_2^1,v_3^1,v_4^2]_w$}; 
\node[ver] () at (-7,0.4){$0$};
\node[ver] () at (-5.6,0.4){$1$};
\node[ver] () at (-4.2,0.4){$2$};
\node[ver] () at (-2.8,0.4){$3$};
\node[ver] () at (-1.4,0.4){$4$};

\node[ver] () at (-7,-0.1){\footnotesize{$(1,1,c_3,c_4)$}};
\node[ver] () at (-4.2,-0.1){\footnotesize{$e_4$}};
\node[ver] () at (-1.4,-0.1){\footnotesize{$e_1$}};

\node[ver] () at (2,1.1){$t_w^3=[v_0^0,v_1^0,v_2^1,v_3^2,v_4^2]_w$}; 
\node[ver] () at (7,0.4){$4$};
\node[ver] () at (5.6,0.4){$3$};
\node[ver] () at (4.2,0.4){$2$};
\node[ver] () at (2.8,0.4){$1$};
\node[ver] () at (1.4,0.4){$0$};

\node[ver] () at (7,-0.1){\footnotesize{$e_3$}};
\node[ver] () at (4.2,-0.1){\footnotesize{$e_2$}};
\node[ver] () at (1.4,-0.1){\footnotesize{$(c_1,c_2,1,1)$}};

\node[ver] () at (-6,-1){$t_w^5=[v_0^0,v_1^1,v_2^1,v_3^2,v_4^2]_w$}; 
\node[ver] () at (-6,-1.6){$4$};
\node[ver] () at (-3,-1.6){$3$};
\node[ver] () at (0,-1.6){$2$};
\node[ver] () at (3,-1.6){$1$};
\node[ver] () at (6,-1.6){$0$};

\node[ver] () at (-6,-2.1){\footnotesize{$e_3$}};
\node[ver] () at (6,-2.1){\footnotesize{$(1,1,c_3,c_4)$}};

\node[ver] () at (-6,-3){$t_w^6=[v_0^0,v_1^1,v_2^2,v_3^2,v_4^2]_w$}; 
\node[ver] () at (-6,-3.6){$4$};
\node[ver] () at (-3,-3.6){$3$};
\node[ver] () at (0,-3.6){$2$};
\node[ver] () at (3,-3.6){$1$};
\node[ver] () at (6,-3.6){$0$};

\node[ver] () at (-6,-4.1){\footnotesize{$e_3$}};
\node[ver] () at (-3,-4.1){\footnotesize{$e_4$}};
\node[ver] () at (3,-4.1){\footnotesize{$e_2$}};
\node[ver] () at (6,-4.1){\footnotesize{$(1,1,c_3,c_4)$}};

\path[edge] (0,4.1)--(0,2.7);
\path[edge] (-3,2.1)--(-5.6,0.7);
\path[edge] (3,2.1)--(5.6,0.7);
\path[edge] (-2.8,0.1)--(-3,-1.3);
\path[edge] (2.8,0.1)--(3,-1.3);
\path[edge] (0,-1.9)--(0,-3.3);

\path[edge] (-9,3.65)--(7.2,3.65);
\path[edge] (-9,1.65)--(7.2,1.65);
\path[edge] (-9,-0.35)--(7.2,-0.35);
\path[edge] (-9,-2.35)--(7.2,-2.35);
\path[edge] (-0.6,1.65)--(-0.6,-0.35);

\node[ver] () at (0.8,3.4){\scriptsize{identified}};
\node[ver] () at (-3.4,1.4){\scriptsize{identified}};
\node[ver] () at (5.4,1.4){\scriptsize{identified}};
\node[ver] () at (3.6,-0.6){\scriptsize{identified}};
\node[ver] () at (-2.2,-0.6){\scriptsize{identified}};
\node[ver] () at (0.8,-2.6){\scriptsize{identified}};

\end{tikzpicture}
\caption{$t_w^j$ with all its $3$-faces and their $\mathbb Z_2$-characteristic vectors, for $1\le j\le 6$. }\label{fig:T}
\end{figure}
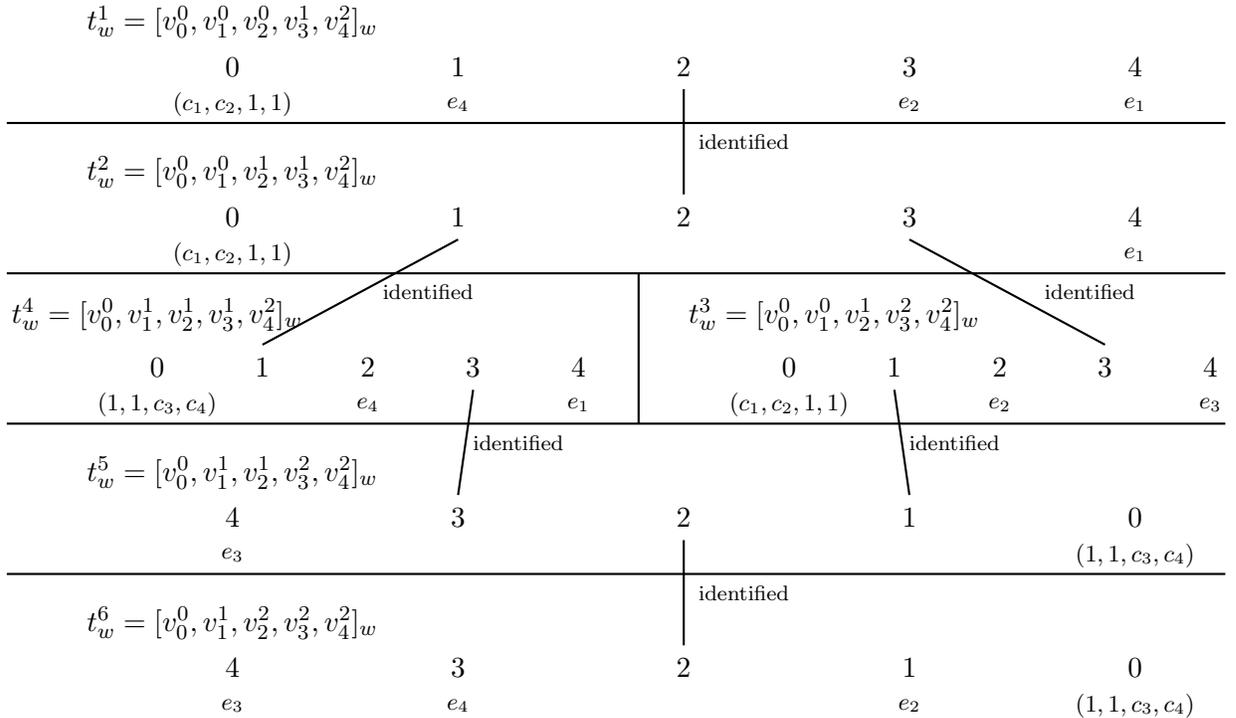

\begin{figure}[h!]  
    \tikzstyle{ver}=[]
\tikzstyle{verti}=[circle, draw, fill=black!100, inner sep=0pt, minimum width=3pt]
\tikzstyle{vert}=[circle, draw, fill=black!100, inner sep=0pt, minimum width=1pt]
\tikzstyle{edge} = [draw,thick,-]
\centering

\begin{tikzpicture}[scale=0.4]

\begin{scope}[shift={(-16,16)}]
\foreach \x/\y/\z in
{1.5/1.5/1,1.5/-1.5/2,-1.5/-1.5/3,-1.5/1.5/4,3/3/5,3/-3/6,-3/-3/7,-3/3/8,4.5/4.5/9,4.5/-4.5/10,-4.5/-4.5/11,-4.5/4.5/12,6/6/13,6/-6/14,-6/-6/15,-6/6/16}
{\node[verti] (a\z) at (\x,\y){};}

\foreach \x/\y/\z in
{1.1/2/0,1.1/-2/34,-1.1/-2/134,-1.1/2/1,2.6/3.5/4,2.6/-3.5/3,-2.6/-3.5/13,-2.6/3.5/14,4.1/5/24,4.1/-5/23,-4.1/-5/123,-4.1/5/124,5.6/6.5/2,5.6/-6.5/234,-5.6/-6.5/1234,-5.6/6.5/12}
{\node[ver] () at (\x,\y){\tiny{$T_{\z}^1$}};}

\foreach \x/\y in 
{5/9,6/10,7/11,8/12}
{\path[edge,dashed] (a\x)--(a\y);}

\foreach \x/\y in 
{1/4,2/3,6/7,8/5,10/11,12/9,14/15,16/13}
{\draw [line width=2pt, line cap=round, dash pattern=on 0pt off 1.7\pgflinewidth]  (a\x) -- (a\y);}

\foreach \x/\y in 
{1/2,4/3,6/5,8/7,10/9,12/11,14/13,16/15}
{\draw[line width=2pt, line cap=rectangle, dash pattern=on 1pt off 1]
(a\x) -- (a\y);}

\foreach \x/\y in 
{5/1,6/2,7/3,8/4,9/13,10/14,11/15,12/16}
{\path[edge,dotted] (a\x)--(a\y);}

\draw[edge,dashed] plot [smooth,tension=0.5] coordinates{(a1) (4.2,2.5) (a13)};

\draw[edge,dashed] plot [smooth,tension=0.5] coordinates{(a2) (4.2,-2.5) (a14)};

\draw[edge,dashed] plot [smooth,tension=0.5] coordinates{(a3) (-4.2,-2.5) (a15)};

\draw[edge,dashed] plot [smooth,tension=0.5] coordinates{(a4) (-4.2,2.5) (a16)};

\end{scope}

\begin{scope}[shift={(-6,16)}]
\foreach \x/\y/\z in
{-1/1/1,1/1/2,-1/0/3,1/0/4,-1/-1/5,1/-1/6}
{\node[vert] (\z) at (\x,\y){};}

\foreach \x/\y/\z in
{-1.5/1.5/1,1.5/1.5/2,-1.5/0/3,1.5/0/4,-1.5/-1.5/5,1.5/-1.5/6}
{\node[ver] () at (\x,\y){\tiny{$T_w^{\z}$}};}

\foreach \x/\y in
{1/2,5/6}
{\path[edge] (\x)--(\y);}

\foreach \x/\y in
{4/2,5/3}
{\path[edge,dotted] (\x)--(\y);}

\foreach \x/\y in
{3/2,5/4}
{\path[edge,dashed] (\x)--(\y);}

\end{scope}

\begin{scope}[shift={(4,16)}]

\begin{scope}[shift={(4,4)}]
\foreach \x/\y/\z in
{1.5/1.5/1,1.5/-1.5/2,-1.5/-1.5/3,-1.5/1.5/4}
{\node[verti] (b1\z) at (\x,\y){};}

\foreach \x/\y/\z in
{1/2/0,1.1/-2/34,-1.1/-2/134,-1.1/2/1}
{\node[ver] () at (\x,\y){\tiny{$T_{\z}^2$}};}

\foreach \x/\y in 
{1/4,2/3}
{\draw [line width=2pt, line cap=round, dash pattern=on 0pt off 1.7\pgflinewidth]  (b1\x) -- (b1\y);}

\foreach \x/\y in 
{1/2,4/3}
{\draw[line width=2pt, line cap=rectangle, dash pattern=on 1pt off 1]
(b1\x) -- (b1\y);}

\end{scope}

\begin{scope}[shift={(4,-2)}]
\foreach \x/\y/\z in
{1.5/1.5/1,1.5/-1.5/2,-1.5/-1.5/3,-1.5/1.5/4}
{\node[verti] (b2\z) at (\x,\y){};}

\foreach \x/\y/\z in
{1/2/4,1.1/-2/3,-1.1/-2/13,-1.1/2/14}
{\node[ver] () at (\x,\y){\tiny{$T_{\z}^2$}};}

\foreach \x/\y in 
{1/4,2/3}
{\draw [line width=2pt, line cap=round, dash pattern=on 0pt off 1.7\pgflinewidth]  (b2\x) -- (b2\y);}

\foreach \x/\y in 
{1/2,4/3}
{\draw[line width=2pt, line cap=rectangle, dash pattern=on 1pt off 1]
(b2\x) -- (b2\y);}
\end{scope}

\begin{scope}[shift={(-4,-2)}]
\foreach \x/\y/\z in
{1.5/1.5/1,1.5/-1.5/2,-1.5/-1.5/3,-1.5/1.5/4}
{\node[verti] (b3\z) at (\x,\y){};}

\foreach \x/\y/\z in
{1.1/2/24,1.1/-2/23,-1.1/-2/123,-1.1/2/124}
{\node[ver] () at (\x,\y){\tiny{$T_{\z}^2$}};}

\foreach \x/\y in 
{1/4,2/3}
{\draw [line width=2pt, line cap=round, dash pattern=on 0pt off 1.7\pgflinewidth]  (b3\x) -- (b3\y);}

\foreach \x/\y in 
{1/2,4/3}
{\draw[line width=2pt, line cap=rectangle, dash pattern=on 1pt off 1]
(b3\x) -- (b3\y);}
\end{scope}

\begin{scope}[shift={(-4,4)}]
\foreach \x/\y/\z in
{1.5/1.5/1,1.5/-1.5/2,-1.5/-1.5/3,-1.5/1.5/4}
{\node[verti] (b4\z) at (\x,\y){};}

\foreach \x/\y/\z in
{1/2/2,1.1/-2/234,-1.1/-2/1234,-1.1/2/12}
{\node[ver] () at (\x,\y){\tiny{$T_{\z}^2$}};}

\foreach \x/\y in 
{1/4,2/3}
{\draw [line width=2pt, line cap=round, dash pattern=on 0pt off 1.7\pgflinewidth]  (b4\x) -- (b4\y);}

\foreach \x/\y in 
{1/2,4/3}
{\draw[line width=2pt, line cap=rectangle, dash pattern=on 1pt off 1]
(b4\x) -- (b4\y);}
\end{scope}

\end{scope}

\begin{scope}[shift={(-6,5)}]
\begin{scope}[shift={(-15,0)}]
\foreach \x/\y/\z in
{1.5/1.5/1,1.5/-1.5/2,-1.5/-1.5/3,-1.5/1.5/4,3/3/5,3/-3/6,-3/-3/7,-3/3/8}
{\node[verti] (c1\z) at (\x,\y){};}

\foreach \x/\y/\z in
{1.1/2/13,1.1/-2/14,-1.1/-2/134,-1.1/2/1,2.6/3.5/123,2.6/-3.5/124,-2.6/-3.5/1234,-2.6/3.5/12}
{\node[ver] () at (\x,\y){\tiny{$T_{\z}^3$}};}

\foreach \x/\y in 
{1/4,2/3,6/7,8/5}
{\draw [line width=2pt, line cap=round, dash pattern=on 0pt off 1.7\pgflinewidth]  (c1\x) -- (c1\y);}

\foreach \x/\y in 
{1/2,4/3,6/5,8/7}
{\draw[line width=2pt, line cap=rectangle, dash pattern=on 1pt off 1]
(c1\x) -- (c1\y);}

\foreach \x/\y in 
{5/1,6/2,7/3,8/4}
{\path[edge] (c1\x)--(c1\y);}

\end{scope}

\begin{scope}[shift={(-5,0)}]
\foreach \x/\y/\z in
{1.5/1.5/1,1.5/-1.5/2,-1.5/-1.5/3,-1.5/1.5/4,3/3/5,3/-3/6,-3/-3/7,-3/3/8}
{\node[verti] (c2\z) at (\x,\y){};}

\foreach \x/\y/\z in
{1.1/2/23,1.1/-2/24,-1.1/-2/234,-1.1/2/2,2.6/3.5/3,2.6/-3.5/4,-2.6/-3.5/34,-2.6/3.5/0}
{\node[ver] () at (\x,\y){\tiny{$T_{\z}^3$}};}

\foreach \x/\y in 
{1/4,2/3,6/7,8/5}
{\draw [line width=2pt, line cap=round, dash pattern=on 0pt off 1.7\pgflinewidth]  (c2\x) -- (c2\y);}

\foreach \x/\y in 
{1/2,4/3,6/5,8/7}
{\draw[line width=2pt, line cap=rectangle, dash pattern=on 1pt off 1]
(c2\x) -- (c2\y);}

\foreach \x/\y in 
{5/1,6/2,7/3,8/4}
{\path[edge] (c2\x)--(c2\y);}

\end{scope}

\begin{scope}[shift={(5,0)}]
\foreach \x/\y/\z in
{1.5/1.5/1,1.5/-1.5/2,-1.5/-1.5/3,-1.5/1.5/4,3/3/5,3/-3/6,-3/-3/7,-3/3/8}
{\node[verti] (d1\z) at (\x,\y){};}

\foreach \x/\y/\z in
{1.1/2/0,1.1/-2/12,-1.1/-2/2,-1.1/2/1,2.6/3.5/4,2.6/-3.5/124,-2.6/-3.5/24,-2.6/3.5/14}
{\node[ver] () at (\x,\y){\tiny{$T_{\z}^4$}};}

\foreach \x/\y in 
{1/4,2/3,6/7,8/5}
{\draw [line width=2pt, line cap=round, dash pattern=on 0pt off 1.7\pgflinewidth]  (d1\x) -- (d1\y);}

\foreach \x/\y in 
{1/2,4/3,6/5,8/7}
{\draw[line width=2pt, line cap=rectangle, dash pattern=on 1pt off 1]
(d1\x) -- (d1\y);}

\foreach \x/\y in 
{5/1,6/2,7/3,8/4}
{\path[edge] (d1\x)--(d1\y);}

\end{scope}

\begin{scope}[shift={(15,0)}]
\foreach \x/\y/\z in
{1.5/1.5/1,1.5/-1.5/2,-1.5/-1.5/3,-1.5/1.5/4,3/3/5,3/-3/6,-3/-3/7,-3/3/8}
{\node[verti] (d2\z) at (\x,\y){};}

\foreach \x/\y/\z in
{1.1/2/3,1.1/-2/123,-1.1/-2/23,-1.1/2/13,2.6/3.5/34,2.6/-3.5/1234,-2.6/-3.5/234,-2.6/3.5/134}
{\node[ver] () at (\x,\y){\tiny{$T_{\z}^4$}};}

\foreach \x/\y in 
{1/4,2/3,6/7,8/5}
{\draw [line width=2pt, line cap=round, dash pattern=on 0pt off 1.7\pgflinewidth]  (d2\x) -- (d2\y);}

\foreach \x/\y in 
{1/2,4/3,6/5,8/7}
{\draw[line width=2pt, line cap=rectangle, dash pattern=on 1pt off 1]
(d2\x) -- (d2\y);}

\foreach \x/\y in 
{5/1,6/2,7/3,8/4}
{\path[edge] (d2\x)--(d2\y);}

\end{scope}

\end{scope}

\begin{scope}[shift={(-16,-7)}]

\begin{scope}[shift={(4,4)}]
\foreach \x/\y/\z in
{1.5/1.5/1,1.5/-1.5/2,-1.5/-1.5/3,-1.5/1.5/4}
{\node[verti] (e1\z) at (\x,\y){};}

\foreach \x/\y/\z in
{1/2/13,1.1/-2/23,-1.1/-2/2,-1.1/2/1}
{\node[ver] () at (\x,\y){\tiny{$T_{\z}^5$}};}

\foreach \x/\y in 
{1/4,2/3}
{\draw [line width=2pt, line cap=round, dash pattern=on 0pt off 1.7\pgflinewidth]  (e1\x) -- (e1\y);}

\foreach \x/\y in 
{1/2,4/3}
{\draw[line width=2pt, line cap=rectangle, dash pattern=on 1pt off 1]
(e1\x) -- (e1\y);}

\end{scope}

\begin{scope}[shift={(4,-2)}]
\foreach \x/\y/\z in
{1.5/1.5/1,1.5/-1.5/2,-1.5/-1.5/3,-1.5/1.5/4}
{\node[verti] (e2\z) at (\x,\y){};}

\foreach \x/\y/\z in
{1/2/0,1.1/-2/12,-1.1/-2/123,-1.1/2/3}
{\node[ver] () at (\x,\y){\tiny{$T_{\z}^5$}};}

\foreach \x/\y in 
{1/4,2/3}
{\draw [line width=2pt, line cap=round, dash pattern=on 0pt off 1.7\pgflinewidth]  (e2\x) -- (e2\y);}

\foreach \x/\y in 
{1/2,4/3}
{\draw[line width=2pt, line cap=rectangle, dash pattern=on 1pt off 1]
(e2\x) -- (e2\y);}
\end{scope}

\begin{scope}[shift={(-4,-2)}]
\foreach \x/\y/\z in
{1.5/1.5/1,1.5/-1.5/2,-1.5/-1.5/3,-1.5/1.5/4}
{\node[verti] (e3\z) at (\x,\y){};}

\foreach \x/\y/\z in
{1.1/2/34,1.1/-2/1234,-1.1/-2/124,-1.1/2/4}
{\node[ver] () at (\x,\y){\tiny{$T_{\z}^5$}};}

\foreach \x/\y in 
{1/4,2/3}
{\draw [line width=2pt, line cap=round, dash pattern=on 0pt off 1.7\pgflinewidth]  (e3\x) -- (e3\y);}

\foreach \x/\y in 
{1/2,4/3}
{\draw[line width=2pt, line cap=rectangle, dash pattern=on 1pt off 1]
(e3\x) -- (e3\y);}
\end{scope}

\begin{scope}[shift={(-4,4)}]
\foreach \x/\y/\z in
{1.5/1.5/1,1.5/-1.5/2,-1.5/-1.5/3,-1.5/1.5/4}
{\node[verti] (e4\z) at (\x,\y){};}

\foreach \x/\y/\z in
{1/2/24,1.1/-2/14,-1.1/-2/134,-1.1/2/234}
{\node[ver] () at (\x,\y){\tiny{$T_{\z}^5$}};}

\foreach \x/\y in 
{1/4,2/3}
{\draw [line width=2pt, line cap=round, dash pattern=on 0pt off 1.7\pgflinewidth]  (e4\x) -- (e4\y);}

\foreach \x/\y in 
{1/2,4/3}
{\draw[line width=2pt, line cap=rectangle, dash pattern=on 1pt off 1]
(e4\x) -- (e4\y);}
\end{scope}

\end{scope}

\begin{scope}[shift={(5,-6)}]
\foreach \x/\y/\z in
{1.5/1.5/1,1.5/-1.5/2,-1.5/-1.5/3,-1.5/1.5/4,3/3/5,3/-3/6,-3/-3/7,-3/3/8,4.5/4.5/9,4.5/-4.5/10,-4.5/-4.5/11,-4.5/4.5/12,6/6/13,6/-6/14,-6/-6/15,-6/6/16}
{\node[verti] (f\z) at (\x,\y){};}

\foreach \x/\y/\z in
{1.1/2/13,1.1/-2/23,-1.1/-2/2,-1.1/2/1,2.4/3.5/123,2.6/-3.5/3,-2.6/-3.5/0,-2.6/3.5/12,3.8/5/1234,4.1/-5/34,-4.1/-5/4,-4.1/5/124,5.6/6.5/134,5.6/-6.5/234,-5.6/-6.5/24,-5.6/6.5/14}
{\node[ver] () at (\x,\y){\tiny{$T_{\z}^6$}};}

\foreach \x/\y in 
{5/9,6/10,7/11,8/12}
{\path[edge,dashed] (f\x)--(f\y);}

\foreach \x/\y in 
{1/4,2/3,6/7,8/5,10/11,12/9,14/15,16/13}
{\draw [line width=2pt, line cap=round, dash pattern=on 0pt off 1.7\pgflinewidth]  (f\x) -- (f\y);}

\foreach \x/\y in 
{1/2,4/3,6/5,8/7,10/9,12/11,14/13,16/15}
{\draw[line width=2pt, line cap=rectangle, dash pattern=on 1pt off 1]
(f\x) -- (f\y);}

\foreach \x/\y in 
{5/1,6/2,7/3,8/4,9/13,10/14,11/15,12/16}
{\path[edge,dotted] (f\x)--(f\y);}

\draw[edge,dashed] plot [smooth,tension=0.5] coordinates{(f1) (4.2,2.5) (f13)};

\draw[edge,dashed] plot [smooth,tension=0.5] coordinates{(f2) (4.2,-2.5) (f14)};

\draw[edge,dashed] plot [smooth,tension=0.5] coordinates{(f3) (-4.2,-2.5) (f15)};

\draw[edge,dashed] plot [smooth,tension=0.5] coordinates{(f4) (-4.2,2.5) (f16)};

\end{scope}

%\begin{scope} [shift = {(-15,-14)}]
%\foreach \x/\y/\z in {1/-0.5/0,5/-0.5/1,9/-0.5/2,13/-0.5/3,17/-0.5/4}
%{\node[ver] () at (\x,\y){$\z$};}
%\path[edge,dashed] (12,0) -- (14,0);

%\draw [line width=2pt, line cap=round, dash pattern=on 0pt off 1.7\pgflinewidth]  (16,0) -- (18,0);
%\path[edge] (8,0) -- (10,0);
%\draw[line width=2pt, line cap=rectangle, dash pattern=on 1pt off 1] (0,0) -- (2,0);

%\path[edge,dotted] (4,0) -- (6,0);
%\end{scope} 

\end{tikzpicture}
\caption{The gem $\Gamma_1$ of $\mathbb{RP}^2 \times \mathbb{RP}^2$ with $96$ vertices.}\label{fig:Ex1}

\end{figure}

\begin{figure}[h!]
\tikzstyle{ver}=[]
\tikzstyle{vert}=[circle, draw, fill=black!100, inner sep=0pt, minimum width=2pt]
\tikzstyle{verti}=[circle, draw, fill=black!100, inner sep=0pt, minimum width=3pt]
\tikzstyle{edge} = [draw,thick,-]
    \centering
\begin{tikzpicture}[scale=0.45]
\begin{scope}[rotate=23]

\foreach \x/\y in {45/a1,90/a2,135/a3,180/a4,225/a5,270/a6,315/a7,0/a8}
{\node[verti] (\y) at (\x:2){};}

\foreach \x/\y in {45/b1,90/b2,135/b3,180/b4,225/b5,270/b6,315/b7,0/b8}
{\node[verti] (\y) at (\x:3.5){};}

\foreach \x/\y in {45/c1,90/c2,135/c3,180/c4,225/c5,270/c6,315/c7,0/c8}
{\node[verti] (\y) at (\x:5){};}

\foreach \x/\y in {45/d1,90/d2,135/d3,180/d4,225/d5,270/d6,315/d7,0/d8}
{\node[verti] (\y) at (\x:6.5){};}

\foreach \x/\y in {45/e1,90/e2,135/e3,180/e4,225/e5,270/e6,315/e7,0/e8}
{\node[verti] (\y) at (\x:8){};}

\foreach \x/\y in {45/f1,90/f2,135/f3,180/f4,225/f5,270/f6,315/f7,0/f8}
{\node[verti] (\y) at (\x:9.5){};}

\foreach \x/\y in {45/g1,90/g2,135/g3,180/g4,225/g5,270/g6,315/g7,0/g8}
{\node[verti] (\y) at (\x:11){};}

\foreach \x/\y in {45/h1,90/h2,135/h3,180/h4,225/h5,270/h6,315/h7,0/h8}
{\node[verti] (\y) at (\x:12.5){};}

\foreach \x/\y/\z in {45/134/3,90/1/3,135/1/5,180/2/5,225/2/3,270/234/3,315/234/5,0/134/5}
{\node[ver] at (\x:1.35){\tiny{$T_{\y}^{\z}$}};}

\foreach \x/\y/\z in {34/134/2,83/1/2,130/1/4,173/2/4,218/2/2,262/234/2,310/234/4,353/134/4}
{\node[ver] at (\x:3.8){\tiny{$T_{\y}^{\z}$}};}

\foreach \x/\y/\z in {38/34/2,85/0/2,131/0/4,174/12/4,218/12/2,263/1234/2,311/1234/4,355/34/4}
{\node[ver] at (\x:5.3){\tiny{$T_{\y}^{\z}$}};}   

\foreach \x/\y/\z in {40/34/3,86/0/3,131/0/5,175/12/5,219/12/3,264/1234/3,311/1234/5,356/34/5}
{\node[ver] at (\x:6.8){\tiny{$T_{\y}^{\z}$}};}

\foreach \x/\y/\z in {42/4/3,87/3/3,132/3/5,176/123/5,220/123/3,266/124/3,312/124/5,357/4/5}
{\node[ver] at (\x:8.45){\tiny{$T_{\y}^{\z}$}};}

\foreach \x/\y/\z in {42/4/2,87/3/2,132/3/4,176/123/4,221/123/2,267/124/2,312/124/4,357/4/4}
{\node[ver] at (\x:9.9){\tiny{$T_{\y}^{\z}$}};}

\foreach \x/\y/\z in {42/14/2,87/13/2,133/13/4,177/23/4,222/23/2,267/24/2,313/24/4,357/14/4}
{\node[ver] at (\x:11.3){\tiny{$T_{\y}^{\z}$}};}

\foreach \x/\y/\z in {42/14/3,87/13/3,133/13/5,177/23/5,223/23/3,268/24/3,313/24/5,357/14/5}
{\node[ver] at (\x:12.8){\tiny{$T_{\y}^{\z}$}};}

\foreach \x/\y in {a2/a3,a4/a5,a6/a7,a1/a8,b2/b3,b4/b5,b6/b7,b1/b8,c2/c3,c4/c5,c6/c7,c1/c8,d2/d3,d4/d5,d6/d7,d1/d8,e2/e3,e4/e5,e6/e7,e1/e8,f2/f3,f4/f5,f6/f7,f1/f8,g2/g3,g4/g5,g6/g7,g1/g8,h2/h3,h4/h5,h6/h7,h1/h8}
{\path[edge,dotted] (\x) -- (\y);}

\foreach \x/\y in {a1/b1,a2/b2,a3/b3,a4/b4,a5/b5,a6/b6,a7/b7,a8/b8,c1/d1,c2/d2,c3/d3,c4/d4,c5/d5,c6/d6,c7/d7,c8/d8,e1/f1,e2/f2,e3/f3,e4/f4,e5/f5,e6/f6,e7/f7,e8/f8,g1/h1,g2/h2,g3/h3,g4/h4,g5/h5,g6/h6,g7/h7,g8/h8}
{\path[edge,dashed] (\x) -- (\y);}

\foreach \x/\y in 
{a1/a2,a4/a3,a6/a5,a8/a7,b1/b2,b4/b3,b6/b5,b8/b7,c1/c2,c4/c3,c6/c5,c8/c7,d1/d2,d4/d3,d6/d5,d8/d7,e1/e2,e4/e3,e6/e5,e8/e7,f1/f2,f4/f3,f6/f5,f8/f7,g1/g2,g4/g3,g6/g5,g8/g7,h1/h2,h4/h3,h6/h5,h8/h7}
{\draw[line width=2pt, line cap=rectangle, dash pattern=on 1pt off 1]
(\x) -- (\y);}

\foreach \x/\y in {c1/b1,c2/b2,c3/b3,c4/b4,c5/b5,c6/b6,c7/b7,c8/b8,e1/d1,e2/d2,e3/d3,e4/d4,e5/d5,e6/d6,e7/d7,e8/d8,g1/f1,g2/f2,g3/f3,g4/f4,g5/f5,g6/f6,g7/f7,g8/f8}
{\draw [line width=2pt, line cap=round, dash pattern=on 0pt off 1.7\pgflinewidth]  (\x) -- (\y);}

\foreach \x/\y in {57/i1,102/i2,147/i3,192/i4,237/i5,282/i6,327/i7,12/i8}
{\node[ver](\y) at (\x:7){};}

\foreach \x/\y/\z in {a1/h1/i1,a2/h2/i2,a3/h3/i3,a4/h4/i4,a5/h5/i5,a6/h6/i6,a7/h7/i7,a8/h8/i8}
{\draw [line width=2pt, line cap=round, dash pattern=on 0pt off 1.7\pgflinewidth] plot [smooth,tension=0.5] coordinates{(\x) (\z) (\y)};}

\end{scope}  

\end{tikzpicture}

 \caption{Subgraph $S_1$ of $\Gamma_1$.}
    \label{fig:S}
\end{figure}
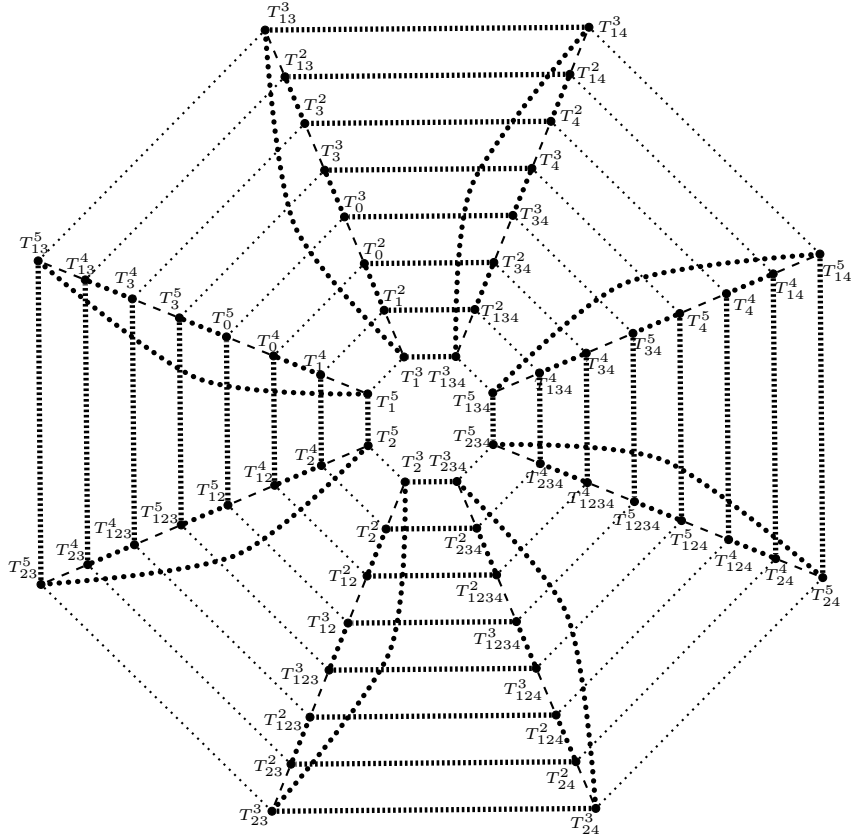

\begin{figure}[h]
\tikzstyle{ver}=[]
\tikzstyle{edge} = [draw,thick,-]
\centering
\begin{tikzpicture}[scale=0.8]

\foreach \x/\y/\z in
{-9/0/13,-8/0/14,-7/0/24,-6/0/23,-4/0/13,-3/0/24,-2/0/14,-1/0/23,1/0/13,2/0/4,3/0/124,4/0/23,6/0/13,7/0/124,8/0/4,9/0/23,-9/1/3,-8/1/4,-7/1/124,-6/1/123,-4/1/3,-3/1/124,-2/1/4,-1/1/123,1/1/3,2/1/14,3/1/24,4/1/123,6/1/3,7/1/24,8/1/14,9/1/123,-9/2/0,-8/2/34,-7/2/1234,-6/2/12,-4/2/0,-3/2/1234,-2/2/34,-1/2/12,1/2/0,2/2/134,3/2/234,4/2/12,6/2/0,7/2/234,8/2/134,9/2/12,-9/3/1,-8/3/134,-7/3/234,-6/3/2,-4/3/1,-3/3/234,-2/3/134,-1/3/2,1/3/1,2/3/34,3/3/1234,4/3/2,6/3/1,7/3/1234,8/3/34,9/3/2}
{\node[ver] at (\x,\y){$\z$};}

\foreach \x/\y/\z in
{-6.5/-5/13,-5.5/-5/14,-4.5/-5/23,-3.5/-5/24,-1.5/-5/13,-0.5/-5/14,0.5/-5/234,1.5/-5/2,3.5/-5/13,4.5/-5/14,5.5/-5/2,6.5/-5/234,-6.5/-4/3,-5.5/-4/4,-4.5/-4/123,-3.5/-4/124,-1.5/-4/3,-0.5/-4/4,0.5/-4/1234,1.5/-4/12,3.5/-4/3,4.5/-4/4,5.5/-4/12,6.5/-4/1234,-6.5/-3/0,-5.5/-3/34,-4.5/-3/12,-3.5/-3/1234,-1.5/-3/0,-0.5/-3/34,0.5/-3/124,1.5/-3/123,3.5/-3/0,4.5/-3/34,5.5/-3/123,6.5/-3/124,-6.5/-2/1,-5.5/-2/134,-4.5/-2/2,-3.5/-2/234,-1.5/-2/1,-0.5/-2/134,0.5/-2/24,1.5/-2/23,3.5/-2/1,4.5/-2/134,5.5/-2/23,6.5/-2/24}
{\node[ver] at (\x,\y){$\z$};}

\draw (-9.4,-0.4) rectangle (-5.5,3.4);
\draw (-4.4,-0.4) rectangle (-0.5,3.4);
\draw (0.6,-0.4) rectangle (4.5,3.4);
\draw (5.6,-0.4) rectangle (9.5,3.4);

\draw (-6.9,-5.4) rectangle (-2.9,-1.6);
\draw (-1.9,-5.4) rectangle (2,-1.6);
\draw (3.1,-5.4) rectangle (7.1,-1.6);

\end{tikzpicture}
\caption{Compact form of $S_i$, for $1\le i\le 7$.}\label{fig:CF}
\end{figure}

Using the construction introduced in \cite{ab25}, we will construct gems of these small covers. Let us fix some notations that will be used throughout this section. For an element $g=(c_1,c_2,c_3,c_4)$ in $\mathbb{Z}_2^4$, let the corresponding word $w_g$ be formed by listing the indices of the non-zero entries in $g$. For example, the word corresponding to the element $(0,1,0,1)$ is $2\,4$. Let $W$ be the set of all the words corresponding to the elements of $\mathbb{Z}_2^4$. The simple polytope $\{g\}\times P$ $\subset \mathbb{Z}_2^4\times P$, a copy of $P$, is denoted by $t_{w_g}$, where $w_g\in W$ is the word corresponding to $g\in \mathbb{Z}_2^4$. For $w\in W$, the simple polytope $t_w$ is color triangulated by the $4$-simplices $$t^1_w=[v_0^0,v_1^0,v_2^0,v_3^1,v_4^2]_w,\ t^2_w=[v_0^0,v_1^0,v_2^1,v_3^1,v_4^2]_w,\ t^3_w=[v_0^0,v_1^0,v_2^1,v_3^2,v_4^2]_w,$$ $$ t^4_w=[v_0^0,v_1^1,v_2^1,v_3^1,v_4^2]_w,\ t^5_w=[v_0^0,v_1^1,v_2^1,v_3^2,v_4^2]_w, \ t^6_w=[v_0^0,v_1^1,v_2^2,v_3^2,v_4^2]_w,$$ where the vertices of each $4$-simplex are colored by the colors $(0,1,2,3,4)$ in order. If a $3$-face of a $4$-simplex does not have the vertex of color $i$, then we call this face the $i$-colored face of the $4$-simplex. Let $\lambda_i: \mathcal F(P)\to \mathbb{Z}_2^4$ be a $\mathbb{Z}_2$-characteristic function, where $1\le i\le 7$ (cf. Lemma \ref{lemma:seven}). Figure \ref{fig:T} shows all the $3$-faces, together with their corresponding $\mathbb{Z}_2$-characteristic vectors, of the $4$-simplex $t_w^j$ of $t_w$, for $1\le j\le 6$ and $w\in W$. With this colored triangulation of each copy of $P$ in $\mathbb{Z}_2^4\times P$, it follows from the construction of small covers that $M^4(\lambda_i)$ admits a colored triangulation, where $1\le i\le 7$ (cf. Subsection \ref{smallcover}). 

Let $(\Gamma_i,\gamma_i)$ denote the $5$-regular colored graph corresponding to this colored triangulation of $M^4(\lambda_i)$, for $1\le i\le 7$. We denote the vertex of $V(\Gamma_i)$ corresponding to the $4$-simplex $t_w^j$ by $T_w^j$, for all $w\in W$ and $1\le j\le 6$ (cf. Subsection \ref{crystal}). Since one copy of $P$ has six $4$-simplices, we have that $\Gamma_i$ has $96$ vertices. Figure \ref{fig:Ex1} represents the $5$-regular colored gem $\Gamma_1$ of $\mathbb{RP}^2\times \mathbb{RP}^2$ obtained via the above construction. From Figure \ref{fig:T}, it follows that $g(\Gamma_{i_{\hat{0}}})=g(\Gamma_{i_{\hat{4}}})=1$, $g(\Gamma_{i_{\hat{1}}})=g(\Gamma_{i_{\hat{3}}})=2$ and $g(\Gamma_{i_{\hat{2}}})=3$. Let $S_i$ be the $4$-regular colored subgraph of $\Gamma_i$ generated by the vertices $T_w^j$, where $w\in W,\ 2\le j\le 5$, and edges of colors $0,1,3,4$, for $1\le i\le 7$. Figure \ref{fig:S} represents the $4$-regular colored subgraph $S_1$ of $\Gamma_1$. The cardinality of the vertex set $V(S_i)$ is $64$. Since $\{e_1,e_3,(c_1,c_2,1,1),(1,1,c_3,c_4)\}$ is a basis of $\mathbb{Z}_2^4$ (consider the vertex $v_2^1$) and $\{0,3\},\{1,3\}, \{0,4\}, \{1,4\}$-colored cycles are four-cycles (see Figure \ref{fig:T}), we have that $S_i$ is isomorphic to $S_1$, for $2\le i\le 7$. From Figure \ref{fig:T}, we have that the notations of the vertices of a $\{0,1\}$-colored cycle (resp. $\{3,4\}$-colored cycle) contains exactly four distinct subscripts. So, we can write the subgraph $S_i$ in the compact form as in Figure \ref{fig:CF}, for $1\le i\le 7$. In the compact form of $S_i$, rows represent the subscripts involved in a $\{0,1\}$-colored cycle and columns represent the subscripts involved in a $\{3,4\}$-colored cycle. The following Theorem presents the construction of a $52$-vertex crystallization $\Gamma_i^\prime$, obtained from the gem $\Gamma_i$, with regular genus $8$, for $1\le i\le 7$. In the proof of Theorem \ref{thm:RP2}, any statement whose justification is not explicitly provided can be understood to follow from Figure \ref{fig:T}.

 \begin{figure}[h!]
\tikzstyle{ver}=[]
\tikzstyle{verti}=[circle, draw, fill=black!100, inner sep=0pt, minimum width=3pt]
\tikzstyle{vert}=[circle, draw, fill=black!100, inner sep=0pt, minimum width=1pt]
\tikzstyle{edge} = [draw,thick,-]
    \centering

\begin{tikzpicture}[scale=0.4]

\begin{scope}[shift={(-16,16)}]
\foreach \x/\y/\z in
{1.5/1.5/1,1.5/-1.5/2,-1.5/-1.5/3,-1.5/1.5/4,3/3/5,3/-3/6,-3/-3/7,-3/3/8,4.5/4.5/9,4.5/-4.5/10,-4.5/-4.5/11,-4.5/4.5/12,6/6/13,6/-6/14,-6/-6/15,-6/6/16,7.5/6/17,7.5/-6/18,-7.5/-6/19,-7.5/6/20}
{\node[verti] (a\z) at (\x,\y){};}

\foreach \x/\y/\z in
{2.6/3.5/4,2.6/-3.5/3,-2.6/-3.5/13,-2.6/3.5/14,4.1/5/24,4.1/-5/23,-4.1/-5/123,-4.1/5/124,5.6/6.5/2,5.6/-6.5/234,-5.6/-6.5/1234,-5.6/6.5/12}
{\node[ver] () at (\x,\y){\tiny{$T_{\z}^1$}};}

\foreach \x/\y/\z in
{1.1/2/0,1.1/-2/34,-1.1/-2/134,-1.1/2/1}
{\node[ver] () at (\x,\y){\tiny{$T_{\z}^4$}};}

\foreach \x/\y/\z in
{8.15/6/0,8.1/-6/34,-8.3/-6/134,-8.1/6/1}
{\node[ver] () at (\x,\y){\tiny{$T_{\z}^3$}};}

\foreach \x/\y in 
{5/9,6/10,7/11,8/12,13/17,14/18,15/19,16/20}
{\path[edge,dashed] (a\x)--(a\y);}

\foreach \x/\y in 
{1/4,2/3,6/7,8/5,10/11,12/9,14/15,16/13}
{\draw [line width=2pt, line cap=round, dash pattern=on 0pt off 1.7\pgflinewidth]  (a\x) -- (a\y);}

\foreach \x/\y in 
{6/5,8/7,10/9,12/11,14/13,16/15,17/18,19/20}
{\draw[line width=2pt, line cap=rectangle, dash pattern=on 1pt off 1]
(a\x) -- (a\y);}

\foreach \x/\y in 
{5/1,6/2,7/3,8/4,9/13,10/14,11/15,12/16}
{\path[edge,dotted] (a\x)--(a\y);}

 \end{scope}

\begin{scope}[shift={(-4.5,16)}]
\foreach \x/\y/\z in
{-1/1/1,1/1/2,-1/0/3,1/0/4,-1/-1/5,1/-1/6}
{\node[vert] (\z) at (\x,\y){};}

\foreach \x/\y/\z in
{-1.5/1.5/1,1.5/1.5/2,-1.5/0/3,1.5/0/4,-1.5/-1.5/5,1.5/-1.5/6}
{\node[ver] () at (\x,\y){\tiny{$T_w^{\z}$}};}

\foreach \x/\y in
{1/2,5/6}
{\path[edge] (\x)--(\y);}

\foreach \x/\y in
{4/2,5/3}
{\path[edge,dotted] (\x)--(\y);}

\foreach \x/\y in
{3/2,5/4}
{\path[edge,dashed] (\x)--(\y);}

\end{scope}

\begin{scope}[shift={(4,16)}]

\begin{scope}[shift={(4,-2)}]
\foreach \x/\y/\z in
{1.5/1.5/1,1.5/-1.5/2,-1.5/-1.5/3,-1.5/1.5/4}
{\node[verti] (b2\z) at (\x,\y){};}

\foreach \x/\y/\z in
{1/2/4,1.1/-2/3,-1.1/-2/13,-1.1/2/14}
{\node[ver] () at (\x,\y){\tiny{$T_{\z}^2$}};}

\foreach \x/\y in 
{1/4,2/3}
{\draw [line width=2pt, line cap=round, dash pattern=on 0pt off 1.7\pgflinewidth]  (b2\x) -- (b2\y);}

\foreach \x/\y in 
{1/2,4/3}
{\draw[line width=2pt, line cap=rectangle, dash pattern=on 1pt off 1]
(b2\x) -- (b2\y);}
\end{scope}

\begin{scope}[shift={(-4,-2)}]
\foreach \x/\y/\z in
{1.5/1.5/1,1.5/-1.5/2,-1.5/-1.5/3,-1.5/1.5/4}
{\node[verti] (b3\z) at (\x,\y){};}

\foreach \x/\y/\z in
{1.1/2/24,1.1/-2/23,-1.1/-2/123,-1.1/2/124}
{\node[ver] () at (\x,\y){\tiny{$T_{\z}^2$}};}

\foreach \x/\y in 
{1/4,2/3}
{\draw [line width=2pt, line cap=round, dash pattern=on 0pt off 1.7\pgflinewidth]  (b3\x) -- (b3\y);}

\foreach \x/\y in 
{1/2,4/3}
{\draw[line width=2pt, line cap=rectangle, dash pattern=on 1pt off 1]
(b3\x) -- (b3\y);}
\end{scope}

\begin{scope}[shift={(-4,4)}]
\foreach \x/\y/\z in
{1.5/1.5/1,1.5/-1.5/2,-1.5/-1.5/3,-1.5/1.5/4}
{\node[verti] (b4\z) at (\x,\y){};}

\foreach \x/\y/\z in
{1/2/2,1.1/-2/234,-1.1/-2/1234,-1.1/2/12}
{\node[ver] () at (\x,\y){\tiny{$T_{\z}^2$}};}

\foreach \x/\y in 
{1/4,2/3}
{\draw [line width=2pt, line cap=round, dash pattern=on 0pt off 1.7\pgflinewidth]  (b4\x) -- (b4\y);}

\foreach \x/\y in 
{1/2,4/3}
{\draw[line width=2pt, line cap=rectangle, dash pattern=on 1pt off 1]
(b4\x) -- (b4\y);}
\end{scope}

\end{scope}

\begin{scope}[shift={(-6,5)}]
\begin{scope}[shift={(-15,0)}]
\foreach \x/\y/\z in
{1.5/1.5/1,1.5/-1.5/2,-1.5/-1.5/3,-1.5/1.5/4,3/3/5,3/-3/6,-3/-3/7,-3/3/8}
{\node[verti] (c1\z) at (\x,\y){};}

\foreach \x/\y/\z in
{1.1/2/13,1.1/-2/14,-1.1/-2/134,-1.1/2/1,2.6/3.5/123,2.6/-3.5/124,-2.6/-3.5/1234,-2.6/3.5/12}
{\node[ver] () at (\x,\y){\tiny{$T_{\z}^3$}};}

\foreach \x/\y in 
{1/4,2/3,6/7,8/5}
{\draw [line width=2pt, line cap=round, dash pattern=on 0pt off 1.7\pgflinewidth]  (c1\x) -- (c1\y);}

\foreach \x/\y in 
{1/2,4/3,6/5,8/7}
{\draw[line width=2pt, line cap=rectangle, dash pattern=on 1pt off 1]
(c1\x) -- (c1\y);}

\foreach \x/\y in 
{5/1,6/2,7/3,8/4}
{\path[edge] (c1\x)--(c1\y);}

\end{scope}

\begin{scope}[shift={(-5,0)}]
\foreach \x/\y/\z in
{1.5/1.5/1,1.5/-1.5/2,-1.5/-1.5/3,-1.5/1.5/4,3/3/5,3/-3/6,-3/-3/7,-3/3/8}
{\node[verti] (c2\z) at (\x,\y){};}

\foreach \x/\y/\z in
{1.1/2/23,1.1/-2/24,-1.1/-2/234,-1.1/2/2,2.6/3.5/3,2.6/-3.5/4,-2.6/-3.5/34,-2.6/3.5/0}
{\node[ver] () at (\x,\y){\tiny{$T_{\z}^3$}};}

\foreach \x/\y in 
{1/4,2/3,6/7,8/5}
{\draw [line width=2pt, line cap=round, dash pattern=on 0pt off 1.7\pgflinewidth]  (c2\x) -- (c2\y);}

\foreach \x/\y in 
{1/2,4/3,6/5,8/7}
{\draw[line width=2pt, line cap=rectangle, dash pattern=on 1pt off 1]
(c2\x) -- (c2\y);}

\foreach \x/\y in 
{5/1,6/2,7/3,8/4}
{\path[edge] (c2\x)--(c2\y);}

\end{scope}

\begin{scope}[shift={(5,0)}]
\foreach \x/\y/\z in
{1.5/1.5/1,1.5/-1.5/2,-1.5/-1.5/3,-1.5/1.5/4,3/3/5,3/-3/6,-3/-3/7,-3/3/8}
{\node[verti] (d1\z) at (\x,\y){};}

\foreach \x/\y/\z in
{1.1/2/0,1.1/-2/12,-1.1/-2/2,-1.1/2/1,2.6/3.5/4,2.6/-3.5/124,-2.6/-3.5/24,-2.6/3.5/14}
{\node[ver] () at (\x,\y){\tiny{$T_{\z}^4$}};}

\foreach \x/\y in 
{1/4,2/3,6/7,8/5}
{\draw [line width=2pt, line cap=round, dash pattern=on 0pt off 1.7\pgflinewidth]  (d1\x) -- (d1\y);}

\foreach \x/\y in 
{1/2,4/3,6/5,8/7}
{\draw[line width=2pt, line cap=rectangle, dash pattern=on 1pt off 1]
(d1\x) -- (d1\y);}

\foreach \x/\y in 
{5/1,6/2,7/3,8/4}
{\path[edge] (d1\x)--(d1\y);}

\end{scope}

\begin{scope}[shift={(15,0)}]
\foreach \x/\y/\z in
{1.5/1.5/1,1.5/-1.5/2,-1.5/-1.5/3,-1.5/1.5/4,3/3/5,3/-3/6,-3/-3/7,-3/3/8}
{\node[verti] (d2\z) at (\x,\y){};}

\foreach \x/\y/\z in
{1.1/2/3,1.1/-2/123,-1.1/-2/23,-1.1/2/13,2.6/3.5/34,2.6/-3.5/1234,-2.6/-3.5/234,-2.6/3.5/134}
{\node[ver] () at (\x,\y){\tiny{$T_{\z}^4$}};}

\foreach \x/\y in 
{1/4,2/3,6/7,8/5}
{\draw [line width=2pt, line cap=round, dash pattern=on 0pt off 1.7\pgflinewidth]  (d2\x) -- (d2\y);}

\foreach \x/\y in 
{1/2,4/3,6/5,8/7}
{\draw[line width=2pt, line cap=rectangle, dash pattern=on 1pt off 1]
(d2\x) -- (d2\y);}

\foreach \x/\y in 
{5/1,6/2,7/3,8/4}
{\path[edge] (d2\x)--(d2\y);}

\end{scope}

\end{scope}

\begin{scope}[shift={(-16,-7)}]

\begin{scope}[shift={(4,4)}]
\foreach \x/\y/\z in
{1.5/1.5/1,1.5/-1.5/2,-1.5/-1.5/3,-1.5/1.5/4}
{\node[verti] (e1\z) at (\x,\y){};}

\foreach \x/\y/\z in
{1/2/13,1.1/-2/23,-1.1/-2/2,-1.1/2/1}
{\node[ver] () at (\x,\y){\tiny{$T_{\z}^5$}};}

\foreach \x/\y in 
{1/4,2/3}
{\draw [line width=2pt, line cap=round, dash pattern=on 0pt off 1.7\pgflinewidth]  (e1\x) -- (e1\y);}

\foreach \x/\y in 
{1/2,4/3}
{\draw[line width=2pt, line cap=rectangle, dash pattern=on 1pt off 1]
(e1\x) -- (e1\y);}

\end{scope}

\begin{scope}[shift={(-4,-2)}]
\foreach \x/\y/\z in
{1.5/1.5/1,1.5/-1.5/2,-1.5/-1.5/3,-1.5/1.5/4}
{\node[verti] (e3\z) at (\x,\y){};}

\foreach \x/\y/\z in
{1.1/2/34,1.1/-2/1234,-1.1/-2/124,-1.1/2/4}
{\node[ver] () at (\x,\y){\tiny{$T_{\z}^5$}};}

\foreach \x/\y in 
{1/4,2/3}
{\draw [line width=2pt, line cap=round, dash pattern=on 0pt off 1.7\pgflinewidth]  (e3\x) -- (e3\y);}

\foreach \x/\y in 
{1/2,4/3}
{\draw[line width=2pt, line cap=rectangle, dash pattern=on 1pt off 1]
(e3\x) -- (e3\y);}
\end{scope}

\begin{scope}[shift={(-4,4)}]
\foreach \x/\y/\z in
{1.5/1.5/1,1.5/-1.5/2,-1.5/-1.5/3,-1.5/1.5/4}
{\node[verti] (e4\z) at (\x,\y){};}

\foreach \x/\y/\z in
{1/2/24,1.1/-2/14,-1.1/-2/134,-1.1/2/234}
{\node[ver] () at (\x,\y){\tiny{$T_{\z}^5$}};}

\foreach \x/\y in 
{1/4,2/3}
{\draw [line width=2pt, line cap=round, dash pattern=on 0pt off 1.7\pgflinewidth]  (e4\x) -- (e4\y);}

\foreach \x/\y in 
{1/2,4/3}
{\draw[line width=2pt, line cap=rectangle, dash pattern=on 1pt off 1]
(e4\x) -- (e4\y);}
\end{scope}

\end{scope}

\begin{scope}[shift={(5,-7.5)}]
\foreach \x/\y/\z in
{1.5/1.5/1,1.5/-1.5/2,-1.5/-1.5/3,-1.5/1.5/4,3/3/5,3/-3/6,-3/-3/7,-3/3/8,6/6/9,6/-6/10,-6/-6/11,-6/6/12,7.5/7.5/13,7.5/-7.5/14,-7.5/-7.5/15,-7.5/7.5/16,4.5/4.5/17,4.5/-4.5/18,-4.5/-4.5/19,-4.5/4.5/20}
{\node[verti] (f\z) at (\x,\y){};}

\foreach \x/\y/\z in
{1.1/2/13,1.1/-2/23,-1.1/-2/2,-1.1/2/1,5.6/6.5/1234,5.6/-6.5/34,-5.6/-6.5/4,-5.6/6.5/124,7.1/8/134,7.1/-8/234,-7.1/-8/24,-7.1/8/14}
{\node[ver] () at (\x,\y){\tiny{$T_{\z}^6$}};}

\foreach \x/\y/\z in
{2.6/3.5/123,2.6/-3.5/3,-2.6/-3.5/0,-2.6/3.5/12}
{\node[ver] () at (\x,\y){\tiny{$T_{\z}^3$}};}

\foreach \x/\y/\z in
{4.1/5/123,4.1/-5/3,-4.1/-5/0,-4.1/5/12}
{\node[ver] () at (\x,\y){\tiny{$T_{\z}^4$}};}

\foreach \x/\y in 
{17/9,18/10,19/11,20/12}
{\path[edge,dashed] (f\x)--(f\y);}

\foreach \x/\y in 
{1/4,2/3,6/7,8/5,10/11,12/9,14/15,16/13}
{\draw [line width=2pt, line cap=round, dash pattern=on 0pt off 1.7\pgflinewidth]  (f\x) -- (f\y);}

\foreach \x/\y in 
{1/2,4/3,17/18,19/20,10/9,12/11,14/13,16/15}
{\draw[line width=2pt, line cap=rectangle, dash pattern=on 1pt off 1]
(f\x) -- (f\y);}

\foreach \x/\y in 
{5/1,6/2,7/3,8/4,9/13,10/14,11/15,12/16}
{\path[edge,dotted] (f\x)--(f\y);}

\draw[edge,dashed] plot [smooth,tension=0.5] coordinates{(f1) (4.2,2.5) (f13)};

\draw[edge,dashed] plot [smooth,tension=0.5] coordinates{(f2) (4.2,-2.5) (f14)};

\draw[edge,dashed] plot [smooth,tension=0.5] coordinates{(f3) (-4.2,-2.5) (f15)};

\draw[edge,dashed] plot [smooth,tension=0.5] coordinates{(f4) (-4.2,2.5) (f16)};

\end{scope}

\end{tikzpicture}
\caption{The gem $\Gamma^1_1$ of $\mathbb{RP}^2 \times \mathbb{RP}^2$ with $80$ vertices.}\label{fig:Ex2}

\end{figure}

\begin{figure}[h!]
\tikzstyle{ver}=[]
\tikzstyle{verti}=[circle, draw, fill=black!100, inner sep=0pt, minimum width=3pt]
\tikzstyle{vert}=[circle, draw, fill=black!100, inner sep=0pt, minimum width=1pt]
\tikzstyle{edge} = [draw,thick,-]
    \centering

\begin{tikzpicture}[scale=0.4]

\begin{scope}[shift={(-16,16)}]
\foreach \x/\y/\z in
{1.5/1.5/1,1.5/-1.5/2,-1.5/-1.5/3,-1.5/1.5/4,3/3/5,3/-3/6,-3/-3/7,-3/3/8,4.5/4.5/9,4.5/-4.5/10,-4.5/-4.5/11,-4.5/4.5/12,6/6/13,6/-6/14,-6/-6/15,-6/6/16,7.5/6/17,7.5/-6/18,-7.5/-6/19,-7.5/6/20}
{\node[verti] (a\z) at (\x,\y){};}

\foreach \x/\y/\z in
{2.6/3.5/4,2.6/-3.5/3,-2.6/-3.5/13,-2.6/3.5/14,4.1/5/24,4.1/-5/23,-4.1/-5/123,-4.1/5/124,5.6/6.5/2,5.6/-6.5/234,-5.6/-6.5/1234,-5.6/6.5/12}
{\node[ver] () at (\x,\y){\tiny{$T_{\z}^1$}};}

\foreach \x/\y/\z in
{1.1/2/0,1.1/-2/34,-1.1/-2/134,-1.1/2/1}
{\node[ver] () at (\x,\y){\tiny{$T_{\z}^4$}};}

\foreach \x/\y/\z in
{8.15/6/0,8.1/-6/34,-8.3/-6/134,-8.1/6/1}
{\node[ver] () at (\x,\y){\tiny{$T_{\z}^3$}};}

\foreach \x/\y in 
{5/9,6/10,7/11,8/12,13/17,14/18,15/19,16/20}
{\path[edge,dashed] (a\x)--(a\y);}

\foreach \x/\y in 
{1/4,2/3,6/7,8/5,10/11,12/9,14/15,16/13}
{\draw [line width=2pt, line cap=round, dash pattern=on 0pt off 1.7\pgflinewidth]  (a\x) -- (a\y);}

\foreach \x/\y in 
{6/5,8/7,10/9,12/11,14/13,16/15,17/18,19/20}
{\draw[line width=2pt, line cap=rectangle, dash pattern=on 1pt off 1]
(a\x) -- (a\y);}

\foreach \x/\y in 
{5/1,6/2,7/3,8/4,9/13,10/14,11/15,12/16}
{\path[edge,dotted] (a\x)--(a\y);}

 \end{scope}

\begin{scope}[shift={(-4.5,16)}]
\foreach \x/\y/\z in
{-1/1/1,1/1/2,-1/0/3,1/0/4,-1/-1/5,1/-1/6}
{\node[vert] (\z) at (\x,\y){};}

\foreach \x/\y/\z in
{-1.5/1.5/1,1.5/1.5/2,-1.5/0/3,1.5/0/4,-1.5/-1.5/5,1.5/-1.5/6}
{\node[ver] () at (\x,\y){\tiny{$T_w^{\z}$}};}

\foreach \x/\y in
{1/2,5/6}
{\path[edge] (\x)--(\y);}

\foreach \x/\y in
{4/2,5/3}
{\path[edge,dotted] (\x)--(\y);}

\foreach \x/\y in
{3/2,5/4}
{\path[edge,dashed] (\x)--(\y);}

\end{scope}

\begin{scope}[shift={(4,16)}]

\begin{scope}[shift={(4,-2)}]
\foreach \x/\y/\z in
{1.5/1.5/1,1.5/-1.5/2,-1.5/-1.5/3,-1.5/1.5/4}
{\node[verti] (b2\z) at (\x,\y){};}

\foreach \x/\y/\z in
{1/2/4,1.1/-2/3}
{\node[ver] () at (\x,\y){\tiny{$T_{\z}^2$}};}

\foreach \x/\y/\z in
{-1.1/-2/1,-1.1/2/134}
{\node[ver] () at (\x,\y){\tiny{$T_{\z}^3$}};}

\foreach \x/\y in 
{1/4,2/3}
{\draw [line width=2pt, line cap=round, dash pattern=on 0pt off 1.7\pgflinewidth]  (b2\x) -- (b2\y);}

\foreach \x/\y in 
{1/2,4/3}
{\draw[line width=2pt, line cap=rectangle, dash pattern=on 1pt off 1]
(b2\x) -- (b2\y);}
\end{scope}

\begin{scope}[shift={(-4,-2)}]
\foreach \x/\y/\z in
{1.5/1.5/1,1.5/-1.5/2,-1.5/-1.5/3,-1.5/1.5/4}
{\node[verti] (b3\z) at (\x,\y){};}

\foreach \x/\y/\z in
{-1.1/-2/123,-1.1/2/124}
{\node[ver] () at (\x,\y){\tiny{$T_{\z}^2$}};}

\foreach \x/\y/\z in
{1.1/2/234,1.1/-2/2}
{\node[ver] () at (\x,\y){\tiny{$T_{\z}^3$}};}

\foreach \x/\y in 
{1/4,2/3}
{\draw [line width=2pt, line cap=round, dash pattern=on 0pt off 1.7\pgflinewidth]  (b3\x) -- (b3\y);}

\foreach \x/\y in 
{1/2,4/3}
{\draw[line width=2pt, line cap=rectangle, dash pattern=on 1pt off 1]
(b3\x) -- (b3\y);}
\end{scope}

\begin{scope}[shift={(-4,4)}]
\foreach \x/\y/\z in
{1.5/1.5/1,1.5/-1.5/2,-1.5/-1.5/3,-1.5/1.5/4}
{\node[verti] (b4\z) at (\x,\y){};}

\foreach \x/\y/\z in
{1/2/2,1.1/-2/234,-1.1/-2/1234,-1.1/2/12}
{\node[ver] () at (\x,\y){\tiny{$T_{\z}^2$}};}

\foreach \x/\y in 
{1/4,2/3}
{\draw [line width=2pt, line cap=round, dash pattern=on 0pt off 1.7\pgflinewidth]  (b4\x) -- (b4\y);}

\foreach \x/\y in 
{1/2,4/3}
{\draw[line width=2pt, line cap=rectangle, dash pattern=on 1pt off 1]
(b4\x) -- (b4\y);}
\end{scope}

\end{scope}

\begin{scope}[shift={(-6,5)}]
\begin{scope}[shift={(-15,0)}]
\foreach \x/\y/\z in
{1.5/1.5/1,1.5/-1.5/2,-1.5/-1.5/3,-1.5/1.5/4,3/3/5,3/-3/6,-3/-3/7,-3/3/8}
{\node[verti] (c1\z) at (\x,\y){};}

\foreach \x/\y/\z in
{-1.1/-2/134,-1.1/2/1,2.6/3.5/123,2.6/-3.5/124,-2.6/-3.5/1234,-2.6/3.5/12}
{\node[ver] () at (\x,\y){\tiny{$T_{\z}^3$}};}

\foreach \x/\y/\z in
{1.1/2/13,1.1/-2/14}
{\node[ver] () at (\x,\y){\tiny{$T_{\z}^1$}};}

\foreach \x/\y in 
{6/7,8/5}
{\draw [line width=2pt, line cap=round, dash pattern=on 0pt off 1.7\pgflinewidth]  (c1\x) -- (c1\y);}

\foreach \x/\y in 
{1/2,4/3,6/5,8/7}
{\draw[line width=2pt, line cap=rectangle, dash pattern=on 1pt off 1]
(c1\x) -- (c1\y);}

\foreach \x/\y in 
{5/1,6/2,7/3,8/4}
{\path[edge] (c1\x)--(c1\y);}

\end{scope}

\begin{scope}[shift={(-5,0)}]
\foreach \x/\y/\z in
{1.5/1.5/1,1.5/-1.5/2,-1.5/-1.5/3,-1.5/1.5/4,3/3/5,3/-3/6,-3/-3/7,-3/3/8}
{\node[verti] (c2\z) at (\x,\y){};}

\foreach \x/\y/\z in
{-1.1/-2/234,-1.1/2/2,2.6/3.5/3,2.6/-3.5/4,-2.6/-3.5/34,-2.6/3.5/0}
{\node[ver] () at (\x,\y){\tiny{$T_{\z}^3$}};}

\foreach \x/\y/\z in
{1.1/2/23,1.1/-2/24}
{\node[ver] () at (\x,\y){\tiny{$T_{\z}^1$}};}

\foreach \x/\y in 
{6/7,8/5}
{\draw [line width=2pt, line cap=round, dash pattern=on 0pt off 1.7\pgflinewidth]  (c2\x) -- (c2\y);}

\foreach \x/\y in 
{1/2,4/3,6/5,8/7}
{\draw[line width=2pt, line cap=rectangle, dash pattern=on 1pt off 1]
(c2\x) -- (c2\y);}

\foreach \x/\y in 
{5/1,6/2,7/3,8/4}
{\path[edge] (c2\x)--(c2\y);}

\end{scope}

\begin{scope}[shift={(5,0)}]
\foreach \x/\y/\z in
{1.5/1.5/1,1.5/-1.5/2,-1.5/-1.5/3,-1.5/1.5/4,3/3/5,3/-3/6,-3/-3/7,-3/3/8}
{\node[verti] (d1\z) at (\x,\y){};}

\foreach \x/\y/\z in
{1.1/2/0,1.1/-2/12,-1.1/-2/2,-1.1/2/1,2.6/3.5/4,2.6/-3.5/124}
{\node[ver] () at (\x,\y){\tiny{$T_{\z}^4$}};}

\foreach \x/\y/\z in
{-2.6/-3.5/24,-2.6/3.5/14}
{\node[ver] () at (\x,\y){\tiny{$T_{\z}^6$}};}

\foreach \x/\y in 
{1/4,2/3}
{\draw [line width=2pt, line cap=round, dash pattern=on 0pt off 1.7\pgflinewidth]  (d1\x) -- (d1\y);}

\foreach \x/\y in 
{1/2,4/3,6/5,8/7}
{\draw[line width=2pt, line cap=rectangle, dash pattern=on 1pt off 1]
(d1\x) -- (d1\y);}

\foreach \x/\y in 
{5/1,6/2,7/3,8/4}
{\path[edge] (d1\x)--(d1\y);}

\end{scope}

\begin{scope}[shift={(15,0)}]
\foreach \x/\y/\z in
{1.5/1.5/1,1.5/-1.5/2,-1.5/-1.5/3,-1.5/1.5/4,3/3/5,3/-3/6,-3/-3/7,-3/3/8}
{\node[verti] (d2\z) at (\x,\y){};}

\foreach \x/\y/\z in
{1/1.9/3,1.1/-2/123,2.6/3.5/34,2.6/-3.5/1234,-2.6/-3.5/234,-2.6/3.5/134}
{\node[ver] () at (\x,\y){\tiny{$T_{\z}^4$}};}

\foreach \x/\y/\z in
{-1.1/-2/23,-1.1/2/13}
{\node[ver] () at (\x,\y){\tiny{$T_{\z}^6$}};}

\foreach \x/\y in 
{6/7,8/5}
{\draw [line width=2pt, line cap=round, dash pattern=on 0pt off 1.7\pgflinewidth]  (d2\x) -- (d2\y);}

\foreach \x/\y in 
{1/2,4/3,6/5,8/7}
{\draw[line width=2pt, line cap=rectangle, dash pattern=on 1pt off 1]
(d2\x) -- (d2\y);}

\foreach \x/\y in 
{5/1,6/2,7/3,8/4}
{\path[edge] (d2\x)--(d2\y);}

\end{scope}

\end{scope}

\begin{scope}[shift={(-16,-7)}]

\begin{scope}[shift={(4,4)}]
\foreach \x/\y/\z in
{1.5/1.5/1,1.5/-1.5/2,-1.5/-1.5/3,-1.5/1.5/4}
{\node[verti] (e1\z) at (\x,\y){};}

\foreach \x/\y/\z in
{-1.1/-2/2,-1.1/2/1}
{\node[ver] () at (\x,\y){\tiny{$T_{\z}^5$}};}

\foreach \x/\y/\z in
{1/2/3,1.1/-2/123}
{\node[ver] () at (\x,\y){\tiny{$T_{\z}^4$}};}

\foreach \x/\y in 
{1/4,2/3}
{\draw [line width=2pt, line cap=round, dash pattern=on 0pt off 1.7\pgflinewidth]  (e1\x) -- (e1\y);}

\foreach \x/\y in 
{1/2,4/3}
{\draw[line width=2pt, line cap=rectangle, dash pattern=on 1pt off 1]
(e1\x) -- (e1\y);}

\end{scope}

\begin{scope}[shift={(-4,-2)}]
\foreach \x/\y/\z in
{1.5/1.5/1,1.5/-1.5/2,-1.5/-1.5/3,-1.5/1.5/4}
{\node[verti] (e3\z) at (\x,\y){};}

\foreach \x/\y/\z in
{1.1/2/34,1.1/-2/1234,-1.1/-2/124,-1.1/2/4}
{\node[ver] () at (\x,\y){\tiny{$T_{\z}^5$}};}

\foreach \x/\y in 
{1/4,2/3}
{\draw [line width=2pt, line cap=round, dash pattern=on 0pt off 1.7\pgflinewidth]  (e3\x) -- (e3\y);}

\foreach \x/\y in 
{1/2,4/3}
{\draw[line width=2pt, line cap=rectangle, dash pattern=on 1pt off 1]
(e3\x) -- (e3\y);}
\end{scope}

\begin{scope}[shift={(-4,4)}]
\foreach \x/\y/\z in
{1.5/1.5/1,1.5/-1.5/2,-1.5/-1.5/3,-1.5/1.5/4}
{\node[verti] (e4\z) at (\x,\y){};}

\foreach \x/\y/\z in
{-1.1/-2/134,-1.1/2/234}
{\node[ver] () at (\x,\y){\tiny{$T_{\z}^5$}};}

\foreach \x/\y/\z in
{1.1/2/124,1.1/-2/4}
{\node[ver] () at (\x,\y){\tiny{$T_{\z}^4$}};}

\foreach \x/\y in 
{1/4,2/3}
{\draw [line width=2pt, line cap=round, dash pattern=on 0pt off 1.7\pgflinewidth]  (e4\x) -- (e4\y);}

\foreach \x/\y in 
{1/2,4/3}
{\draw[line width=2pt, line cap=rectangle, dash pattern=on 1pt off 1]
(e4\x) -- (e4\y);}
\end{scope}

\end{scope}

\begin{scope}[shift={(5,-7.5)}]
\foreach \x/\y/\z in
{1.5/1.5/1,1.5/-1.5/2,-1.5/-1.5/3,-1.5/1.5/4,3/3/5,3/-3/6,-3/-3/7,-3/3/8,6/6/9,6/-6/10,-6/-6/11,-6/6/12,7.5/7.5/13,7.5/-7.5/14,-7.5/-7.5/15,-7.5/7.5/16,4.5/4.5/17,4.5/-4.5/18,-4.5/-4.5/19,-4.5/4.5/20}
{\node[verti] (f\z) at (\x,\y){};}

\foreach \x/\y/\z in
{1.1/2/13,1.1/-2/23,-1.1/-2/2,-1.1/2/1,5.6/6.5/1234,5.6/-6.5/34,-5.6/-6.5/4,-5.6/6.5/124,7.1/8/134,7.1/-8/234,-7.1/-8/24,-7.1/8/14}
{\node[ver] () at (\x,\y){\tiny{$T_{\z}^6$}};}

\foreach \x/\y/\z in
{2.6/3.5/123,2.6/-3.5/3,-2.6/-3.5/0,-2.6/3.5/12}
{\node[ver] () at (\x,\y){\tiny{$T_{\z}^3$}};}

\foreach \x/\y/\z in
{4.1/5/123,4.1/-5/3,-4.1/-5/0,-4.1/5/12}
{\node[ver] () at (\x,\y){\tiny{$T_{\z}^4$}};}

\foreach \x/\y in 
{17/9,18/10,19/11,20/12}
{\path[edge,dashed] (f\x)--(f\y);}

\foreach \x/\y in 
{1/4,2/3,6/7,8/5,10/11,12/9,14/15,16/13}
{\draw [line width=2pt, line cap=round, dash pattern=on 0pt off 1.7\pgflinewidth]  (f\x) -- (f\y);}

\foreach \x/\y in 
{1/2,4/3,17/18,19/20,10/9,12/11,14/13,16/15}
{\draw[line width=2pt, line cap=rectangle, dash pattern=on 1pt off 1]
(f\x) -- (f\y);}

\foreach \x/\y in 
{5/1,6/2,7/3,8/4,9/13,10/14,11/15,12/16}
{\path[edge,dotted] (f\x)--(f\y);}

\draw[edge,dashed] plot [smooth,tension=0.5] coordinates{(f1) (4.2,2.5) (f13)};

\draw[edge,dashed] plot [smooth,tension=0.5] coordinates{(f2) (4.2,-2.5) (f14)};

\draw[edge,dashed] plot [smooth,tension=0.5] coordinates{(f3) (-4.2,-2.5) (f15)};

\draw[edge,dashed] plot [smooth,tension=0.5] coordinates{(f4) (-4.2,2.5) (f16)};

\end{scope}

\end{tikzpicture}
\caption{The gem $\Gamma^2_1$ of $\mathbb{RP}^2 \times \mathbb{RP}^2$ with $64$ vertices.}\label{fig:Ex3}

\end{figure}

\begin{figure}[h!]
\tikzstyle{ver}=[]
\tikzstyle{verti}=[circle, draw, fill=black!100, inner sep=0pt, minimum width=3pt]
\tikzstyle{vert}=[circle, draw, fill=black!100, inner sep=0pt, minimum width=1pt]
\tikzstyle{edge} = [draw,thick,-]
    \centering

\begin{tikzpicture}[scale=0.4]

\begin{scope}[shift={(-16,16)}]
\foreach \x/\y/\z in
{1.5/1.5/1,1.5/-1.5/2,-1.5/-1.5/3,-1.5/1.5/4,3/3/5,3/-3/6,-3/-3/7,-3/3/8,4.5/4.5/9,4.5/-4.5/10,-4.5/-4.5/11,-4.5/4.5/12,6/6/13,6/-6/14,-6/-6/15,-6/6/16,7.5/6/17,7.5/-6/18,-7.5/-6/19,-7.5/6/20}
{\node[verti] (a\z) at (\x,\y){};}

\foreach \x/\y/\z in
{2.6/3.5/4,2.6/-3.5/3,-2.6/-3.5/13,-2.6/3.5/14,4.1/5/24,4.1/-5/23,-4.1/-5/123,-4.1/5/124,5.6/6.5/2,5.6/-6.5/234,-5.6/-6.5/1234,-5.6/6.5/12}
{\node[ver] () at (\x,\y){\tiny{$T_{\z}^1$}};}

\foreach \x/\y/\z in
{1.1/2/0,1.1/-2/34,-1.1/-2/134,-1.1/2/1}
{\node[ver] () at (\x,\y){\tiny{$T_{\z}^4$}};}

\foreach \x/\y/\z in
{8.15/6/0,8.1/-6/34,-8.3/-6/134,-8.1/6/1}
{\node[ver] () at (\x,\y){\tiny{$T_{\z}^3$}};}

\foreach \x/\y in 
{5/9,6/10,7/11,8/12,13/17,14/18,15/19,16/20}
{\path[edge,dashed] (a\x)--(a\y);}

\foreach \x/\y in 
{1/4,2/3,6/7,8/5,10/11,12/9,14/15,16/13}
{\draw [line width=2pt, line cap=round, dash pattern=on 0pt off 1.7\pgflinewidth]  (a\x) -- (a\y);}

\foreach \x/\y in 
{6/5,8/7,10/9,12/11,14/13,16/15,17/18,19/20}
{\draw[line width=2pt, line cap=rectangle, dash pattern=on 1pt off 1]
(a\x) -- (a\y);}

\foreach \x/\y in 
{5/1,6/2,7/3,8/4,9/13,10/14,11/15,12/16}
{\path[edge,dotted] (a\x)--(a\y);}

 \end{scope}

\begin{scope}[shift={(-4.5,16)}]
\foreach \x/\y/\z in
{-1/1/1,1/1/2,-1/0/3,1/0/4,-1/-1/5,1/-1/6}
{\node[vert] (\z) at (\x,\y){};}

\foreach \x/\y/\z in
{-1.5/1.5/1,1.5/1.5/2,-1.5/0/3,1.5/0/4,-1.5/-1.5/5,1.5/-1.5/6}
{\node[ver] () at (\x,\y){\tiny{$T_w^{\z}$}};}

\foreach \x/\y in
{1/2,5/6}
{\path[edge] (\x)--(\y);}

\foreach \x/\y in
{4/2,5/3}
{\path[edge,dotted] (\x)--(\y);}

\foreach \x/\y in
{3/2,5/4}
{\path[edge,dashed] (\x)--(\y);}

\end{scope}

\begin{scope}[shift={(4,16)}]

\begin{scope}[shift={(4,-2)}]
\foreach \x/\y/\z in
{1.5/1.5/1,1.5/-1.5/2,-1.5/-1.5/3,-1.5/1.5/4}
{\node[verti] (b2\z) at (\x,\y){};}

\foreach \x/\y/\z in
{1/2/4,1.1/-2/3}
{\node[ver] () at (\x,\y){\tiny{$T_{\z}^2$}};}

\foreach \x/\y/\z in
{-1.1/-2/1,-1.1/2/134}
{\node[ver] () at (\x,\y){\tiny{$T_{\z}^3$}};}

\foreach \x/\y in 
{1/4,2/3}
{\draw [line width=2pt, line cap=round, dash pattern=on 0pt off 1.7\pgflinewidth]  (b2\x) -- (b2\y);}

\foreach \x/\y in 
{1/2,4/3}
{\draw[line width=2pt, line cap=rectangle, dash pattern=on 1pt off 1]
(b2\x) -- (b2\y);}
\end{scope}

\begin{scope}[shift={(-4,-2)}]
\foreach \x/\y/\z in
{1.5/1.5/1,1.5/-1.5/2,-1.5/-1.5/3,-1.5/1.5/4}
{\node[verti] (b3\z) at (\x,\y){};}

\foreach \x/\y/\z in
{-1.1/-2/123}
{\node[ver] () at (\x,\y){\tiny{$T_{\z}^2$}};}

\foreach \x/\y/\z in
{-1.1/2/4}
{\node[ver] () at (\x,\y){\tiny{$T_{\z}^4$}};}

\foreach \x/\y/\z in
{1.1/-2/2}
{\node[ver] () at (\x,\y){\tiny{$T_{\z}^3$}};}

\foreach \x/\y/\z in
{1.1/2/134}
{\node[ver] () at (\x,\y){\tiny{$T_{\z}^5$}};}
\foreach \x/\y in 
{1/4,2/3}
{\draw [line width=2pt, line cap=round, dash pattern=on 0pt off 1.7\pgflinewidth]  (b3\x) -- (b3\y);}

\foreach \x/\y in 
{1/2,4/3}
{\draw[line width=2pt, line cap=rectangle, dash pattern=on 1pt off 1]
(b3\x) -- (b3\y);}
\end{scope}

\begin{scope}[shift={(-4,4)}]
\foreach \x/\y/\z in
{1.5/1.5/1,1.5/-1.5/2,-1.5/-1.5/3,-1.5/1.5/4}
{\node[verti] (b4\z) at (\x,\y){};}

\foreach \x/\y/\z in
{1/2/2,-1.1/2/12}
{\node[ver] () at (\x,\y){\tiny{$T_{\z}^2$}};}

\foreach \x/\y/\z in
{1.1/-2/134,-1.1/-2/34}
{\node[ver] () at (\x,\y){\tiny{$T_{\z}^4$}};}

\foreach \x/\y in 
{1/4,2/3}
{\draw [line width=2pt, line cap=round, dash pattern=on 0pt off 1.7\pgflinewidth]  (b4\x) -- (b4\y);}

\foreach \x/\y in 
{1/2,4/3}
{\draw[line width=2pt, line cap=rectangle, dash pattern=on 1pt off 1]
(b4\x) -- (b4\y);}
\end{scope}

\end{scope}

\begin{scope}[shift={(-6,5)}]
\begin{scope}[shift={(-15,0)}]
\foreach \x/\y/\z in
{1.5/1.5/1,1.5/-1.5/2,-1.5/-1.5/3,-1.5/1.5/4,3/3/5,3/-3/6,-3/-3/7,-3/3/8}
{\node[verti] (c1\z) at (\x,\y){};}

\foreach \x/\y/\z in
{-1.1/-2/134,-1.1/2/1,2.6/3.5/123,-2.6/3.5/12}
{\node[ver] () at (\x,\y){\tiny{$T_{\z}^3$}};}

\foreach \x/\y/\z in
{1.1/2/13,1.1/-2/14}
{\node[ver] () at (\x,\y){\tiny{$T_{\z}^1$}};}

\foreach \x/\y/\z in
{2.6/-3.5/124,-2.6/-3.5/1234}
{\node[ver] () at (\x,\y){\tiny{$T_{\z}^6$}};}

\foreach \x/\y in 
{6/7,8/5}
{\draw [line width=2pt, line cap=round, dash pattern=on 0pt off 1.7\pgflinewidth]  (c1\x) -- (c1\y);}

\foreach \x/\y in 
{1/2,4/3}
{\draw[line width=2pt, line cap=rectangle, dash pattern=on 1pt off 1]
(c1\x) -- (c1\y);}

\foreach \x/\y in 
{5/1,6/2,7/3,8/4}
{\path[edge] (c1\x)--(c1\y);}

\end{scope}

\begin{scope}[shift={(-5,0)}]
\foreach \x/\y/\z in
{1.5/1.5/1,1.5/-1.5/2,-1.5/-1.5/3,-1.5/1.5/4,3/3/5,3/-3/6,-3/-3/7,-3/3/8}
{\node[verti] (c2\z) at (\x,\y){};}

\foreach \x/\y/\z in
{-1.1/2/2,2.6/3.5/3,2.6/-3.5/4,-2.6/-3.5/34,-2.6/3.5/0}
{\node[ver] () at (\x,\y){\tiny{$T_{\z}^3$}};}

\foreach \x/\y/\z in
{-1.1/-2/234}
{\node[ver] () at (\x,\y){\tiny{$T_{\z}^6$}};}

\foreach \x/\y/\z in
{1.1/2/23,1.1/-2/24}
{\node[ver] () at (\x,\y){\tiny{$T_{\z}^1$}};}

\foreach \x/\y in 
{6/7,8/5}
{\draw [line width=2pt, line cap=round, dash pattern=on 0pt off 1.7\pgflinewidth]  (c2\x) -- (c2\y);}

\foreach \x/\y in 
{1/2,6/5,8/7}
{\draw[line width=2pt, line cap=rectangle, dash pattern=on 1pt off 1]
(c2\x) -- (c2\y);}

\foreach \x/\y in 
{5/1,6/2,7/3,8/4}
{\path[edge] (c2\x)--(c2\y);}

\end{scope}

\begin{scope}[shift={(5,0)}]
\foreach \x/\y/\z in
{1.5/1.5/1,1.5/-1.5/2,-1.5/-1.5/3,-1.5/1.5/4,3/3/5,3/-3/6,-3/-3/7,-3/3/8}
{\node[verti] (d1\z) at (\x,\y){};}

\foreach \x/\y/\z in
{1.1/2/0,1.1/-2/12,-1.1/-2/2,-1.1/2/1,2.6/3.5/4}
{\node[ver] () at (\x,\y){\tiny{$T_{\z}^4$}};}

\foreach \x/\y/\z in
{2.6/-3.5/124}
{\node[ver] () at (\x,\y){\tiny{$T_{\z}^1$}};}

\foreach \x/\y/\z in
{-2.6/-3.5/24,-2.6/3.5/14}
{\node[ver] () at (\x,\y){\tiny{$T_{\z}^6$}};}

\foreach \x/\y in 
{1/4,2/3}
{\draw [line width=2pt, line cap=round, dash pattern=on 0pt off 1.7\pgflinewidth]  (d1\x) -- (d1\y);}

\foreach \x/\y in 
{1/2,4/3,8/7}
{\draw[line width=2pt, line cap=rectangle, dash pattern=on 1pt off 1]
(d1\x) -- (d1\y);}

\foreach \x/\y in 
{5/1,6/2,7/3,8/4}
{\path[edge] (d1\x)--(d1\y);}

\end{scope}

\begin{scope}[shift={(15,0)}]
\foreach \x/\y/\z in
{1.5/1.5/1,1.5/-1.5/2,-1.5/-1.5/3,-1.5/1.5/4,3/3/5,3/-3/6,-3/-3/7,-3/3/8}
{\node[verti] (d2\z) at (\x,\y){};}

\foreach \x/\y/\z in
{1/1.9/3,1.1/-2/123,2.6/3.5/34,-2.6/3.5/134}
{\node[ver] () at (\x,\y){\tiny{$T_{\z}^4$}};}

\foreach \x/\y/\z in
{-1.1/-2/23,-1.1/2/13}
{\node[ver] () at (\x,\y){\tiny{$T_{\z}^6$}};}

\foreach \x/\y/\z in
{2.6/-3.5/1234,-2.6/-3.5/234}
{\node[ver] () at (\x,\y){\tiny{$T_{\z}^1$}};}

\foreach \x/\y in 
{6/7,8/5}
{\draw [line width=2pt, line cap=round, dash pattern=on 0pt off 1.7\pgflinewidth]  (d2\x) -- (d2\y);}

\foreach \x/\y in 
{1/2,4/3}
{\draw[line width=2pt, line cap=rectangle, dash pattern=on 1pt off 1]
(d2\x) -- (d2\y);}

\foreach \x/\y in 
{5/1,6/2,7/3,8/4}
{\path[edge] (d2\x)--(d2\y);}

\end{scope}

\end{scope}

\begin{scope}[shift={(-16,-7)}]

\begin{scope}[shift={(4,4)}]
\foreach \x/\y/\z in
{1.5/1.5/1,1.5/-1.5/2,-1.5/-1.5/3,-1.5/1.5/4}
{\node[verti] (e1\z) at (\x,\y){};}

\foreach \x/\y/\z in
{-1.1/-2/2,-1.1/2/1}
{\node[ver] () at (\x,\y){\tiny{$T_{\z}^5$}};}

\foreach \x/\y/\z in
{1/2/3,1.1/-2/123}
{\node[ver] () at (\x,\y){\tiny{$T_{\z}^4$}};}

\foreach \x/\y in 
{1/4,2/3}
{\draw [line width=2pt, line cap=round, dash pattern=on 0pt off 1.7\pgflinewidth]  (e1\x) -- (e1\y);}

\foreach \x/\y in 
{1/2,4/3}
{\draw[line width=2pt, line cap=rectangle, dash pattern=on 1pt off 1]
(e1\x) -- (e1\y);}

\end{scope}

\begin{scope}[shift={(-4,-2)}]
\foreach \x/\y/\z in
{1.5/1.5/1,1.5/-1.5/2,-1.5/-1.5/3,-1.5/1.5/4}
{\node[verti] (e3\z) at (\x,\y){};}

\foreach \x/\y/\z in
{1.1/2/34,-1.1/2/4}
{\node[ver] () at (\x,\y){\tiny{$T_{\z}^5$}};}

\foreach \x/\y/\z in
{1.1/-2/12,-1.1/-2/123}
{\node[ver] () at (\x,\y){\tiny{$T_{\z}^3$}};}

\foreach \x/\y in 
{1/4,2/3}
{\draw [line width=2pt, line cap=round, dash pattern=on 0pt off 1.7\pgflinewidth]  (e3\x) -- (e3\y);}

\foreach \x/\y in 
{1/2,4/3}
{\draw[line width=2pt, line cap=rectangle, dash pattern=on 1pt off 1]
(e3\x) -- (e3\y);}
\end{scope}
\end{scope}

\begin{scope}[shift={(5,-7.5)}]
\foreach \x/\y/\z in
{1.5/1.5/1,1.5/-1.5/2,-1.5/-1.5/3,-1.5/1.5/4,3/3/5,3/-3/6,-3/-3/7,-3/3/8,6/6/9,6/-6/10,-6/-6/11,-6/6/12,7.5/7.5/13,7.5/-7.5/14,-7.5/-7.5/15,-7.5/7.5/16,4.5/4.5/17,4.5/-4.5/18,-4.5/-4.5/19,-4.5/4.5/20}
{\node[verti] (f\z) at (\x,\y){};}

\foreach \x/\y/\z in
{1.1/2/13,1.1/-2/23,-1.1/-2/2,-1.1/2/1,5.6/6.5/1234,5.6/-6.5/34,-5.6/-6.5/4,-5.6/6.5/124,7.1/8/134,7.1/-8/234,-7.1/-8/24,-7.1/8/14}
{\node[ver] () at (\x,\y){\tiny{$T_{\z}^6$}};}

\foreach \x/\y/\z in
{2.6/3.5/123,2.6/-3.5/3,-2.6/-3.5/0,-2.6/3.5/12}
{\node[ver] () at (\x,\y){\tiny{$T_{\z}^3$}};}

\foreach \x/\y/\z in
{4.1/5/123,4.1/-5/3,-4.1/-5/0,-4.1/5/12}
{\node[ver] () at (\x,\y){\tiny{$T_{\z}^4$}};}

\foreach \x/\y in 
{17/9,18/10,19/11,20/12}
{\path[edge,dashed] (f\x)--(f\y);}

\foreach \x/\y in 
{1/4,2/3,6/7,8/5,10/11,12/9,14/15,16/13}
{\draw [line width=2pt, line cap=round, dash pattern=on 0pt off 1.7\pgflinewidth]  (f\x) -- (f\y);}

\foreach \x/\y in 
{1/2,4/3,17/18,19/20,10/9,12/11,14/13,16/15}
{\draw[line width=2pt, line cap=rectangle, dash pattern=on 1pt off 1]
(f\x) -- (f\y);}

\foreach \x/\y in 
{5/1,6/2,7/3,8/4,9/13,10/14,11/15,12/16}
{\path[edge,dotted] (f\x)--(f\y);}

\draw[edge,dashed] plot [smooth,tension=0.5] coordinates{(f1) (4.2,2.5) (f13)};

\draw[edge,dashed] plot [smooth,tension=0.5] coordinates{(f2) (4.2,-2.5) (f14)};

\draw[edge,dashed] plot [smooth,tension=0.5] coordinates{(f3) (-4.2,-2.5) (f15)};

\draw[edge,dashed] plot [smooth,tension=0.5] coordinates{(f4) (-4.2,2.5) (f16)};

\end{scope}

\end{tikzpicture}
\caption{The crystallization $\Gamma^\prime_1$ of $\mathbb{RP}^2 \times \mathbb{RP}^2$ with $52$ vertices.}\label{fig:Ex4}

\end{figure}
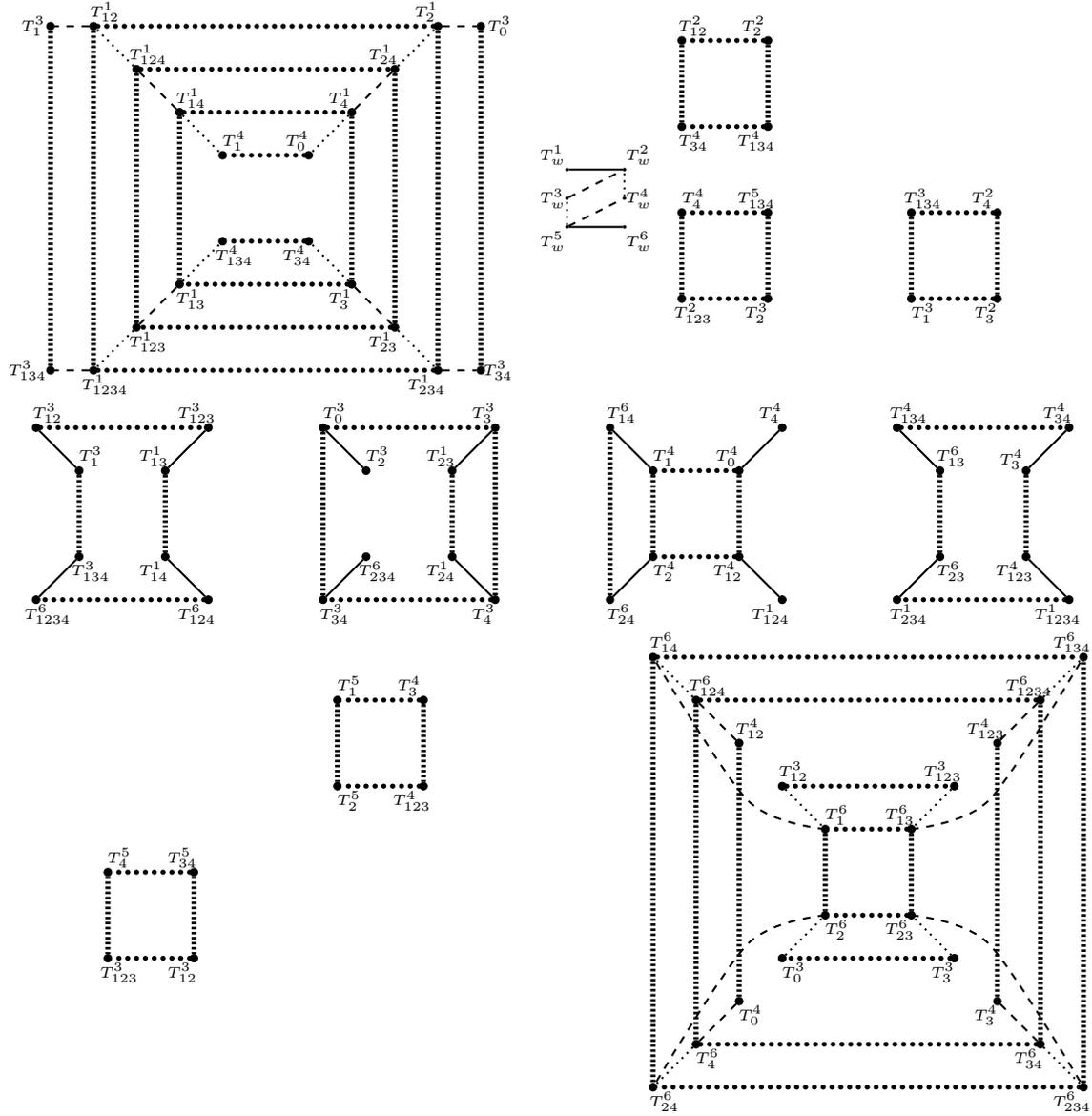

\begin{theorem}\label{thm:RP2}
The regular genus of each of the seven small covers over the simple polytope $P = \Delta^2 \times \Delta^2$ (up to D-J equivalence)  equals the minimum possible value, which is $8$. Furthermore, each of these small covers admits a genus-minimal crystallization with $52$ vertices.
\end{theorem}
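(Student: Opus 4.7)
The plan is to start from the $96$-vertex gem $\Gamma_i$ of $M^4(\lambda_i)$ constructed above and successively reduce it to a contracted crystallization with exactly $52$ vertices, after which the regular genus is read off directly from the bi-colored cycle counts and matched with the lower bound in Proposition \ref{lbrg}.

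First I would verify the lower bound. For each of the seven small covers $M^4(\lambda_i)$ one has $\chi(M^4(\lambda_i))=\chi(\mathbb{RP}^2)^2=1$ and $\pi_1$ surjects onto $\mathbb{Z}_2^2$, so $\mathrm{rk}(\pi_1)=2$; hence Proposition \ref{lbrg} gives $\mathcal G(M^4(\lambda_i))\ge 2\chi+5\cdot 2-4=8$. Thus it suffices to exhibit, for each $i$, a crystallization $\Gamma_i^\prime$ with $\rho(\Gamma_i^\prime)\le 8$; the ``$52$ vertices'' assertion will follow from the explicit reduction.

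Next I would carry out the reduction in stages, keeping the vertices in the compact $4\times 4$ blocks of $S_i$ (Figure \ref{fig:CF}) as a bookkeeping device. Since the subgraph $S_i$ is combinatorially isomorphic to $S_1$ for every $i$, the same sequence of moves can be applied across all seven cases, with only the $\mathbb{Z}_2$-characteristic-vector labels changing. I would proceed by: (i) identifying $1$-dipoles in $\Gamma_i$ supported on the ``boundary'' of the $t_w^1$ and $t_w^6$ blocks, where the $0$- and $4$-colored edges give $g_{\hat{0}}=g_{\hat{4}}=1$, and cancelling them to pass from $96$ to $80$ vertices (as illustrated for $i=1$ in Figure \ref{fig:Ex2}); (ii) cancelling further induced $2$-dipoles across the $t_w^2$--$t_w^3$ and $t_w^4$--$t_w^5$ interfaces, which the characteristic-vector structure permits because of the basis relations forced by Lemma \ref{lemma:seven}, reducing to the $64$-vertex intermediate gem of Figure \ref{fig:Ex3}; (iii) applying the composite move of Subsection \ref{move} to three pairs of twin $2$-dipoles whose induced $3$-dipoles also cancel, dropping exactly $12$ more vertices to reach the $52$-vertex contracted graph $\Gamma_i^\prime$ of Figure \ref{fig:Ex4}. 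At each stage I must check that the graph remains contracted (so cancelled dipoles have $h\in\{2,3\}$) and still represents $M^4(\lambda_i)$; the key observation is that the vectors $\{e_1,e_3,(c_1,c_2,1,1),(1,1,c_3,c_4)\}$ form a basis of $\mathbb Z_2^4$ regardless of the choices of $c_1,\dots,c_4$, so the local structure governing whether a pair of vertices constitutes a dipole is uniform across the seven cases.

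Finally I would compute the regular genus of $\Gamma_i^\prime$. Reading the bi-colored cycles off Figure \ref{fig:Ex4} and its analogues (the proof of Theorem \ref{theorem:G1} provides the template), I would tabulate $g_{\{j,k\}}$ for the ten color pairs and evaluate $\rho_\varepsilon(\Gamma_i^\prime)$ for the cyclic permutation $\varepsilon=(0,2,4,1,3)$ using
\[
\rho_\varepsilon(\Gamma_i^\prime)=1-\tfrac{1}{2}\!\Bigl(\tfrac{-3}{2}\cdot 52+g_{\{0,2\}}+g_{\{2,4\}}+g_{\{1,4\}}+g_{\{1,3\}}+g_{\{0,3\}}\Bigr).
\]
The target value is $\rho_\varepsilon(\Gamma_i^\prime)=8$, which requires the five relevant color-pair cycle counts to sum to $62$, i.e.\ an average of $12.4$ per pair; this is consistent with the explicit four-cycle structure of $S_i$ in the colors $\{0,1,3,4\}$. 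Combined with the lower bound, this yields $\mathcal G(M^4(\lambda_i))=8$ and in particular $\mathcal G(\mathbb{RP}^2\times\mathbb{RP}^2)=8$.

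The main obstacle I anticipate is \emph{uniformity across the seven cases}: one must show that the same sequence of dipole/glue moves remains valid when the labels $(c_1,c_2,c_3,c_4)$ vary over the six non-trivial choices allowed by Lemma \ref{lemma:seven}. The route I would take is to verify, once and for all, that the basis conditions used to detect dipoles depend only on the fact that $\{e_1,e_3,(c_1,c_2,1,1),(1,1,c_3,c_4)\}$ is always a basis—a fact that is already essentially built into the lemma—and that the cycle counts used in the genus computation depend only on the combinatorics of $S_i\simeq S_1$, not on the specific vectors assigned to its vertices.
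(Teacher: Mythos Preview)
Your high-level strategy is the paper's: bound below by Proposition~\ref{lbrg} with $\chi=1$, $m=2$, then reduce the $96$-vertex gem $\Gamma_i$ to a $52$-vertex crystallization and read off $\rho_\varepsilon=8$. But two concrete choices in your plan are wrong. First, the reduction $96\to80\to64\to52$ is not done by cancelling $1$-dipoles and then $2$-dipoles; it is four \emph{polyhedral glue moves}: two of color~$2$ on $\{0,4\}$-four-cycles (one through $T_0^1$, one through $T_0^6$), then one of color~$3$ on a pair of $\{0,1\}$-eight-cycles in $S$, then one of color~$1$ on the $\{3,4\}$-structure. The $T_w^1$ and $T_w^6$ vertices survive; it is adjacent $T_w^2$ and $T_w^5$ cycles that get absorbed. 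Second, the permutation is $\varepsilon=(0,3,2,1,4)$, not $(0,2,4,1,3)$; the counts in $\Gamma'$ that make the argument work are $g_{\{0,3\}}=g_{\{0,4\}}=g_{\{1,4\}}=g_{\{2,3\}}=13$ and $g_{\{1,2\}}=12$, summing to $64$ (not $62$: note $1-\tfrac12(-78+64)=8$). Your permutation would require $g_{\{0,2\}}+g_{\{2,4\}}+g_{\{1,3\}}=38$ in $\Gamma'$, which the paper never computes and which there is no reason to expect.

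Your proposed uniformity argument is also too optimistic. You hope the relevant cycle counts depend only on the isomorphism $S_i\simeq S_1$, but color~$2$ is absent from $S_i$, and the $\{1,2\}$- and $\{2,3\}$-cycle structures in $\Gamma_i'$ genuinely depend on $\lambda_i$: for instance the thirteen $\{2,3\}$-cycles are all four-cycles when $i\in\{1,2,5\}$, but split as nine four-cycles, two six-cycles and two two-cycles when $i\in\{3,4,6,7\}$. The total $g_{\{2,3\}}=13$ and $g_{\{1,2\}}=12$ are indeed constant across the seven cases, but establishing this requires the case analysis the paper carries out explicitly, tracking how the glue moves affect the named six-cycles $(a,b)^{U_k}$ and $(a,b)^{L_k}$; it does not follow from $S_i\simeq S_1$ alone.
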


\begin{proof}
    Let $\lambda:\mathcal F\to \mathbb{Z}_2^4$ be a $\mathbb{Z}_2$-characteristic function. From Lemma \ref{lemma:seven}, $\lambda=\lambda_i$ for some $1\le i\le 7$. Let $\Gamma$ ($=\Gamma_i$ for some $1\le i\le 7$) be the gem of $M^4(\lambda)$ obtained using the above construction. From Figure \ref{fig:T}, we have that $g_{\{0,2\}}=g_{\{0,3\}}=g_{\{0,4\}}=g_{\{1,3\}}=g_{\{1,4\}}=g_{\{2,4\}}=24$ and each such bi-colored cycle has length $4$. We also have that $g_{\{1,2\}}=g_{\{2,3\}}=16$ and each such bi-colored cycle has length $6$. Every $\{0,1\}$-colored cycle (resp. $\{3,4\}$-colored cycle) of the subgraph $S$ is of length $8$. Since remaining such bi-colored cycles are of length $4$, we have that $g_{\{0,1\}}=g_{\{3,4\}}=8+8=16$ of which $8$ are eight-cycles (resp. four-cycles). Again note that, in the gem $\Gamma$, we have that $g_{\hat{0}}=g_{\hat{4}}=1$, $g_{\hat{1}}=g_{\hat{3}}=2$ and $g_{\hat{2}}=3$.    
    
    From Figure \ref{fig:T}, we know that in $\Gamma$, if $T^1_{w_1}$ is connected to $T^1_{w_2}$ by an $i$-colored edge, then $T^2_{w_1}$ is connected to $T^2_{w_2}$ by an $i$-colored edge, where $i\in \{0,4\}$. It is also known that $T^1_{w}$ is connected to $T^2_{w}$ by a $2$-colored edge, for all $w\in W$. Thus, we can apply a polyhedral glue move with respect to $(\Phi_1,\Lambda_1,\Lambda_1^\prime,2)$, where $\Lambda_1$ consists of the vertices (of the form $T_w^1$) of the $\{0,4\}$-colored four-cycle containing $T_0^1$. If $T_w^1\in \Lambda_1$ for some $w\in W$, then $\Phi_1(T_w^1)=T_w^2$. Let the four-cycles generated by $\Lambda_1,\Lambda_1^\prime$ be $A_1,A_1^\prime$, respectively. Due to this polyhedral glue move, the $\{0,4\}$-colored four-cycle $A_1^\prime$ will be removed from $S$. Since $\{e_1,e_2,e_4,(c_1,c_2,1,1)\}$ is linearly independent (consider the vertex $v_2^0$), two distinct $\{0,4\}$-colored four-cycles, say $C_1$ and $C_1^\prime$, connected to $A_1$ by $3$- and $1$-colored edges in $\Gamma$, respectively, will replace $A_1^\prime$ in $S$. The cycles $C_1$ and $C_1^\prime$ are now incident with dangling $3$- and $1$-colored edges of $S$, respectively. Since vertices, from $C_1$ (resp. $C_1^\prime$), of the two induced $\{3,4\}$-colored cycles (resp. $\{0,1\}$-colored cycles) are not connected to another $\{3,4\}$-colored cycles (resp. $\{0,1\}$-colored cycles) of $S$ by $1$-colored (resp. $3$-colored) edges, we cannot apply polyhedral glue move involving color $1$ (resp. $3$) and these two induced $\{3,4\}$-colored cycles (resp. $\{0,1\}$-colored cycles). 

    Since $\{e_1,e_3,(1,1,c_3,c_4),(c_1,c_2,1,1)\}$ is linearly independent (consider the vertex $v_2^1$), each $\{0,4\}$-colored cycle, consisting of the vertices of the form $T_w^6$, has exactly one vertex $T_{w_0}^6$ such that $T_{w_0}^1\in \Lambda_1$. Let $A_2$ be the $\{0,4\}$-colored four-cycle containing $T_0^6$ and let $\Lambda_2$ be the set of vertices of $A_2$. Apply the second polyhedral glue move with respect to $(\Phi_2,\Lambda_2,\Lambda_2^\prime,2)$ (Figure \ref{fig:T}). Because $\{e_2,e_3,e_4,(1,1,c_3,c_4)\}$ is linearly independent (consider the vertex $v_2^2$), we again have a similar observation as above that we cannot apply polyhedral glue move involving color $1$ (resp. $3$) and the two induced (due to the second polyhedral glue move) $\{3,4\}$-colored cycles (resp. $\{0,1\}$-colored cycles). After these two polyhedral glue moves, we get the gem $\Gamma^1$ of $M^4(\lambda)$ with $80$ vertices, and we are left with the unique choice of polyhedral glue move involving the color $1$ (resp. $3$) and two $\{3,4\}$-colored (resp. $\{0,1\}$-colored) eight-cycles.
    
    The subscripts of the vertices of these two $\{3,4\}$-colored (resp. $\{0,1\}$-colored) eight-cycles are the entries from the third column (resp. fourth row) of the compact form of $S$ (Figure \ref{fig:CF}). Apply the polyhedral glue move with respect to $(\Phi_3,\Lambda_3,\Lambda_3^\prime,3)$ on $\Gamma^1$, where $\Lambda_3$ consists of the vertices of the form $T_w^j$, $j\in \{2,4\}$ and $w$ is an entry from the fourth row of the compact form of $S$. Due to this polyhedral glue move on $\Gamma^1$, we get the gem $\Gamma^2$ of $M^4(\lambda)$ with $64$ vertices, and all the $\{3,4\}$-colored eight-cycles of $S$ become six-cycles. Finally, applying the polyhedral glue move with respect to $(\Phi_4,\Lambda_4,\Lambda_4^\prime,1)$, where $\Lambda_4$ consists of the vertices of the form $T_w^j$, $j\in \{2,3\}$ and $w$ is an entry from the third column of the compact form of $S$, we get a crystallization $\Gamma^\prime$ of $M^4(\lambda)$ with $52$ vertices.
    
    Now, we will analyze bi-colored cycles. Let us first analyze $\{0,3\}$-colored cycles. In $\Gamma$, there are $24$ four-cycles colored by $\{0,3\}$. When applying the first polyhedral move, $2$ four-cycles colored by $\{0,3\}$ merge together, forming a four-cycle again. So, $2$ pairs of four-cycles result in $2$ four-cycles after the first polyhedral glue move. Similarly, after the second polyhedral glue move, $2$ pairs of four-cycles result in $2$ four-cycles. Thus, in $\Gamma^1$, there are $20$ four-cycles colored by $\{0,3\}$. In applying the third polyhedral glue move, $4$ four-cycles colored by $\{0,3\}$ are removed. Thus, in $\Gamma^2$, there are $16$ four-cycles colored by $\{0,3\}$. When applying the fourth polyhedral glue move, two four-cycles from each of the $3$ pairs of four-cycles get merged to form a four-cycles. Thus, in the crystallization $\Gamma^\prime$, there are $13$ four-cycles colored by $\{0,3\}$. Similarly, analyzing $\{0,4\}$-colored (resp. $\{1,4\}$-colored) cycles, we have that there are $13$ four-cycles colored by $\{0,4\}$ (resp. $\{1,4\}$) in $\Gamma^\prime$. 

    Let us now analyze $\{1,2\}$-colored (resp. $\{2,3\}$-colored) cycles. In the gem $\Gamma$, there are $16$ six-cycles colored by $\{1,2\}$ (resp. $\{2,3\}$). The vertices of a six-cycle colored by $\{1,2\}$ (resp. $\{2,3\}$) involves $3$ superscripts and $2$ subscripts. There are $8$ six-cycles colored by $\{1,2\}$ (resp. $\{2,3\}$) which involve superscripts $1,2,4$ (resp. $1,2,3$) and the other $8$ six cycles colored by $\{1,2\}$ (resp. $\{2,3\}$) involve superscripts $3,5,6$ (resp. $4,5,6$). Let $U_1=\{1,2,4\}$, $U_2=\{1,2,3\}$, $L_1=\{3,5,6\}$, and $L_2=\{4,5,6\}$. We denote a six-cycle, whose vertices involve subscripts $a,b$ and superscripts $c,d,e$, by $(a,b)^{\{c,d,e\}}$. Six-cycles colored by $\{1,2\}$ (resp. $\{2,3\}$) in $\Gamma$ are $(0,4)^{U_1},(1,14)^{U_1},(3,34)^{U_1},(13,134)^{U_1},(2,24)^{U_1}, (12,124)^{U_1},(23,234)^{U_1},(123,1234)^{U_1},(0,2)^{L_1},\\ (1,12)^{L_1},(3,23)^{L_1},(13,123)^{L_1}, (34,234)^{L_1},(134,1234)^{L_1}, (4,24)^{L_1},(14,124)^{L_1}$ (resp. $(0,2)^{U_2},\\ (34,234)^{U_2},(1,12)^{U_2},(134,1234)^{U_2},(3,23)^{U_2},(4,24)^{U_2},(13,123)^{U_2},(14,124)^{U_2},(0,4)^{L_2},\\ (3,34)^{L_2}, (12,124)^{L_2},(123,1234)^{L_2},(1,14)^{L_2},(13,134)^{L_2},(2,24)^{L_2},(23,234)^{L_2}$). Consider $\lambda=\lambda_1$. Due to the first and second glue moves, $8$ six-cycles $(0,4)^{U_1},(1,14)^{U_1},(3,34)^{U_1},(13,134)^{U_1},\\(0,2)^{L_1}, (1,12)^{L_1},(3,23)^{L_1},(13,123)^{L_1},$ (resp. $(0,2)^{U_2},(34,234)^{U_2},(1,12)^{U_2},(134,1234)^{U_2},\\(0,4)^{L_2},(3,34)^{L_2},(12,124)^{L_2},(123,1234)^{L_2}$) colored by $\{1,2\}$ (resp. $\{2,3\}$) in $\Gamma_1$ reduces to four-cycles in $\Gamma_1^1$. 
    
    Due to the third glue move, $8$ six-cycles $(13,123)^{U_2},(14,124)^{U_2},(4,24)^{U_2},(3,23)^{U_2},\\ (13,134)^{L_2},(1,14)^{L_2},(2,24)^{L_2},(23,234)^{L_2}$ colored by $\{2,3\}$ in $\Gamma_1^1$ reduces to four-cycles in $\Gamma_1^2$. Thus, there are $16$ four-cycles colored by $\{2,3\}$ in $\Gamma_1^2$. This is not the case, if we consider $\lambda=\lambda_3,\lambda_4,\lambda_6,$ or $\lambda_7$. Now, when applying the fourth glue move, $2$ four-cycles colored by $\{2,3\}$ merge together forming a four-cycle again. So, $3$ pairs of four-cycles colored by $\{2,3\}$ in $\Gamma_1^2$ result in $3$ four-cycles colored by $\{2,3\}$ in $\Gamma_1^\prime$. Hence, we have $13$ four-cycles colored by $\{2,3\}$ in the crystallization $\Gamma_1^\prime$. Similarly, analyzing $\{2,3\}$-colored cycles for $\lambda=\lambda_i$, where $2\le i\le 7$, we get that $g(\Gamma_{i_{\{2,3\}}}^\prime)=13$. There are $13$ four-cycles colored by $\{2,3\}$ in the crystallization $\Gamma_i^\prime$, for all $i\in \{1,2,5\}$, and $9$ four-cycles, $2$ six-cycles, $2$ two-cycles colored by $\{2,3\}$ in the crystallization $\Gamma_i^\prime$, for all $i\in \{3,4,6,7\}$. 

    In $\Gamma_1^1$, the $\{1,2\}$-colored cycles $(13,134)^{U_1}$ and $(13,123)^{L_1}$ are four-cycles, the $\{1,2\}$-colored cycles $(1,14)^{U_1}$ and $(14,124)^{L_1}$ are four-cycle and six-cycle, respectively, the $\{1,2\}$-colored cycles $(2,24)^{U_1}$ and $(4,24)^{L_1}$ are six-cycles, and the $\{1,2\}$-colored cycles $(23,234)^{U_1}$ and $(3,23)^{L_1}$ are six-cycle and four-cycle, respectively. When applying the third polyhedral glue move, these $\{1,2\}$-colored cycles merge together pairwise and result in four-cycle, six-cycle, eight-cycle, and six-cycle in $\Gamma_1^2$, respectively. So, there are $5$ four-cycles, $6$ six-cycles, and $1$ eight-cycle colored by $\{1,2\}$ in $\Gamma_1^2$. Since the $\{1,2\}$-colored cycles $(23,234)^{U_1}$ (induced by the third glue move), $(34,234)^{L_1},(123,1234)^{U_1},(134,1234)^{L_1}, (12,124)^{U_1},(14,124)^{L_1}$ (induced by the third glue move) are six-cycles in $\Gamma_1^2$, these cycles reduces to four-cycles, due to the fourth glue move, in the crystallization $\Gamma_1^\prime$. Hence, we have $1$ eight-cycle and $11$ four-cycles colored by $\{1,2\}$ in $\Gamma_1^\prime$. Similarly, analyzing $\{1,2\}$-colored cycles for $\lambda=\lambda_i$, where $2\le i\le 7$, we get that $g(\Gamma_{i_{\{1,2\}}}^\prime)=12$. There are $11$ four-cycles, $1$ eight-cycle colored by $\{1,2\}$ in the crystallization $\Gamma_i^\prime$, where $i\in \{1,3,6\}$, and $8$ four-cycles, $3$ six-cycles, $1$ two-cycle colored by $\{1,2\}$ in the crystallization $\Gamma_i^\prime$, where $i\in \{2,4,5,7\}$.

Let us compute the regular genus of $\Gamma^\prime$ with respect to the permutation $\varepsilon=(0,3,2,1,4)$ (cf. Subsection \ref{sec:genus}). $$\rho_{\varepsilon}(\Gamma^\prime)=1-\frac{1}{2}\left(\frac{-3}{2}(52)+g_{\{0,3\}}+g_{\{2,3\}}+g_{\{1,2\}}+g_{\{1,4\}}+g_{\{0,4\}}\right)$$ $$=1-\frac{1}{2}(-78+13+13+12+13+13)=8.$$ Thus, we have $\mathcal G(M^4(\lambda))\le \rho(\Gamma^\prime)\le \rho_{\varepsilon}(\Gamma^\prime)=8$. On the other hand, it is easy to verify that the rank of the fundamental group of $M^4(\lambda)$ is 2. Therefore, due to Proposition \ref{lbrg}, we have $\mathcal G(M^4(\lambda))\ge 8$. Thus, we get that $\mathcal G(M^4(\lambda))=\rho(\Gamma^\prime)=8$.
\end{proof}

Figures \ref{fig:Ex1}, \ref{fig:Ex2}, \ref{fig:Ex3} and \ref{fig:Ex4} represent gems $\Gamma_1,\Gamma_1^1, \Gamma_1^2$ and the crystallization $\Gamma_1^\prime$ of $\mathbb{RP}^2\times \mathbb{RP}^2$, respectively. The isomorphism signature of the crystallization $\Gamma_1^\prime$ of $\mathbb{RP}^2\times \mathbb{RP}^2$ obtained using Regina, is the following.

\noindent 0vvvvLLAALAwwzwvPPzLAMMAMAPQQMLPQAQPQQQQQQQQclimpjnrqprqxuDGEH\\FLGMzxHvBJOMLzTOECICJNEUIGXDVPRMWPQQZKYRLRSPXQXXWSVWTWUUZ\\VZYYZaaaaaaaaaaaaaaaaaaaaaaaaaaaaaaaaaaaaaaaaaaaaaaaaaaaaaaaaaaaaaaaaaaaaaaaaaa\\aaaaaaaaaaaaaaaaaaaaaaaaaaaaaaaaaaaaaaaaaaaaaaaaaaaaaaaaaaaaaaaaaaaaaaaaaaaaaaaaa\\aaa

\noindent We know that the small covers $M^4(\lambda_2)$ and $M^4(\lambda_5)$ are not D-J equivalent. Since their crystallizations $\Gamma_2^\prime$ and  $\Gamma_5^\prime$ have the same isomorphism signature, we have that $M^4(\lambda_2)$ and $M^4(\lambda_5)$ are PL homeomorphic. The isomorphism signature of the crystallizations $\Gamma_2^\prime$ and  $\Gamma_5^\prime$ is the following.

\noindent 0LvLvLLzvwMwwwAzMzzPMPLQLMLPMQPAQQMQQQQQQQQQcbfgknrsunlylBDAGC\\vHwHKIKNOICROPTSGMDJWEJIFXTVTEVPLQZLKYUTUSYSUXPVURQZQXYXZV\\YWWZaaaaaaaaaaaaaaaaaaaaaaaaaaaaaaaaaaaaaaaaaaaaaaaaaaaaaaaaaaaaaaaaaaaaaaaaaa\\aaaaaaaaaaaaaaaaaaaaaaaaaaaaaaaaaaaaaaaaaaaaaaaaaaaaaaaaaaaaaaaaaaaaaaaaaaaaaaaaa\\aaa

\noindent Since the crystallizations $\Gamma_3^\prime$ and  $\Gamma_6^\prime$ have the same isomorphism signature, we have that $M^4(\lambda_3)$ and $M^4(\lambda_6)$ are PL homeomorphic. The isomorphism signature of the crystallizations $\Gamma_3^\prime$ and  $\Gamma_6^\prime$ is the following.

\noindent 0LvLvMLzLLPvLLwMzzPMPLMzzQPwAQMQQMQQQQQPQQQQcbfggkmqrsmwxAzEBt\\FuFIGILMMNQPEKCHTCUHGDWQSQUVSNJJIXRQRPVXPRWNSROYZWXWYXTV\\TZYZYZaaaaaaaaaaaaaaaaaaaaaaaaaaaaaaaaaaaaaaaaaaaaaaaaaaaaaaaaaaaaaaaaaaaaaaaa\\aaaaaaaaaaaaaaaaaaaaaaaaaaaaaaaaaaaaaaaaaaaaaaaaaaaaaaaaaaaaaaaaaaaaaaaaaaaaaaaa\\aaaaaa

\noindent Since the crystallizations $\Gamma_4^\prime$ and  $\Gamma_7^\prime$ have the same isomorphism signature, we have that $M^4(\lambda_4)$ and $M^4(\lambda_7)$ are PL homeomorphic. The isomorphism signature of the crystallizations $\Gamma_4^\prime$ and  $\Gamma_7^\prime$ is the following.

\noindent 0LvLLzvzvwzwLAwzwvAQPwMPQQLQAMQPAQQPMQQQQQQQcbfijmrsvAABtuDwJH\\zKIxGEPCERQHFHDRSCBTAyGIVOWQKMXFLJLWSWXRPSYUTRTOSTZUYXZXZY\\WVYZaaaaaaaaaaaaaaaaaaaaaaaaaaaaaaaaaaaaaaaaaaaaaaaaaaaaaaaaaaaaaaaaaaaaaaaaaa\\aaaaaaaaaaaaaaaaaaaaaaaaaaaaaaaaaaaaaaaaaaaaaaaaaaaaaaaaaaaaaaaaaaaaaaaaaaaaaaaa\\aaaa

In summary, we have seven small covers over the simple polytope $P=\Delta^2 \times \Delta^2$ up to DJ equivalence, and among these, at most four are distinct up to PL homeomorphism. The regular genus of each of the small covers over $\Delta^2 \times \Delta^2$ is $8$.

\begin{remark}{\rm

The notion of weak semi-simple crystallizations for PL $4$-manifolds was introduced in \cite{bb18}. Let $M$ be a closed connected PL $4$-manifold and $m$ be the rank of the fundamental group of $M$. A crystallization $(\Gamma,\gamma)$ of $M$ is said to be a weak semi-simple crystallization with respect to the cyclic permutation $\varepsilon=(\varepsilon_0,\varepsilon_1,\cdots, \varepsilon_4)$ if $g_{\{\varepsilon_i,\varepsilon_{i+1},\varepsilon_{i+2}\}} = m + 1$ for all $i \in \Delta_4$ (addition in subscript of $\varepsilon$ is modulo $5$). The following result from \cite{bb18} provides a necessary and sufficient condition for a closed connected PL $4$-manifold such that its regular genus realizes the lower bound in Proposition \ref{lbrg}. Let $M$ be a closed connected PL $4$-manifold and $m$ be the rank of its fundamental group. Then $\mathcal G(M)= 2\chi(M)+5m-4$ if and only if $M$ admits a weak semi-simple crystallization.

Due to this result, we get that the crystallizations obtained in this article are weak semi-simple. In particular, the crystallizations $\Gamma_1^\prime,\ G_1^\prime,$ and $ G_2^\prime$ of $\mathbb{RP}^2 \times \mathbb{RP}^2 $, $\mathbb{S}^2 \times \mathbb{S}^1 \times \mathbb{S}^1$, and $\mathbb{S}^1 \times \mathbb{S}^1 \times \mathbb{S}^1 \times \mathbb{S}^1$ are weak semi-simple with respect to the cyclic permutations $(0,3,2,1,4),$ $(0,2,4,1,3)$, and $(0,2,4,1,3)$, respectively. 
}
\end{remark}

\bigskip

\noindent {\bf Acknowledgement:}  The first author is supported by the Institute fellowship by Indian Institute of Technology Delhi. The second author is supported by the Mathematical Research Impact Centric Support (MATRICS) Research Grant (MTR/2022/000036) by SERB (India).

\smallskip

\noindent {\bf Data availability:} The authors declare that all data supporting the findings of this study are available within the article.

\smallskip

\noindent {\bf Declarations}

\noindent {\bf Conflict of interest:} No potential conflict of interest was reported by the authors.

\end{document}